\numberwithin{equation}{section}
\def \al{\alpha}
\def \be{\beta}
\def \de{\delta}
\def \er{\varepsilon}
\def \ze{\zeta}
\def \ka{\varkappa}
\def \la{\lambda}
\def \si{\sigma}
\def \ph{\varphi}
\def \oo{\omega}
\def \G{\Gamma}
\def \D{\Delta}
\def \C{\mathbb{C}}
\def \N{\mathbb{N}}
\def \R{\mathbb{R}}
\def\n{\nabla}
\def\dd{\partial}
\def\div{\operatorname{div}}
\def\rot{\operatorname{rot}}
\def\1{1\!\!\!\!1}
\def\const{\operatorname{const}}
\def\dom{\operatorname{Dom}}
\def\lip{\operatorname{Lip}}
\def\ran{\operatorname{Ran}}
\newcommand{\<}{\langle}
\renewcommand{\>}{\rangle}
\theoremstyle{plain}
\newtheorem{theorem}{\bf Theorem}[section]
\newtheorem{lemma}[theorem]{\bf Lemma}
\newtheorem{cor}[theorem]{\bf Corollary}
\theoremstyle{definition}
\newtheorem{defi}[theorem]{\bf Definition}
\theoremstyle{remark}
\newtheorem{rem}[theorem]{\bf Remark}
\renewcommand{\le}{\leqslant}
\renewcommand{\ge}{\geqslant}
\renewcommand{\qed}{\vrule height7pt width5pt depth0pt}
\title{Maxwell operator in a cylinder.
Separation of variables}
\author{N.~D.~Filonov
\thanks{This work is supported by the project Russian Science Foundation 17-11-01069.}}
\date{}
\begin{document}
\maketitle

\begin{abstract}
The Maxwell operator in a 3D cylinder is considered.
The coefficients are assumed to be scalar functions depending on the longitudinal variable only.
Such operator is represented as a sum of countable set of matrix differential operators of first order
acting in $L_2(\R)$.
Based on this representation we give a detailed description of the structure of the spectrum
of the Maxwell operator in two particular cases:
1) in the case of coefficients stabilizing at infinity; and
2) in the case of periodic coefficients.
\end{abstract}

\section*{Introduction}
Let $U \subset \R^2$ be a bounded connected domain with Lipschitz boundary.
Let $\Pi = U \times \R$ be a three-dimensional cylinder.
We study the self-adjoint Maxwell operator ${\cal M}$ in the cylinder $\Pi$ 
under the boundary conditions of perfect conductivity.
It is a matrix differential operator of first order with coefficients $\er(x)$, $\mu(x)$, 
describing the dielectric and the magnetic permeabilities of the medium filling the cylinder.
The exact definition will be given below, see Definition \ref{d12}.
In this paper we assume that $\er$ and $\mu$ are scalar functions
(the medium in the cylinder is isotropic), bounded and positive definite,
\begin{equation}
\label{01}
0 < \er_0 \le \er(x) \le \er_1 , \qquad 
0 < \mu_0 \le \mu(x) \le \mu_1 ,
\end{equation} 
and that they depend on the longitudinal variable only,
\begin{equation}
\label{02}
\er (x)= \er(x_3), \qquad \mu (x) = \mu (x_3) .
\end{equation}

In this situation, we show that the Maxwell operator is an orthogonal sum of countable set
of matrix one-dimensional differential operators of first order in $L_2(\R)$ (see Theorem \ref{t31} below).
If the coefficients $\er$, $\mu$ possess the second Sobolev derivatives,
\begin{equation}
\label{03}
\er, \mu \in W_{1,loc}^2 (\R), \qquad
\text{and moreover}, \quad
\sup_{s\in\R} \int_s^{s+1} \left(|\er''(t)| + |\mu''(t)|\right) dt < \infty ,
\end{equation}
then the square ${\cal M}^2$ of the Maxwell operator is unitarily equivalent to the orthogonal sum
of countable set of Schr\" odinger operators 
$-d^2/dy^2 + V_k(y)$ on the real line (Theorem \ref{t47} below).
The Maxwell operator itself is unitarily equivalent to the orthogonal sum
of square roots of Schr\" odinger operators on the real line, taken with signs plus and minus 
(Corollary \ref{c48}).
Note, that the potentials $V_k$ can be explicitely calculated in terms of the coefficients $\er$, $\mu$,
and the eigenvalues $\la_k$, $\ka_l$ of the Laplace operator of the Dirichlet or Neumann boundary problems
in the cross-section $U$ (see \eqref{4e}, \eqref{4m}, \eqref{40} below).

In the precedent work \cite{F20} we have done partial separation of variables,
and we have shown that the square of the Maxwell operator
is unitarily equivalent to the orthogonal sum of four scalar elliptic operators of second order in the cylinder $\Pi$.
Now, we succeeded in complete separating of variables, and we reduce the problem 
to the set of one-dimensional Shr\" odinger operators
that are very well studied.
Note also that in \cite{F20} we used the boundedness of the inverse operator ${\cal M}^{-1}$
which needs the simple connectivity of the cross-section $U$.
In the present paper we do not need such condition.

This reduction to the Shr\" odinger operators allows us to describe the spectrum of the Maxwell operator
for different behaviour of the coefficients.
We consider two cases as natural examples.
In the first case, the coefficient stabilize at infinity,
$$
\er(x_3) \to \er_*, \qquad \mu(x_3) \to \mu_* \qquad \text{as} \quad x_3 \to \pm \infty,
$$
see Theorem \ref{t15} below.
In the second case, the coefficients are periodic along the axis of the cylinder,
see Theorem \ref{t16} below.

\section{Formulation of the results}
\label{S1}
\subsection{Functional spaces}
Let $U \subset \R^2$ be a bounded connected domain, $\dd U \in \lip$, $\Pi = U \times \R$.
We treat the boundary conditions in the definition of the Maxwell operator in the sense of integral identities.

\begin{defi}
\label{d11}
Let $u \in L_2 (\Pi, \C^3)$. 

$\nu$) If $\div u \in L_2 (\Pi)$ then
$$
\left. u_\nu\right|_{\dd\Pi} = 0 \qquad \Longleftrightarrow
\qquad 
\int_\Pi \<u, \n \oo\> dx = - \int_\Pi \div u\,\overline{\oo} dx \quad 
\forall \ \oo \in W_2^1(\Pi) .
$$

$\tau$) If $\rot u\in L_2 (\Pi, \C^3)$ then
$$
\left. u_\tau\right|_{\dd\Pi} = 0 \qquad \Longleftrightarrow
\qquad 
\int_\Pi \<u, \rot z\> dx = \int_\Pi \<\rot u, z\> dx \quad 
\forall \ z \in L_2(\Pi, \C^3) : \rot z \in L_2 (\Pi, \C^3) .
$$
\end{defi}

Here $\<\,.\,,\,.\,\>$ is a standard scalar product in $\C^3$.

We need also several functional spaces.
Introduce the Hilbert space
$$
H(\rot) =  \left\{ u \in L_2(\Pi,\C^3) : \rot u \in L_2(\Pi,\C^3)\right\}, 
$$
endowed with the norm
$$
\|u\|_{H(\rot)}^2 = \int_\Pi \left(|\rot u|^2 + |u|^2\right) dx,
$$
and its subspace
$$
H(\rot, \tau) =  \left\{ u \in H(\rot) : \left. u_\tau\right|_{\dd\Pi} = 0 \right\} .
$$
The following fact is a simple corollary of the definitions.

\begin{lemma}
\label{Htau}
The set $C_0^\infty (\Pi, \C^3)$ is dense in $H(\rot, \tau)$. 
\end{lemma}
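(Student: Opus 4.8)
The plan is to recognise that the integral identity used in Definition~\ref{d11}$\tau$ to define the vanishing of the tangential trace is precisely the abstract relation between a symmetric operator and its adjoint. I introduce in $L_2(\Pi,\C^3)$ the \emph{minimal} operator $R_0$, the closure of $\rot$ initially defined on $C_0^\infty(\Pi,\C^3)$, and the \emph{maximal} operator $R_1$, namely $\rot$ understood distributionally with domain $H(\rot)$. By construction the graph norm of $R_0$ coincides with $\|\cdot\|_{H(\rot)}$, so $\dom R_0 = \overline{C_0^\infty(\Pi,\C^3)}^{\,H(\rot)}$, and the lemma becomes equivalent to the identity $H(\rot,\tau)=\dom R_0$.

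First I would dispose of the easy inclusion. For $u\in C_0^\infty(\Pi,\C^3)$ the classical Green formula for $\rot$ carries no boundary contribution, so $\int_\Pi\<u,\rot z\>\,dx=\int_\Pi\<\rot u,z\>\,dx$ for every $z\in H(\rot)$; hence $C_0^\infty(\Pi,\C^3)\subset H(\rot,\tau)$. Since each constraint $u\mapsto\int_\Pi\<u,\rot z\>\,dx-\int_\Pi\<\rot u,z\>\,dx$ is continuous on $H(\rot)$, the subspace $H(\rot,\tau)$ is closed, and therefore $\overline{C_0^\infty}^{\,H(\rot)}\subseteq H(\rot,\tau)$.

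The substance lies in the reverse inclusion, obtained through adjoints. Taking complex conjugates, the defining identity of $H(\rot,\tau)$ reads $\int_\Pi\<\rot z,u\>\,dx=\int_\Pi\<z,\rot u\>\,dx$ for all $z\in H(\rot)=\dom R_1$, which is exactly the statement that $u\in\dom R_1^{*}$ with $R_1^{*}u=\rot u$; thus $H(\rot,\tau)=\dom R_1^{*}$. It then remains to compute $R_1^{*}$. Because $C_0^\infty(\Pi,\C^3)$ is a core for $R_0$, a field $v$ lies in $\dom R_0^{*}$ with $R_0^{*}v=w$ if and only if $\int_\Pi\<\rot\varphi,v\>\,dx=\int_\Pi\<\varphi,w\>\,dx$ for all $\varphi\in C_0^\infty$; as $\rot$ is formally self-adjoint this says precisely that $\rot v=w$ in the sense of distributions, i.e. $v\in H(\rot)$ and $w=\rot v$. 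Hence $R_0^{*}=R_1$, and taking adjoints once more gives $R_1^{*}=R_0^{**}=R_0$ since $R_0$ is closed and densely defined. Combining, $H(\rot,\tau)=\dom R_1^{*}=\dom R_0=\overline{C_0^\infty(\Pi,\C^3)}^{\,H(\rot)}$, which is the assertion.

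The only genuinely substantive point is the equality $R_0^{*}=R_1$; everything else is bookkeeping with adjoints. I expect this to be painless here, as it amounts to no more than the definition of the distributional $\rot$ tested against a core. The decisive advantage of routing the proof through operator theory is that it never touches $\dd\Pi$ directly and so is completely insensitive to the mere Lipschitz regularity of $\dd U$. A direct approximation scheme --- truncating in $x_3$ by a cut-off depending only on the longitudinal variable, then mollifying --- would instead demand a delicate treatment of the mollification near the non-smooth lateral boundary $\dd U\times\R$, and that is exactly the obstacle the abstract argument sidesteps.
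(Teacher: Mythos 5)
Your proof is correct and follows essentially the same route as the paper: the paper also introduces the minimal operator $\rot_0$ on $C_0^\infty(\Pi,\C^3)$, identifies its adjoint as the maximal operator $\rot$ on $H(\rot)$, and concludes that $\overline{\rot_0}=(\rot)^*$ is exactly $\rot$ restricted to $H(\rot,\tau)$ by Definition~\ref{d11}, which is your chain $H(\rot,\tau)=\dom R_1^{*}=\dom R_0^{**}=\dom \overline{R_0}$. Your write-up merely makes explicit the bookkeeping (the easy inclusion, the identity $R_0^{*}=R_1$ via distributional $\rot$, and $R_0^{**}=\overline{R_0}$) that the paper compresses into a few lines.
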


\begin{proof}
Consider the operator $\rot_0$ defined on $\dom \rot_0 = C_0^\infty(\Pi, \C^3)$.
Its adjoint operator is the operator $(\rot_0)^* = \rot$ defined on 
$\dom \rot = H(\rot)$.
Therefore, the closure of the operator $\rot_0$ is the operator $\overline{\rot_0} = (\rot)^*$
which acts also as the differential operation $\rot$ on the domain $H(\rot,\tau)$
(see Definition \ref{d11}).
Therefore, the set $C_0^\infty (\Pi, \C^3)$ is dense in $H(\rot, \tau)$ with respect to the graph norm.
\end{proof}

Now, let $\er$, $\mu$ satisfy \eqref{01}.
Introduce the subspaces of divergence-free fucntions 
$$
J(\er) = \{ u \in L_2 (\Pi, \C^3, \er) : \div(\er u) = 0 \},
$$
$$
J(\nu, \mu) = \{ v \in L_2 (\Pi, \C^3, \mu) : \div(\mu v) = 0, 
\left. (\mu v)_\nu\right|_{\dd\Pi} = 0 \}.
$$
Next, we introduce the spaces
$$
\Phi (\tau,\er) = H(\rot,\tau) \cap J(\er), \qquad \Phi (\nu,\mu) = H(\rot) \cap J(\nu,\mu) .
$$

\begin{defi}
\label{d12}
The Maxwell operator is defined in the Hilbert space ${\cal J} : = J(\er) \oplus J(\nu, \mu)$ via the formula
\begin{equation*}
{\cal M} 
\left( \begin{array}{cc} E \\ H \end{array} \right) =
\left( \begin{array}{cc}
i \er^{-1} \rot H \\ -i \mu^{-1} \rot E
\end{array} \right) 
\end{equation*}
on the domain
$$
\dom {\cal M} = \Phi (\tau,\er) \oplus \Phi (\nu,\mu).
$$
\end{defi}

It is easy to see that the Maxwell operator is self-adjoint, ${\cal M} = {\cal M}^*$,
see for example \cite{BS89}.
It has a block structure 
$$
{\cal M} = 
\left( \begin{array}{cc}
0 & R^* \\
R & 0 \end{array} \right).
$$
Here the operators
$$
R =  -i \mu^{-1} \rot, \qquad R^* = i \er^{-1} \rot
$$
defined on  
$$
\dom R =  \Phi (\tau,\er), \qquad \dom R^* = \Phi (\nu,\mu),
$$
are mutually adjoint.
This structure yields that the operators ${\cal M}$ and $-{\cal M}$ are unitarily equivalent,
and the spectrum of the operator ${\cal M}$ is symmetric with respect to zero
(see Lemma \ref{l32} below).

\subsection{Laplace operator in the cross-section}
Denote by $-\D_{\cal D}$ (resp. $-\D_{\cal N}$) the Laplace operator in $U$ with Dirichlet (resp. Neumann) boundary condition.
These are self-adjoint operators corresponding to the quadratic forms
$$
\int_U |\n \ph|^2 dx_1 dx_2, \qquad \ph \in \mathring W_2^1 (U)
$$
and
$$
\int_U |\n \psi|^2 dx_1 dx_2, \qquad \psi \in W_2^1 (U)
$$
respectively.
Cross-section $U$ is a bounded domain with Lipschitz boundary, 
so the spectra of both problems are discrete.
Denote by $\la_k$, $\ph_k$ (resp. $\ka_l$, $\psi_l$) the eigenvalues and the eigenfunctions of the Laplace operator
with Dirichlet (resp. Neumann) boundary condition,
$$
-\D_{\cal D} \ph_k = \la_k \ph_k, \qquad 
-\D_{\cal N} \psi_l = \ka_l \psi_l ,
$$
$$
0 < \la_1 < \la_2 \le \dots, \quad \la_k \to +\infty, 
\qquad
0 = \ka_1 < \ka_2 \le \dots, \quad \ka_l \to +\infty,
$$
$\psi_1 \equiv \const$.
We choose eigenfunctions to be orthonormal,
\begin{equation}
\label{basis}
\|\ph_k\|_{L_2(U)}^2 = 1,\quad \|\n\ph_k\|_{L_2(U)}^2 = \la_k, \qquad
\|\psi_l\|_{L_2(U)}^2 = 1,\quad \|\n\psi_l\|_{L_2(U)}^2 = \ka_l.
\end{equation}
If the boundary $\dd U$ is smooth then the eigenfunctions are classical solutions to the corresponding problems
$$
\begin{cases}
-\D \ph_k = \la_k \ph_k, \\
\left.\ph_k\right|_{\dd U} = 0,
\end{cases},
\qquad
\begin{cases}
-\D \psi_l = \ka_l \psi_l, \\
\left.\frac{\dd\psi_l}{\dd\nu}\right|_{\dd U} = 0.
\end{cases}
$$

\subsection{Results}
\begin{theorem}
\label{t13}
Let $U \subset \R^2$ be a bounded connected domain, $\dd U \in \lip$, $\Pi = U \times \R$.
Let the coefficients  $\er$, $\mu$ be scalar real measurable functions satisfying \eqref{01} and \eqref{02}.
Then the square ${\cal M}^2$ of the Maxwell operator is unitarily equivalent to the orthogonal sum
\begin{equation}
\label{11}
\left(\bigoplus_{k=1}^\infty A^{el}_k\right) \bigoplus \left(\bigoplus_{k=1}^\infty A^{el}_k\right) 
\bigoplus \left(\bigoplus_{l=2}^\infty A^m_l\right) \bigoplus \left(\bigoplus_{l=2}^\infty A^m_l\right)
\bigoplus \left(\bigoplus_{j=1}^{2N-2} A^0\right) ,
\end{equation}
and
\begin{equation}
\label{sp}
\si \left({\cal M}^2\right) = \left(\bigcup_{k=1}^\infty \si \left(A^{el}_k\right)\right)
\bigcup \left(\bigcup_{l=2}^\infty \si \left(A^m_l\right)\right)
\bigcup \si \left(A^0\right).
\end{equation}
Here
$$
A^{el}_k = - \frac1\mu \frac{d}{dz} \left(\frac1\er \frac{d}{dz}\right) + \frac{\la_k}{\er\mu}, 
\qquad \dom A^{el}_k = \left\{p\in W_2^1(\R) : (\er^{-1} p')' \in L_2(\R)\right\}
$$
and 
$$
A^0 = - \frac1\mu \frac{d}{dz} \left(\frac1\er \frac{d}{dz}\right),
\qquad \dom A^0 = \left\{p\in W_2^1(\R) : (\er^{-1} p')' \in L_2(\R)\right\}
$$
are self-adjoint operators in the space $L_2(\R, \mu dz)$;
$$
A^m_l = - \frac1\er \frac{d}{dz} \left(\frac1\mu \frac{d}{dz}\right) + \frac{\ka_l}{\er\mu}, 
\qquad \dom A^m_l = \left\{q\in W_2^1(\R) : (\mu^{-1} q')' \in L_2(\R)\right\}
$$
are self-adjoint operators in the space $L_2(\R, \er dz)$;
$N$ is the number of connected components of the boundary $\dd U$;
if the cross-section $U$ is simply connected, the last summand in \eqref{11} is absent.
\end{theorem}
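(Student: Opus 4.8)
\medskip\noindent\emph{Proof plan.}\quad The plan is to exploit the block structure of ${\cal M}$ recorded above. Since
$$
{\cal M}^2 = (R^*R)\oplus(RR^*),
$$
where $R^*R=\er^{-1}\rot(\mu^{-1}\rot\,\cdot\,)$ acts in the electric subspace $J(\er)$ and $RR^*=\mu^{-1}\rot(\er^{-1}\rot\,\cdot\,)$ acts in the magnetic subspace $J(\nu,\mu)$, it suffices to analyse these two nonnegative operators. Because $R$ intertwines them, $R(R^*R)=(RR^*)R$, their restrictions to the orthogonal complements of the respective kernels are unitarily equivalent; I will therefore compute each summand on whichever of the two sides is analytically cleaner and transport the result to its partner by the partial isometry in the polar decomposition of $R$. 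The substance of the argument is a complete separation of variables reducing $R^*R$ and $RR^*$ to a countable orthogonal sum of one-dimensional operators in $x_3$ (this is exactly the reduction that underlies Theorem \ref{t31}).

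First I would set up the transverse decomposition. Writing a field as $u=(u_\perp,u_3)$ with $u_\perp=(u_1,u_2)$, I use the Hodge--Helmholtz decomposition of $L_2(U,\C^2)$ adapted to the boundary conditions. The condition $\left.E_\tau\right|_{\dd\Pi}=0$ forces $E_3$ to vanish on $\dd U$ and $E_\perp$ to be normal to $\dd U$; accordingly $E_3$ is expanded in the Dirichlet basis $\{\ph_k\}$, while $E_\perp$ lies in the closed span of the normal fields $\{\n\ph_k\}$, $\{\n^\perp\psi_l\}$ together with the space ${\cal H}_{\cal D}$ of normal harmonic fields. Dually, $\left.(\mu H)_\nu\right|_{\dd\Pi}=0$ makes $H_\perp$ tangent to $\dd U$, so $H_3$ is expanded in the Neumann basis $\{\psi_l\}$ and $H_\perp$ in the tangent fields $\{\n^\perp\ph_k\}$, $\{\n\psi_l\}$ and the space ${\cal H}_{\cal N}$ of tangent harmonic fields. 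For a planar Lipschitz domain with $N$ boundary components, $\dim{\cal H}_{\cal D}=\dim{\cal H}_{\cal N}=N-1$, and the rotation $h\mapsto h^\perp$ maps ${\cal H}_{\cal D}$ onto ${\cal H}_{\cal N}$. Combining the longitudinal and transverse expansions yields three families of modes: TM modes generated by $\ph_k$ (Dirichlet), TE modes generated by $\psi_l$ (Neumann, hence $l\ge2$ since $\n^\perp\psi_1=0$), and TEM modes generated by the harmonic fields.

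Next I would impose the divergence constraints $\div(\er E)=0$, $\div(\mu H)=0$, which reduce each mode to a single scalar function on $\R$, and then evaluate $\rot$ on each mode. A direct computation shows that the corresponding subspaces reduce ${\cal M}^2$ and that the second-order operators act on the scalar coefficients as Sturm--Liouville operators, with weights fixed by the normalisation \eqref{basis}: the magnetic TM modes give $A^{el}_k$ in $L_2(\R,\mu\,dz)$; the electric TE modes give $A^m_l$ in $L_2(\R,\er\,dz)$; the magnetic TEM modes give $A^0$ in $L_2(\R,\mu\,dz)$. The intertwining by $R$ supplies the electric (resp. magnetic) partner of each, so each $A^{el}_k$ and each $A^m_l$ occurs twice, and the $N-1$ electric plus $N-1$ magnetic TEM modes together contribute $2N-2$ copies of $A^0$; this yields \eqref{11}, and \eqref{sp} follows from the orthogonal sum once each one-dimensional operator is checked self-adjoint on the stated domain (a standard limit-point analysis at $\pm\infty$, using the bounds \eqref{01}).

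The hard parts are the following. The structural heart is the Hodge--Helmholtz decomposition in the Lipschitz cross-section: the orthogonality and, above all, the \emph{completeness} of the three mode families, and the count $\dim{\cal H}_{\cal D}=\dim{\cal H}_{\cal N}=N-1$ of harmonic fields, which ties the last summand of \eqref{11} to the topology of $U$ and makes it vanish when $U$ is simply connected. One must also verify that this decomposition genuinely reduces the unbounded operators, with domains matching those prescribed for $A^{el}_k$, $A^m_l$, $A^0$. The subtlest bookkeeping is for the TEM modes: the electric side produces $-\er^{-1}(d/dz)(\mu^{-1}\,d/dz)$ in $L_2(\R,\er\,dz)$, which is \emph{not} literally $A^0$ but its supersymmetric partner; since both have trivial kernel (a solution of $(\mu^{-1}\eta')'=0$ cannot lie in $L_2(\R)$ under \eqref{01}), the intertwining by $R$ is a genuine unitary equivalence and collapses this contribution onto $A^0$ as well.
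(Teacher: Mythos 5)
Your proposal is correct, and it reaches the decomposition \eqref{11} by a route that is organized differently from the paper's. The computational core is the same in both: the transverse decomposition of $L_2(U,\C^2)$ into Dirichlet gradients, rotated Neumann gradients, and the $(N-1)$-dimensional space of harmonic fields (the paper's Lemma \ref{l21} and Corollary \ref{c22}), the divergence constraints (Lemma \ref{l23}), and the curl computations (Theorems \ref{t25} and \ref{t24}) --- exactly the items you flag as the ``hard parts.'' The difference is where the supersymmetry is exploited. You work with ${\cal M}^2=R^*R\oplus RR^*$ from the outset, keep the electric and magnetic subspaces separate, compute each one-dimensional operator on the unconstrained member of every mode pair (magnetic modes over $\ph_k$ give $A^{el}_k$, electric modes over $\psi_l$ give $A^m_l$, the two harmonic families give $A^0$ and its partner $-\er^{-1}\frac{d}{dz}(\mu^{-1}\frac{d}{dz})$), and transfer to the constrained partner via the unitary equivalence of $R^*R$ and $RR^*$ modulo kernels --- correctly noting that kernel-triviality must be verified, and verifying it in the only delicate case, the harmonic modes where the potential term vanishes. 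The paper instead first separates variables in the first-order operator ${\cal M}$ itself (Theorem \ref{t31}): its invariant subspaces ${\cal J}_k^{el}$, ${\cal J}_l^m$, ${\cal J}_j^0$ mix the electric and magnetic components, and ${\cal M}$ becomes an orthogonal sum of $3\times3$ and $2\times2$ first-order systems on $\R$; each block is then squared by resolving the divergence constraint through the explicit substitution $a=\la_k^{-1}\er^{-1}p'$, $c=\er^{-1}p$ and applying the abstract off-diagonal lemma (Lemma \ref{l32}) within the block. What each approach buys: yours is shorter if one only wants ${\cal M}^2$, and it replaces the constraint-resolving change of variables by an abstract intertwining; the paper's yields Theorem \ref{t31} (a decomposition of the Maxwell operator itself, of independent interest and used for Corollary \ref{c48}), and it gets the self-adjointness of $A^{el}_k$, $A^m_l$, $A^0$ on the stated domains for free, since they arise as $RR^*$ of closed operators, whereas you must supply this separately (your limit-point remark does the job). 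One small point, common to your write-up and the paper's: the equality \eqref{sp} uses that the countable union of spectra is already closed, which follows from the lower bounds \eqref{468}, \eqref{4681} together with $\la_k,\ka_l\to\infty$.
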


\begin{rem}
The operator $A^0$ coincides with the operator $A^{el}_k$ after the substitution $\la_k \mapsto 0$.
One can take the operator $A^m_l$ with substitution $\ka_l \mapsto 0$ instead of it.
These operators are unitarily equivalent (see Remark \ref{r48} below).
\end{rem}

\begin{cor}
\label{c14}
If the cross-section $U$ is simply connected then there is a gap centered at zero in the spectrum of the Maxwell operator,
$$
\si({\cal M}) \subset \left(-\infty, -\sqrt{\frac{\ka_2}{\|\er\mu\|_{L_\infty}}}\right] \cup 
\left[\sqrt{\frac{\ka_2}{\|\er\mu\|_{L_\infty}}}, +\infty\right) .
$$
\end{cor}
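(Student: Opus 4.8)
The plan is to read the gap directly off the spectral decomposition \eqref{sp} of $\mathcal M^2$, after transferring the question from $\mathcal M$ to $\mathcal M^2$. Since $\mathcal M$ is self-adjoint its spectrum is symmetric about zero and $\si(\mathcal M^2)=\{\lambda^2:\lambda\in\si(\mathcal M)\}$, so a symmetric gap $\si(\mathcal M)\cap(-c,c)=\varnothing$ is equivalent to the single scalar estimate $\inf\si(\mathcal M^2)\ge c^2$. Thus it suffices to prove $\inf\si(\mathcal M^2)\ge \ka_2/\|\er\mu\|_{L_\infty}$. Because $U$ is simply connected, the number of boundary components is $N=1$, hence $2N-2=0$ and the summand $\bigoplus_j A^0$ in \eqref{11} is absent. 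By \eqref{sp} it then remains to bound below the bottoms of the operators $A^{el}_k$ for $k\ge1$ and $A^m_l$ for $l\ge2$.

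The estimates for the bottoms come from the quadratic forms. For $q$ in the form domain of $A^m_l$, integration by parts in $L_2(\R,\er\,dz)$ gives
$$
\langle A^m_l q,q\rangle = \int_\R \frac{|q'|^2}{\mu}\,dz + \ka_l\int_\R\frac{|q|^2}{\mu}\,dz ,
\qquad \|q\|^2=\int_\R|q|^2\,\er\,dz .
$$
Dropping the nonnegative kinetic term and using the pointwise bound $\mu^{-1}=(\er\mu)^{-1}\er\ge\|\er\mu\|_{L_\infty}^{-1}\er$, I get the Rayleigh estimate
$$
\frac{\langle A^m_l q,q\rangle}{\|q\|^2}\ \ge\ \ka_l\,\frac{\int_\R|q|^2\mu^{-1}\,dz}{\int_\R|q|^2\er\,dz}\ \ge\ \frac{\ka_l}{\|\er\mu\|_{L_\infty}}\ \ge\ \frac{\ka_2}{\|\er\mu\|_{L_\infty}},
$$
the last step using $\ka_l\ge\ka_2$ for $l\ge2$; hence $\inf\si(A^m_l)\ge\ka_2/\|\er\mu\|_{L_\infty}$. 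The identical computation for $A^{el}_k$ in $L_2(\R,\mu\,dz)$, now with $\er^{-1}\ge\|\er\mu\|_{L_\infty}^{-1}\mu$, yields $\inf\si(A^{el}_k)\ge\la_k/\|\er\mu\|_{L_\infty}\ge\la_1/\|\er\mu\|_{L_\infty}$.

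The decisive step is then to compare the two lower bounds, i.e.\ to show $\la_1\ge\ka_2$: the first Dirichlet eigenvalue of $U$ must dominate the second Neumann eigenvalue. This is exactly the classical inequality $\ka_{k+1}\le\la_k$ between Neumann and Dirichlet eigenvalues of the Laplacian on a bounded Lipschitz domain (Friedlander, and with strict inequality for the Lipschitz case), specialized to $k=1$. Granting it, every summand in \eqref{sp} has bottom at least $\ka_2/\|\er\mu\|_{L_\infty}$, whence $\inf\si(\mathcal M^2)\ge\ka_2/\|\er\mu\|_{L_\infty}$ and the asserted gap for $\mathcal M$ follows. I expect the main obstacle to be not the (routine) form estimates but invoking the correct eigenvalue inequality $\ka_2\le\la_1$; I also note that simple connectivity is essential here, since otherwise the operators $A^0$, whose bottom is $0$, would fill the interval and destroy the gap.
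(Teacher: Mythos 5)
Your proof is correct and follows essentially the same route as the paper: the same form lower bounds $A^{el}_k \ge \la_k/\|\er\mu\|_{L_\infty}$ and $A^m_l \ge \ka_l/\|\er\mu\|_{L_\infty}$, the same use of simple connectivity to drop the $A^0$ summands, and the same invocation of the Friedlander-type inequality $\ka_2 < \la_1$ to identify $\ka_2/\|\er\mu\|_{L_\infty}$ as the common lower bound for $\si({\cal M}^2)$. Nothing is missing.
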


\begin{theorem}
\label{t15}
Let the conditions of Theorem \ref{t13} be fulfilled.
Assume moreover that there are two constants $\er_*$ and $\mu_*$ such that
$$
\er - \er_* \in W_1^2 (\R), \qquad \mu - \mu_* \in W_1^2(\R) .
$$
Then

1) there are no singular continuous component in the spectrum of the Maxwell oeprator,
$$
\si_{sc} ({\cal M}) = \emptyset .
$$

2) If the cross-section $U$ is multiply connected then the absolute continuous spectrum fills the whole real line,
$$
\si_{ac} ({\cal M}) = \R .
$$
If the cross-section $U$ is simply connected then
$$
\si_{ac} ({\cal M}) =  \left(-\infty, -\sqrt{\frac{\ka_2}{\er_*\mu_*}}\right] \cup 
\left[\sqrt{\frac{\ka_2}{\er_*\mu_*}}, +\infty\right) .
$$

3) If 
$$
\er (z) \mu(z) \le \er_* \mu_* \qquad \forall \ z \in \R,
$$
then there are no eigenvalues in the spectrum, 
$\si_p ({\cal M}) = \emptyset$.
If 
$$
\exists \ z_0 \in \R : \quad \er(z_0) \mu (z_0) > \er_* \mu_*,
$$
then the operator ${\cal M}$ has infinite (countable) set of eigenvalues.
In particular, there are infinitely many eigenvalues inside the continuous spectrum.
\end{theorem}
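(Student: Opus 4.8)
The plan is to push the entire analysis down to the one-dimensional operators produced by Theorem \ref{t13} and, through them, to the one-dimensional Schr\"odinger operators of Theorem \ref{t47}, whose spectral theory under short-range perturbations is classical. The first step is to observe that the stabilization hypothesis $\er-\er_*,\mu-\mu_*\in W_1^2(\R)$ is stronger than \eqref{03}: by Sobolev embedding $\er,\mu$ are continuous and tend to $\er_*,\mu_*$ at $\pm\infty$, and, crucially, the potentials $V$ of the Schr\"odinger operators equivalent to $A^{el}_k,A^m_l,A^0$ differ from their limits by functions in $L_1(\R)$. I expect this last point---verifying that the \emph{geometric} potential generated by the Liouville change of variables is genuinely short-range, which is exactly what the control of $\er'',\mu''$ in $W_1^2(\R)$ buys---to be the main technical obstacle; it is essentially the content of Theorem \ref{t47} together with the global integrability now assumed. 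The limiting (threshold) values are $c^{el}_k=\la_k/(\er_*\mu_*)$ for $A^{el}_k$, $c^m_l=\ka_l/(\er_*\mu_*)$ for $A^m_l$, and $0$ for $A^0$.

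For 1) and 2) I would argue block by block. By Weyl's theorem the essential spectrum of each block equals that of its constant-coefficient limit, namely $[c,\infty)$; and since the potential of each block is short-range, the classical one-dimensional scattering theory (e.g. trace-class Kato--Rosenblum, or Weidmann's criterion for $L_1$ potentials) gives that this spectrum is purely absolutely continuous. Thus every block has $\si_{sc}=\emptyset$ and $\si_{ac}=[c,\infty)$. Passing to the orthogonal sum \eqref{11}, the singular-continuous subspace of ${\cal M}^2$ is the orthogonal sum of the trivial singular-continuous subspaces of the blocks, so $\si_{sc}({\cal M}^2)=\emptyset$, while $\si_{ac}({\cal M}^2)$ is the closure of the union of the block absolutely continuous spectra. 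Using $\min_{k}c^{el}_k=\la_1/(\er_*\mu_*)$, $\min_{l\ge2}c^m_l=\ka_2/(\er_*\mu_*)$, the inequality $\ka_2\le\la_1$, and that the summand $A^0$ (threshold $0$) occurs precisely when $U$ is multiply connected, this union is $[0,\infty)$ in the multiply connected case and $[\ka_2/(\er_*\mu_*),\infty)$ in the simply connected case. Finally, ${\cal M}$ is unitarily equivalent to the orthogonal sum of the $\pm$ square roots of these blocks (Corollary \ref{c48}), and $x\mapsto\sqrt x$ is a smooth bijection of $(0,\infty)$ preserving absolutely continuous and singular-continuous subspaces; hence $\si_{sc}({\cal M})=\emptyset$ and $\si_{ac}({\cal M})$ is the claimed symmetric set.

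For 3) I would work with the quadratic form of $A^{el}_k$ in $L_2(\R,\mu\,dz)$, which equals $\int_\R\er^{-1}(|p'|^2+\la_k|p|^2)\,dz$, and the analogous form of $A^m_l$, together with the identity
\[
\int_\R\frac{|p'|^2+\la_k|p|^2}{\er}\,dz-c^{el}_k\int_\R\mu|p|^2\,dz
=\int_\R\frac{|p'|^2}{\er}\,dz+\la_k\int_\R\frac{\er_*\mu_*-\er\mu}{\er\,\er_*\mu_*}\,|p|^2\,dz .
\]
If $\er\mu\le\er_*\mu_*$ everywhere, the last integrand is non-negative, so $A^{el}_k\ge c^{el}_k$ and there is no spectrum below the threshold; equality in the form would force $\int\er^{-1}|p'|^2=0$, i.e. $p=\const\in L_2(\R)$, hence $p=0$, so $c^{el}_k$ is not an eigenvalue either; and above $c^{el}_k$ the spectrum is purely absolutely continuous by the short-range reduction. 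The same holds for $A^m_l$, while $A^0$ has no eigenvalue because its form $\int\er^{-1}|p'|^2$ vanishes only on constants, which are not in $L_2(\R)$. Therefore $\si_p({\cal M}^2)=\emptyset$ and $\si_p({\cal M})=\emptyset$.

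If instead $\er(z_0)\mu(z_0)>\er_*\mu_*$ at some point, continuity gives $\er_*\mu_*-\er\mu<0$ on a neighbourhood of $z_0$. Fixing a smooth bump $p$ supported there, the identity shows that the left-hand side becomes negative once $\la_k$ is large, since the negative potential term grows linearly in $\la_k$ while the kinetic term is fixed; hence $\inf\si(A^{el}_k)<c^{el}_k$ and $A^{el}_k$ has an eigenvalue below its threshold for all large $k$. Because $A^{el}_k\ge\la_k/\|\er\mu\|_{L_\infty}\to\infty$, these eigenvalues tend to $+\infty$, so there are infinitely many of them; and for large $k$ they exceed $\ka_2/(\er_*\mu_*)$, the bottom of $\si_{ess}({\cal M}^2)$. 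Taking $\pm$ square roots then yields infinitely many eigenvalues of ${\cal M}$, located inside its continuous spectrum.
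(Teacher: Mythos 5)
Your proposal is correct and follows essentially the same route as the paper: reduction to the one-dimensional Schr\"odinger operators of Theorem \ref{t47}, classical short-range scattering theory applied blockwise (the paper's Theorem \ref{t51}), Friedlander's inequality $\ka_2<\la_1$ to locate the bottom of the spectrum, and the same form-comparison and test-function arguments for part 3 (the paper's \S\S 5, 7). The one step you flag but do not carry out --- that the potentials produced by the Liouville change of variables differ from their limiting constants $\la/(\er_*\mu_*)$ by functions in $L_1(\R)$ --- is precisely what the paper proves in Lemmas \ref{l52} and \ref{l53}.
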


\begin{theorem}
\label{t16}
Let the conditions of Theorem \ref{t13} be fulfilled.
Assume moreover that $\er, \mu \in W_{1, loc}^2 (\R)$ and that they are periodic,
\begin{equation}
\label{04}
\er (z+a) = \er (z), \qquad \mu (z+a) = \mu (z) .
\end{equation}
Then the spectrum of the Maxwell operator is absolutely continuous,
$$
\si({\cal M}) = \si_{ac}({\cal M}), \qquad  \si_{sc} ({\cal M}) =\emptyset,
\qquad \si_p({\cal M}) = \emptyset ,
$$
and the number of gaps in the spectrum is finite.
\end{theorem}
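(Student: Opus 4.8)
The plan is to deduce everything from the reduction of ${\cal M}^2$ to a family of one--dimensional periodic Schr\"odinger operators and then to invoke Floquet--Bloch theory on the line. First I would record that the periodicity \eqref{04} is preserved by the unitary reduction of Theorem \ref{t47} and Corollary \ref{c48}. Each of the operators $A^{el}_k$, $A^m_l$, $A^0$ of Theorem \ref{t13} is brought to the Schr\"odinger form $-d^2/dy^2+V$ by the Liouville change of variable $y=\int_0^z\sqrt{\er(t)\mu(t)}\,dt$ together with the accompanying unitary rescaling of the weighted space. Since $\er\mu$ is $a$--periodic, the new variable satisfies $y(z+a)=y(z)+b$ with $b=\int_0^a\sqrt{\er\mu}\,dt$, so \emph{all} the resulting potentials are periodic with the one common period $b$; moreover they have the form $V=q+\la w$, where $q$ is one of two fixed periodic functions (arising from the $A^{el}$-- resp. the $A^m$--family), the function $w=(\er\mu)^{-1}$ written in the variable $y$ is periodic with $w\ge\|\er\mu\|_{L_\infty}^{-1}>0$, and $\la$ runs through the countable sets $\{0,\la_1,\la_2,\dots\}$ (el) and $\{\ka_2,\ka_3,\dots\}$ (m). The hypothesis $\er,\mu\in W_{1,loc}^2(\R)$ guarantees, through the explicit formulae for the potentials, that these are genuine locally integrable periodic functions.

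Next I would establish absolute continuity. By the classical theorem of L.~Thomas, a Schr\"odinger operator on $\R$ with a periodic locally integrable potential has purely absolutely continuous spectrum, so each summand $-d^2/dy^2+V$ has $\si_p=\si_{sc}=\emptyset$. A countable orthogonal sum of purely absolutely continuous operators is again purely absolutely continuous, since the spectral measure of any vector is the sum of the absolutely continuous spectral measures of its components (a countable sum of absolutely continuous measures of finite total mass is absolutely continuous). Thus ${\cal M}^2$ is purely absolutely continuous. Finally I pass to ${\cal M}$ itself: by Corollary \ref{c48}, ${\cal M}$ is unitarily equivalent to the orthogonal sum of the operators $\pm\sqrt{L}$ with $L=-d^2/dy^2+V\ge0$. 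The map $\la\mapsto\pm\sqrt\la$ is a smooth diffeomorphism of $(0,\infty)$ with non-vanishing derivative, hence carries an absolutely continuous spectral measure into an absolutely continuous one; and since no operator $L$ has $0$ as an eigenvalue, the point $0$ produces no eigenvalue of ${\cal M}$. Therefore $\si_p({\cal M})=\si_{sc}({\cal M})=\emptyset$ and $\si({\cal M})=\si_{ac}({\cal M})$.

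It remains to prove that the number of gaps is finite, and I expect this to be the main difficulty. By the previous step $\si({\cal M}^2)=\bigcup_\la\si(L_\la)$, the union being over the two base potentials and the couplings $\la\in\{0,\la_1,\dots\}\cup\{\ka_2,\dots\}$, where $L_\la=-d^2/dy^2+q+\la w$; it suffices to show that this union omits only a bounded subset of $[0,\infty)$, for then $\si({\cal M})=\pm\sqrt{\si({\cal M}^2)}$ has finitely many gaps as well (at most one of them surrounding the origin). I would work with the Floquet band functions $E_n(\theta,\la)$ of $L_\la$. Each $E_n$ is continuous and, because $w\ge w_->0$, strictly increasing in $\la$ with $\partial_\la E_n=\<\phi_{n,\theta},w\,\phi_{n,\theta}\>\in[w_-,w_+]$, $w_\pm=\min/\max w$. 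Hence, for a fixed energy $E$, the set of couplings for which $E$ lies in the $n$-th band is a $\la$-interval, while the complementary $\la$-intervals (where $E$ falls into an energy gap of $L_\la$) have length at most $\gamma_n/w_-$, with $\gamma_n$ the width of the $n$-th gap. The high-energy asymptotics of periodic operators give band widths growing like $\sqrt E$, whereas the gap widths satisfy $\gamma_n\to0$; thus at large $E$ the admissible $\la$-intervals become long and their complements short. Combining this with the Weyl asymptotics $\la_k\sim\tfrac{4\pi}{|U|}k$ and $\ka_l\sim\tfrac{4\pi}{|U|}l$, which show that the sampling points $\{\la_k\},\{\ka_l\}$ have bounded mean spacing, I would conclude that for all sufficiently large $E$ at least one sampled coupling falls into an admissible interval, i.e. $E\in\si(L_\la)$ for some $\la$; this yields $[E_*,\infty)\subset\si({\cal M}^2)$ for some $E_*$. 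The delicate point, and the crux of the whole argument, is to make this covering uniform in $\la$: the lowest band of $L_\la$ becomes exponentially narrow as $\la\to\infty$, so each fixed $E$ must be reached through the \emph{wide} intermediate bands of operators with moderate coupling, rather than through the narrow bottom bands of operators with very large coupling.
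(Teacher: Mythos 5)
Your reduction to periodic one-dimensional Schr\"odinger operators and your proof of absolute continuity are correct and essentially identical to the paper's: periodicity passes to the potentials with common period $b=\int_0^a\sqrt{\er\mu}\,ds$, each summand is purely absolutely continuous by the classical theorem on periodic operators, a countable orthogonal sum of purely a.c.\ operators is purely a.c., and the square roots $\pm\sqrt{H}$ inherit this. The gap in your proposal is the finiteness of the number of gaps, and it is exactly where you yourself flag ``the crux'': your covering scheme over the whole family of couplings $\la$ requires uniform control of the band--gap structure of $L_\la=-d^2/dy^2+q+\la w$ as $\la\to\infty$, and this uniformity genuinely fails (for coupling $\la$ comparable to the energy $E$, the point $E$ sits among the low bands of $L_\la$, whose gaps are not small). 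You state the difficulty but do not resolve it, so the argument is incomplete. There is also a secondary gap: showing that $\bigcup_\la\si(L_\la)$ omits only a bounded set does \emph{not} by itself give finitely many gaps, since the spectrum below the threshold $E_*$ could a priori have infinitely many components; one must also invoke the lower bounds $\si(H_k^{el})\subset[\la_k/\|\er\mu\|_{L_\infty},\infty)$, $\si(H_l^m)\subset[\ka_l/\|\er\mu\|_{L_\infty},\infty)$ to see that only finitely many summands contribute spectrum below $E_*$, each with finitely many gaps there.

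The paper avoids your uniformity problem entirely by using only \emph{two} operators from the family, say $H_1^{el}$ and $H_2^{el}$, with potentials $V_k^{el}=\eta^2-\eta'+\la_k(\tilde\er\tilde\mu)^{-1}$, $k=1,2$. Their mean values over a period satisfy $w_2>w_1$ because $\la_2>\la_1$. By the band-edge asymptotics for periodic operators (stated in the paper's Theorem 8.1), at high energy every gap of $H_k^{el}$ is contained in a $\de$-neighbourhood of a point $\pi^2n^2b^{-2}+w_k$. Choosing $\de<(w_2-w_1)/2$ and $N$ with $(2N+1)\pi^2>(w_2-w_1+2\de)b^2$, one checks that a point $\la\ge K$ cannot lie simultaneously in a gap of $H_1^{el}$ and a gap of $H_2^{el}$: equal band indices $n=m$ would force $|w_1-w_2|<2\de$, while $n\neq m$ would force $\pi^2|n^2-m^2|b^{-2}<w_2-w_1+2\de$, both contradictions. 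Hence $[K,\infty)\subset\si(H_1^{el})\cup\si(H_2^{el})\subset\si({\cal M}^2)$, and the finitely many remaining summands with spectrum below $K$ produce only finitely many gaps. If you want to salvage your own approach, the fix is the same in spirit: do not try to cover a fixed high energy by varying $\la$ over the whole unbounded family; instead fix two (any two) distinct couplings and use the fact that their high-energy gaps are tiny intervals centered on two lattices offset by $(\la_2-\la_1)\overline{(\tilde\er\tilde\mu)^{-1}}\neq 0$, so the gaps cannot overlap.
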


\subsection{Comments}
{\bf 1.} It is interesting to compare the problem under consideration with the inverse situation.
The Maxwell operator in a cylinder with matrix-valued coefficients that depend
on the cross-sectional variables only was considered in \cite{F18}.
The spectrum of such operator is absolutely continuous.
If the cross-section is multiply connected then the spectrum fills the whole real line;
if the cross-section is simply connected then the spectrum has exactly one gap centered at zero.
Note that the dependence of the spectrum of the Maxwell operator in a cylinder on the topology
of the cross-section is well known in the case of smooth boundary and trivial coefficients
$\er (x) \equiv \mu (x) \equiv \1$ (vacuum in the cylinder),
see \cite[\S 91]{L8}.
In \cite{F18} this statement is generalised to the case of Lipschitz boundary 
and non-trivial coefficients.

{\bf 2.} Under the conditions of Theorem \ref{t15} we do not know 
if the eigenvalues outside the continuous spectrum can occur.

{\bf 3.} The common wisdom assumes that if a periodic problem admits a separation of variables
then the Bethe-Sommerfeld conjecture (the number of gaps in the spectrum is finite)
holds true.
Theorem \ref{t16} supports this point.

\subsection{Plan of the paper}
In \S\S 2 and 3 we separate the variables.
In \S 4 we prove Theorem \ref{t13}.
The key observation here is that the Maxwell operator maps the functions of type
$\left( \begin{array}{cc} a(x_3) \dd_1\ph_k \\
a(x_3) \dd_2 \ph_k \\
c(x_3) \ph_k \\
f(x_3) \dd_2 \ph_k \\
- f(x_3) \dd_1 \ph_k \\
0\end{array} \right)$ into the functions of the same type;
and the same is true for the functions of type 
$\left( \begin{array}{cc} b(x_3) \dd_2 \psi_l \\
- b(x_3) \dd_1 \psi_l \\
0 \\
e(x_3) \dd_1 \psi_l \\
e(x_3) \dd_2 \psi_l \\
g(x_3) \psi_l
\end{array} \right)$.
In \S 5 we prove Corollary \ref{c14}.
In \S 6 we transform the operators 
$ A^{el}_k$, $A^m_l$, $A^0$ introduced in Theorem \ref{t13} 
into the Schr\" odinger type operators.
Here we assume that the coefficients are twice differentiable.
Finally, we prove Theorem \ref{t15} in \S 7, and Theorem \ref{t16} in \S 8.

\section{Separation of variables}
\subsection{Decompositions of the space $L_2 (U, \C^2)$}
If $U$ is multiply connected we denote by $\G_j$ the connected components of its boundary,
$\dd U = \cup_{j=1}^N \G_j$.
Introduce the space of harmonic functions in $U$ that are constant on each component of the boundary,
$$
{\cal L} = \left\{ \oo \in W_2^1 (U) : \D \oo = 0, \left. \oo \right|_{\G_j} = c_j, j = 1, \dots, N\right\} .
$$
Clearly, $\dim {\cal L} = N$.
Put 
$$
{\cal H}^0 = \left\{ \n \oo : \oo \in {\cal L} \right\}.
$$
The identical constant does not make a contribution here, so $\dim {\cal H}^0 = N-1$.
Let us fix an orthonormal basis $\{\n\oo_j\}_{j=1}^{N-1}$ in ${\cal H}^0$.

\begin{lemma}
\label{l20}
If $\ph \in \mathring W_2^1 (U)$, $\eta \in W_2^1 (U)$, then
\begin{equation}
\label{21}
\int_U \left(\dd_1 \ph \dd_2 \eta - \dd_2 \ph \dd_1 \eta\right) dx_1 dx_2 = 0 .
\end{equation}
If $\oo \in {\cal L}$, $\eta \in W_2^1 (U)$, then
\begin{equation}
\label{215}
\int_U \left(\dd_1 \oo \dd_2 \eta - \dd_2 \oo \dd_1 \eta\right) dx_1 dx_2 = 0 .
\end{equation}
\end{lemma}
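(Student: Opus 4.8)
The plan is to recognise the integrand $\dd_1 u\,\dd_2\eta - \dd_2 u\,\dd_1\eta$ as the Jacobian $\det(\n u,\n\eta)$ and to note first that it defines a continuous bilinear form on $W_2^1(U)\times W_2^1(U)$: since $\dd_j u,\dd_j\eta\in L_2(U)$, the product lies in $L_1(U)$ and the form is bounded by $2\|\n u\|_{L_2}\|\n\eta\|_{L_2}$. Both assertions \eqref{21} and \eqref{215} will then follow from a single \emph{master identity}, valid for $u\in W_2^1(U)$ and $\eta\in C^\infty(\overline U)$,
\[
\int_U \l(\dd_1 u\,\dd_2\eta - \dd_2 u\,\dd_1\eta\r)\,dx_1dx_2 = \int_{\dd U} u\,\dd_\tau\eta\,ds,
\]
where $\dd_\tau$ denotes differentiation along the boundary.

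To obtain this identity I would rewrite the integrand in divergence form, using that the mixed second derivatives of the smooth function $\eta$ coincide,
\[
\dd_1 u\,\dd_2\eta - \dd_2 u\,\dd_1\eta = \dd_1(u\,\dd_2\eta) - \dd_2(u\,\dd_1\eta),
\]
and then apply the Gauss--Green formula $\int_U\dd_i g\,dx = \int_{\dd U} g\,\nu_i\,ds$ to the two $W_2^1(U)$ functions $g=u\,\dd_2\eta$ and $g=u\,\dd_1\eta$. The boundary integrand becomes $u(\nu_1\dd_2\eta - \nu_2\dd_1\eta) = u\,\dd_\tau\eta$, since the unit tangent to $\dd U$ is $\tau=(-\nu_2,\nu_1)$.

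With the master identity in hand, both statements reduce to the behaviour of the trace of $u$ on $\dd U$. For \eqref{21} take $u=\ph$; as $\ph\in\mathring W_2^1(U)$ its trace vanishes, so the right-hand side is zero for every $\eta\in C^\infty(\overline U)$, and the continuity of the form together with the density of $C^\infty(\overline U)$ in $W_2^1(U)$ extends the identity to all $\eta\in W_2^1(U)$. (Alternatively, \eqref{21} can be proved directly, with no boundary term at all, by approximating $\ph$ in $\mathring W_2^1(U)$ by functions of $C_0^\infty(U)$ and shifting both derivatives onto $\ph$, whose mixed second derivatives then cancel.) For \eqref{215} take $u=\oo\in{\cal L}$; then the trace of $\oo$ equals the constant $c_j$ on each component $\G_j$, so
\[
\int_{\dd U}\oo\,\dd_\tau\eta\,ds = \sum_{j=1}^N c_j\int_{\G_j}\dd_\tau\eta\,ds = 0,
\]
because each $\G_j$ is a closed curve and the integral of the tangential derivative of the single-valued function $\eta$ around a closed loop vanishes. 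Again I pass from $\eta\in C^\infty(\overline U)$ to $\eta\in W_2^1(U)$ by density and continuity of the form.

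The main obstacle is purely technical and comes from the low regularity: I must make sure that the Gauss--Green formula and the vanishing of $\oint_{\G_j}\dd_\tau\eta$ are legitimate when $\dd U$ is only Lipschitz. Both facts are standard---the divergence theorem holds for $W_2^1$ fields on bounded Lipschitz domains, and each boundary component is a rectifiable closed curve along which the fundamental theorem of calculus applies---but they are the only points that require care. Note, incidentally, that the harmonicity of $\oo$ is not used in proving \eqref{215}; only its locally constant boundary trace enters.
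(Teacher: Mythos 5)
Your proof is correct, but it follows a genuinely different route from the paper's. You prove a single master identity $\int_U(\dd_1 u\,\dd_2\eta-\dd_2 u\,\dd_1\eta)\,dx_1dx_2=\int_{\dd U}u\,\dd_\tau\eta\,ds$ via the divergence theorem on the Lipschitz domain, and then read off both \eqref{21} and \eqref{215} from the boundary trace of $u$: zero trace for $\ph\in\mathring W_2^1(U)$, locally constant trace for $\oo\in{\cal L}$ combined with the vanishing of $\oint_{\G_j}\dd_\tau\eta\,ds$ over each closed component, finishing with a density argument in $\eta$. The paper avoids boundary integrals altogether: for \eqref{21} it approximates $\ph$ by $C_0^\infty(U)$ functions, for which the identity is immediate (both derivatives fall on $\ph$ and the mixed partials cancel), and for \eqref{215} it splits $\oo=\tilde\oo+\hat\oo$ with $\tilde\oo\in C^\infty(\overline U)$ constant in a neighbourhood of each $\G_j$ and $\hat\oo\in\mathring W_2^1(U)$; the $\hat\oo$ part is handled by \eqref{21}, while for $\tilde\oo$ the boundary terms vanish identically because $\n\tilde\oo\equiv 0$ near $\dd U$. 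Your approach buys conceptual clarity — it exhibits the bilinear form as a boundary circulation, which explains at a glance why only the trace of $u$ matters (and, as you note, why harmonicity of $\oo$ is irrelevant) — but it leans on heavier machinery for Lipschitz boundaries: the trace theorem, the Gauss--Green formula for $W_1^1$ fields, a.e.\ existence of the tangent, and the fundamental theorem of calculus along closed rectifiable curves. The paper's argument is more elementary and self-contained: its only nontrivial input is that a $W_2^1(U)$ function whose trace is constant on each $\G_j$ admits the stated cutoff decomposition (equivalently, that zero trace characterizes $\mathring W_2^1(U)$ on Lipschitz domains), and it needs no limiting argument in $\eta$, since its identities hold directly for all $\eta\in W_2^1(U)$. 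The technical points you flag as requiring care are exactly the ones the paper's decomposition trick is designed to sidestep.
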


\begin{proof}
Clearly, \eqref{21} holds for $\eta\in W_2^1(U)$ and $\ph \in C_0^\infty (U)$.
By continuity it is also true for $\ph\in \mathring W_2^1 (U)$.

Fix a function $\oo \in {\cal L}$.
Represent it as a sum $\oo = \tilde \oo + \hat \oo$,
where $\tilde \oo \in C^\infty (\overline U)$ 
and $\tilde \oo$ is constant in a neighbourhood of each component $\G_j$ of the boundary,
$j = 1, \dots, N$;
and $\hat \oo \in \mathring W_2^1 (U)$.
Then 
$$
\int_U \left(\dd_1 \hat \oo \dd_2 \eta - \dd_2 \hat \oo \dd_1 \eta\right) dx_1 dx_2 = 0 
$$
by \eqref{21}.
Furthermore, denote by $\nu$ the unit outward normal vector to the boundary.
Then
$$
\int_U \left(\dd_1 \tilde \oo \dd_2 \eta - \dd_2 \tilde \oo \dd_1 \eta\right) dx_1 dx_2 = 
\int_{\dd U} \left(\dd_1 \tilde \oo\, \nu_2 - \dd_2 \tilde \oo\, \nu_1\right) \eta\, dx_1 dx_2 = 0,
$$
due to the identity $\n \tilde \oo \equiv 0$ in the neighbourhood of $\dd U$.
Now \eqref{215} follows.
\end{proof}

\begin{lemma}
\label{l21}
The set of vector-functions
\begin{equation}
\label{20} 
\left\{ \left( \begin{array}{cc} \dd_1\ph_k \\ \dd_2 \ph_k \end{array} \right) \right\}_{k=1}^\infty, \qquad 
\left\{ \left( \begin{array}{cc} \dd_2\psi_l \\ -\dd_1 \psi_l \end{array} \right) \right\}_{l=2}^\infty, \qquad 
\left\{ \left( \begin{array}{cc} \dd_1\oo_j \\ \dd_2 \oo_j \end{array} \right) \right\}_{j=1}^{N-1}
\end{equation} 
is an orthogonal basis in $L_2(U, \C^2)$.
If the cross-section $U$ is simply connected the third set is absent.
\end{lemma}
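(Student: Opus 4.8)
The plan is to split the argument into two parts: the verification that the system is orthogonal, and the verification that it is complete.

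First I would check orthogonality. Inside the first family, one integration by parts together with $-\D\ph_k=\la_k\ph_k$ and $\left.\ph_k\right|_{\dd U}=0$ gives $\int_U\<\n\ph_k,\n\ph_{k'}\>\,dx_1dx_2=\la_k\delta_{kk'}$; inside the second family, $\int_U\<(\dd_2\psi_l,-\dd_1\psi_l),(\dd_2\psi_{l'},-\dd_1\psi_{l'})\>\,dx_1dx_2=\int_U\n\psi_l\cdot\n\psi_{l'}\,dx_1dx_2=\ka_l\delta_{ll'}$; and the third family is orthonormal by construction. The cross terms between the first and the second family coincide with the left-hand side of \eqref{21} for $\ph=\ph_k$, $\eta=\psi_l$, hence vanish; the cross terms between the second and the third family coincide with the left-hand side of \eqref{215} for $\oo=\oo_j$, $\eta=\psi_l$, hence vanish; the cross terms between the first and the third family vanish after one integration by parts, since $\left.\ph_k\right|_{\dd U}=0$ and $\D\oo_j=0$. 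When $U$ is simply connected one has $N=1$, so ${\cal H}^0=\{0\}$ and the third family is empty. All the norms are read off from \eqref{basis}.

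For completeness I would first identify the closed spans of the three families. As $\{\ph_k\}$ is complete in $L_2(U)$, it is an orthogonal basis of $\mathring W_2^1(U)$ in the energy inner product, and $\ph\mapsto\n\ph$ is, by the Poincar\'e inequality, an isomorphism of $\mathring W_2^1(U)$ onto the closed subspace ${\cal G}:=\{\n\ph:\ph\in\mathring W_2^1(U)\}$; hence the first family is an orthogonal basis of ${\cal G}$. Similarly $\eta\mapsto\n\eta$ is an isomorphism of $W_2^1(U)/\C$ onto the closed subspace $\n W_2^1(U)$, under which $\{\psi_l\}_{l\ge2}$ (recall $\psi_1\equiv\const$) becomes an orthogonal basis; rotating the plane by $\pi/2$ — an isometry of $L_2(U,\C^2)$ sending $(a,b)$ to $(b,-a)$ — turns $\n W_2^1(U)$ into ${\cal R}:=\{(\dd_2\eta,-\dd_1\eta):\eta\in W_2^1(U)\}$ and the second family into an orthogonal basis of it. Thus it remains to prove $L_2(U,\C^2)={\cal G}\oplus{\cal R}\oplus{\cal H}^0$. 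So let $w\in L_2(U,\C^2)$ be orthogonal to all three families. Orthogonality to ${\cal G}$ gives $\div w=0$ in $U$. Orthogonality to ${\cal R}$ means $\int_U\<w,(\dd_2\eta,-\dd_1\eta)\>\,dx_1dx_2=0$ for all $\eta\in W_2^1(U)$; testing against $\eta\in C_0^\infty(U)$ yields $\rot w=0$, and testing against arbitrary $\eta$ yields the vanishing of the tangential trace $\left.w_\tau\right|_{\dd U}=0$ in the sense of the integral identity of Definition \ref{d11}. Then $w$ is curl-free and divergence-free with zero tangential trace; the vanishing of $w_\tau$ kills the circulation of $w$ over each $\G_j$, so $w$ has a single-valued potential $w=\n\oo$, the equation $\div w=0$ makes $\oo$ harmonic, and $\left.w_\tau\right|_{\dd U}=0$ makes $\oo$ constant on each $\G_j$, i.e. $\oo\in{\cal L}$ and $w\in{\cal H}^0$. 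Finally $w\perp{\cal H}^0$ forces $w=0$.

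The main obstacle is this last step: the identification of the space of harmonic fields (simultaneously divergence- and curl-free) with vanishing tangential trace as precisely ${\cal H}^0=\n{\cal L}$, of dimension $N-1$. Here the topology of the cross-section enters through the number $N$ of boundary components, and here the trace theory on the Lipschitz boundary $\dd U$ must be handled carefully — one has to justify that $\left.w_\tau\right|_{\dd U}=0$ really annihilates every period of $w$ and that the reconstructed potential $\oo$ belongs to $W_2^1(U)$. The remaining steps (the integrations by parts and the reading of the norms) are routine.
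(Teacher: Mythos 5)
Your orthogonality argument and your reduction of completeness to the statement $L_2(U,\C^2)={\cal G}\oplus{\cal R}\oplus{\cal H}^0$ coincide with the paper's proof (the paper likewise invokes Lemma \ref{l20} for the cross terms and uses that $\{\ph_k\}$, $\{\psi_l\}$ are energy-orthogonal bases of $\mathring W_2^1(U)$ and $W_2^1(U)/\C$). However, there is a genuine gap exactly where you flag ``the main obstacle'': you assert that for an $L_2$ field $w$ with $\operatorname{curl} w=0$, $\div w=0$ and weakly vanishing tangential trace, the trace condition ``kills the circulation over each $\G_j$,'' so that a single-valued potential $\oo\in W_2^1(U)$, constant on each $\G_j$, exists. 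This is precisely the non-routine content of the lemma on a multiply connected Lipschitz domain: circulations of an $L_2$ field are not defined pathwise, the tangential trace lives only in a negative-order space on $\dd U$, and single-valuedness plus membership $\oo\in W_2^1(U)$ plus constancy of the boundary values all require proof. Describing what ``has to be justified'' is not the same as justifying it, so as it stands the completeness half of the proof is incomplete.

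The paper fills this hole with a device that avoids all trace theory and period bookkeeping: extension by zero. From your own weak formulation, orthogonality to the second family gives $\int_U \left(w_1\,\dd_2\eta - w_2\,\dd_1\eta\right) dx_1 dx_2 = 0$ for \emph{all} $\eta\in W_2^1(U)$; if $\tilde w$ denotes the extension of $w$ by zero to $\R^2$, then for every $\eta\in W_2^1(\R^2)$ the same identity holds over $\R^2$ (restrict $\eta$ to $U$). Hence $\operatorname{curl}\tilde w=0$ distributionally on the \emph{simply connected} plane, so $\tilde w=\n\tilde\oo$ with $\tilde\oo\in W_{2,loc}^1(\R^2)$ — single-valuedness is free, no periods ever appear. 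Since $\n\tilde\oo=0$ on $\R^2\setminus \overline U$, the function $\tilde\oo$ is constant on each connected component of the complement, which simultaneously yields $\oo:=\left.\tilde\oo\right|_U\in W_2^1(U)$ and $\left.\oo\right|_{\G_j}=\const$; orthogonality to the first family then gives $\D\oo=0$, so $w\in{\cal H}^0$, and $w\perp{\cal H}^0$ forces $w=0$. If you replace your circulation argument by this extension trick, your proposal becomes a complete proof essentially identical to the paper's.
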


\begin{proof}
Orthogonality.
Clearly, $\n\ph_k \perp \n\ph_m$ and $\n\psi_k \perp \n\psi_m$ in $L_2(U, \C^2)$ if $k \neq m$.
Lemma \ref{l20} implies that all functions $\n \ph_k$ and $\n \oo_j$ are orthogonal to all functions
$\left(\begin{array}{cc} \dd_2\psi_l \\ -\dd_1 \psi_l \end{array}\right)$.
Furthermore,
$$
\int_U \left(\dd_1 \ph_k \dd_1 \overline{\oo_j} + \dd_2 \ph_k \dd_2 \overline{\oo_j}\right) dx_1 dx_2
= - \int_U \ph_k \overline{\D\oo_j} dx_1 dx_2 = 0
$$
by definition of the space ${\cal H}^0$.

Completeness.
Assume that a vector-function $f \in L_2(U, \C^2)$ is orthogonal to all functions
$\left( \begin{array}{cc} \dd_1\ph_k \\ \dd_2 \ph_k \end{array} \right)$, $k = 1, 2, \dots$, and
$\left(\begin{array}{cc} \dd_2\psi_l \\ -\dd_1 \psi_l \end{array}\right)$, $l=2, 3, \dots$.
Then 
$$
\int_U \left(f_1 \dd_2 \psi - f_2 \dd_1 \psi\right) dx_1 dx_2 = 0 \qquad \forall \ \psi \in W_2^1 (U),
$$
because $\{\psi_l\}_{l=1}^\infty$ is a basis in $W_2^1(U)$, and $\psi_1 \equiv \const$.
Put
$$
\tilde f(x_1, x_2) = 
\begin{cases}
f(x_1, x_2), & \text{if} \ (x_1, x_2) \in U,\\
0, & \text{if} \ (x_1,x_2) \in \R^2 \setminus U .
\end{cases}
$$
Then
$$
\int_{\R^2} \left(\tilde f_1 \dd_2 \psi - \tilde f_2 \dd_1 \psi\right) dx_1 dx_2 = 0 \qquad \forall \ \psi \in W_2^1 (\R^2),
$$
and therefore, there is a function $\tilde \oo \in W_{2,loc}^1 (\R^2)$ such that $\tilde f = \n \tilde \oo$.
Moreover, the function $\tilde \oo$ is constant on the connected components of $\R^2 \setminus U$.
This means that $f = \n \oo$ where $\oo = \left. \tilde \oo \right|_U$, and
$$
\oo \in  W_2^1 (U), \qquad \left. \oo\right|_{\G_j} = \const \quad \forall \ j = 1, \dots, N .
$$
Finally, the condition $f \perp \n \ph_k$ for all $k \in \N$ implies $\div f = 0$ in $U$.
Thus, $\D \oo = 0$, $\oo \in {\cal L}$ and $f \in {\cal H}^0$.
\end{proof}

By 90 degree rotation we obtain 
\begin{cor}
\label{c22}
The set of vector-functions
\begin{equation}
\label{22} 
\left\{ \left( \begin{array}{cc} \dd_1\psi_l \\ \dd_2 \psi_l \end{array} \right) \right\}_{l=2}^\infty, \qquad 
\left\{ \left( \begin{array}{cc} \dd_2\ph_k \\ -\dd_1 \ph_k \end{array} \right) \right\}_{k=1}^\infty, \qquad 
\left\{ \left( \begin{array}{cc} \dd_2\oo_j \\ -\dd_1 \oo_j \end{array} \right) \right\}_{j=1}^{N-1}
\end{equation} 
is an orthogonal basis in $L_2(U, \C^2)$.
If the cross-section $U$ is simply connected the third set is absent.
\end{cor}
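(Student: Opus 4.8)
The plan is to produce a single pointwise unitary operator on $L_2(U,\C^2)$ that carries the orthogonal basis of Lemma \ref{l21} onto the system \eqref{22}, so that orthogonality and completeness are inherited for free rather than re-proved. Concretely, I would introduce the operator $\mathcal R$ acting on $L_2(U,\C^2)$ as multiplication by the constant real orthogonal matrix $\left(\begin{smallmatrix} 0 & 1 \\ -1 & 0\end{smallmatrix}\right)$, i.e. $\mathcal R$ sends a field $(a,b)^\top$ to $(b,-a)^\top$ at every point of $U$. Since this matrix is orthogonal, $\mathcal R$ is unitary on $L_2(U,\C^2)$; in particular it is a bijection preserving the scalar product, and therefore it maps every orthogonal basis to an orthogonal basis.

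The one substantive step is then the elementary verification that $\mathcal R$ sends each of the three families in \eqref{20} to a scalar multiple of a family in \eqref{22}. For the two gradient families one computes $\mathcal R(\dd_1\ph_k,\dd_2\ph_k)^\top=(\dd_2\ph_k,-\dd_1\ph_k)^\top$, which is exactly the second family of \eqref{22}, and $\mathcal R(\dd_1\oo_j,\dd_2\oo_j)^\top=(\dd_2\oo_j,-\dd_1\oo_j)^\top$, which is the third family of \eqref{22}. For the rotated family one gets $\mathcal R(\dd_2\psi_l,-\dd_1\psi_l)^\top=(-\dd_1\psi_l,-\dd_2\psi_l)^\top=-(\dd_1\psi_l,\dd_2\psi_l)^\top$, that is, the negative of the first family of \eqref{22}. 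Hence $\mathcal R$ maps the basis \eqref{20} onto \eqref{22} up to a permutation of the three families and an overall sign on the $\psi_l$-family. I would then conclude: a unitary operator carries an orthogonal basis to an orthogonal basis, and neither reordering the families nor multiplying individual vectors by $-1$ affects orthogonality or the basis property, so \eqref{22} is an orthogonal basis of $L_2(U,\C^2)$. In the simply connected case $N=1$ the third family is empty on both sides, so the statement specializes correctly.

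I do not anticipate any genuine obstacle here: because $\mathcal R$ transports all the orthogonality and completeness relations established in Lemma \ref{l21}, no new integral identities and no repetition of the completeness argument are needed. The only point demanding a little care is the bookkeeping — noting that the rotation interchanges the roles of the $\ph_k$-type and $\psi_l$-type fields (gradients become rotated gradients and vice versa) and tracking the harmless overall sign picked up by the $\psi_l$-family.
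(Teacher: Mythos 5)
Your proposal is correct and is exactly the paper's argument: the paper's entire proof of this corollary is the phrase ``By 90 degree rotation we obtain,'' and you have simply spelled out that rotation as the unitary multiplication operator $\left(\begin{smallmatrix} 0 & 1 \\ -1 & 0\end{smallmatrix}\right)$ carrying the basis \eqref{20} of Lemma \ref{l21} onto \eqref{22} up to permutation and signs. The bookkeeping you do (gradients become rotated gradients, the $\psi_l$-family picks up a sign) is the right level of detail the paper leaves implicit.
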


\begin{rem}
Functions $\left( \begin{array}{cc} \dd_1\oo_j \\ \dd_2 \oo_j \end{array} \right)$
solve the electrostatic problem in $U$
$$
\operatorname{curl} f = 0, \qquad \div f = 0, \qquad \left.f_\tau\right|_{\dd U} = 0.
$$
Functions $\left( \begin{array}{cc} \dd_2\oo_j \\ -\dd_1 \oo_j \end{array} \right)$
solve magnetostatic problem in $U$
$$
\operatorname{curl} f = 0, \qquad \div f = 0, \qquad \left.f_\nu\right|_{\dd U} = 0.
$$
Here the boundary conditions $\left.f_{\tau,\nu}\right|_{\dd U} = 0$ understood 
in the sence of integral identities analoguous to Definition \ref{d11}
(see \eqref{A1} below).
\end{rem}

\subsection{Key isomorphisms}
Starting from this point we use the following agreement:
the indices $k$ and $m$ belong to $\N$,
index $l \ge 2$, index $j \in \{1, \dots, N-1\}$.

Introduce the auxiliary Hilbert spaces ${\cal H}_{el}$ and ${\cal H}_m$.
Elements of ${\cal H}_{el}$ are the sets of functions from $L_2(\R)$
$$
\{a_k\}_{k=1}^\infty, \quad \{b_l\}_{l=2}^\infty,
\quad \{c_m\}_{m=1}^\infty, \quad \{d_j\}_{j=1}^{N-1}
$$
with finite norm
$$
\int_\R \left(\sum_{k=1}^\infty \la_k |a_k(z)|^2 + \sum_{l=2}^\infty \ka_l |b_l(z)|^2
+ \sum_{m=1}^\infty |c_m(z)|^2 + \sum_{j=1}^{N-1} |d_j(z)|^2\right) \er(z) \, dz .
$$
Elements of ${\cal H}_m$ are the sets of functions from $L_2(\R)$
$$
\{e_l\}_{l=2}^\infty, \quad \{f_k\}_{k=1}^\infty,
\quad \{g_m\}_{m=1}^\infty, \quad \{h_j\}_{j=1}^{N-1}
$$
with finite norm
$$
\int_\R \left(\sum_{l=2}^\infty \ka_l |e_l(z)|^2 + \sum_{k=1}^\infty \la_k |f_k(z)|^2
+ \sum_{m=1}^\infty |g_m(z)|^2 + \sum_{j=1}^{N-1} |h_j(z)|^2\right) \mu(z) \, dz .
$$

Introduce the operator
$$
I_{el} : L_2 (\Pi, \C^3; \er dx) \to {\cal H}_{el}
$$
acting by the following rule.
Let $E \in L_2 (\Pi, \C^3)$.
For every $x_3 \in \R$ we expand the vector-function
$\left( \begin{array}{cc} E_1(x) \\ E_2(x) \end{array} \right)$
in basis \eqref{20}, and expand the scalar function $E_3(x)$ in basis $\{\ph_k\}_{k=1}^\infty$:
\begin{equation}
\label{23}
\left( \begin{array}{cc} E_1(x) \\ E_2(x) \end{array} \right) =
\sum_{k=1}^\infty a_k (x_3) \left( \begin{array}{cc} \dd_1\ph_k \\ \dd_2 \ph_k \end{array} \right) 
+ \sum_{l=2}^\infty b_l (x_3) \left( \begin{array}{cc} \dd_2\psi_l \\ -\dd_1 \psi_l \end{array} \right) 
+ \sum_{j=1}^{N-1} d_j(x_3) \left( \begin{array}{cc} \dd_1\oo_j \\ \dd_2 \oo_j \end{array} \right),
\end{equation}
\begin{equation}
\label{24}
E_3(x) = \sum_{m=1}^\infty c_m (x_3) \ph_m (x_1, x_2) ,
\end{equation}
and define
$$
I_{el} E = \left\{ \{a_k\}_{k=1}^\infty, \{b_l\}_{l=2}^\infty, \{c_m\}_{m=1}^\infty, \{d_j\}_{j=1}^{N-1} \right\}.
$$
Clearly,
\begin{eqnarray*}
\int_\Pi |E(x)|^2 \er(x) dx 
= \int_\R \left(\sum_{k=1}^\infty \la_k |a_k(x_3)|^2 + \sum_{l=2}^\infty \ka_l |b_l(x_3)|^2 \right.\\
\left.+ \sum_{m=1}^\infty |c_m(x_3)|^2 + \sum_{j=1}^{N-1} |d_j(x_3)|^2\right) \er(x_3) \, dx_3 ,
\end{eqnarray*}
thus, the operator $I_{el}$ is an isometric isomorphism of Hilbert spaces.

In the same way, introduce the operator
$$
I_m : L_2 (\Pi, \C^3; \mu dx) \to {\cal H}_m
$$
Let $H \in L_2 (\Pi, \C^3)$.
For every $x_3 \in \R$ we expand the vector-function
$\left( \begin{array}{cc} H_1(x) \\ H_2(x) \end{array} \right)$
in basis \eqref{22}, and expand the scalar function $H_3(x)$ in basis $\{\psi_m\}_{m=1}^\infty$:
\begin{equation}
\label{25}
\left( \begin{array}{cc} H_1(x) \\ H_2(x) \end{array} \right) =
\sum_{l=2}^\infty e_l (x_3) \left( \begin{array}{cc} \dd_1\psi_l \\ \dd_2 \psi_l \end{array} \right) 
+ \sum_{k=1}^\infty f_k (x_3) \left( \begin{array}{cc} \dd_2\ph_k \\ -\dd_1 \ph_k \end{array} \right) 
+ \sum_{j=1}^{N-1} h_j(x_3) \left( \begin{array}{cc} \dd_2\oo_j \\ -\dd_1 \oo_j \end{array} \right),
\end{equation}
\begin{equation}
\label{26}
H_3(x) = \sum_{m=1}^\infty g_m (x_3) \psi_m (x_1, x_2) ,
\end{equation}
and define
$$
I_m H = \left\{ \{e_l\}_{l=2}^\infty, \{f_k\}_{k=1}^\infty, \{g_m\}_{m=1}^\infty, \{h_j\}_{j=1}^{N-1} \right\}.
$$
We have
\begin{eqnarray*}
\int_\Pi |H(x)|^2 \mu(x) dx 
= \int_\R \left(\sum_{l=2}^\infty \ka_l |e_l(x_3)|^2 + \sum_{k=1}^\infty \la_k |f_k(x_3)|^2 \right.\\
\left.+ \sum_{m=1}^\infty |g_m(x_3)|^2 + \sum_{j=1}^{N-1} |h_j(x_3)|^2\right) \mu(x_3) \, dx_3 ,
\end{eqnarray*}
so, the operator $I_m$ is an isometric isomorphism of Hilbert spaces.

Next, we describe the action of the operators $I_{el}$, $I_m$ on the different spaces from Section 1.1.

\subsection{Spaces of divergence-free functions}
\begin{lemma}
\label{l23}
The following identities take place
\begin{equation}
\label{27}
I_{el} J(\er) = 
\left\{ \{a_k, b_l, c_m, d_j\} \in {\cal H}_{el} :
\er c_m \in W_2^1(\R), (\er c_m)' = \la_m \er a_m \ \ \forall m \in \N \right\} ;
\end{equation}
\begin{equation}
\label{28}
I_m J(\nu, \mu) = 
\left\{ \{e_l, f_k, g_m, h_j\} \in {\cal H}_m : g_1 \equiv 0,
\mu g_l \in W_2^1(\R), (\mu g_l)' = \ka_l \mu e_l \ \ \forall l \ge 2 \right\} .
\end{equation}
\end{lemma}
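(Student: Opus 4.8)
The plan is to translate each of the two conditions defining $J(\er)$ and $J(\nu,\mu)$ into the coefficient language through the isometric isomorphisms $I_{el}$ and $I_m$. Since $J(\er)\subset L_2(\Pi,\C^3,\er\,dx)$, $J(\nu,\mu)\subset L_2(\Pi,\C^3,\mu\,dx)$, and $I_{el}$, $I_m$ map these spaces isometrically onto ${\cal H}_{el}$, ${\cal H}_m$, it suffices to rewrite the scalar equation $\div(\er E)=0$ (resp. $\div(\mu H)=0$ together with the lateral boundary condition) as conditions on $\{a_k,b_l,c_m,d_j\}$ (resp. $\{e_l,f_k,g_m,h_j\}$). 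The algebraic heart of the matter is that the two-dimensional divergence annihilates all the basis fields in \eqref{20} except the gradients $\n\ph_k$: indeed $\div_{2D}(\dd_1\ph_k,\dd_2\ph_k)=\Delta\ph_k=-\la_k\ph_k$, while $\div_{2D}(\dd_2\psi_l,-\dd_1\psi_l)=0$ and $\div_{2D}(\dd_1\oo_j,\dd_2\oo_j)=\Delta\oo_j=0$. Hence only $a_k$ (through the Dirichlet modes) and $c_m$ (through $E_3$) enter the divergence, which is the source of \eqref{27}.

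To prove \eqref{27} I would argue as follows. Because $J(\er)$ carries no boundary condition, $\div(\er E)=0$ is understood in ${\cal D}'(\Pi)$, i.e. $\int_\Pi\er\<E,\n\oo\>\,dx=0$ for all $\oo\in C_0^\infty(\Pi)$; by density this extends to all $\oo\in\mathring W_2^1(\Pi)$, and in particular to the admissible fields $\oo=\ph_m(x_1,x_2)\chi(x_3)$ with $\chi\in C_0^\infty(\R)$, since $\ph_m\in\mathring W_2^1(U)$. Inserting the expansions \eqref{23}, \eqref{24} and using \eqref{basis} together with Lemma \ref{l20} (which makes $\n\ph_m$ orthogonal in $L_2(U,\C^2)$ to every solenoidal and every harmonic basis field) collapses the $U$-integration to the single mode $m$ and yields $\int_\R\big(\la_m\er a_m\overline\chi+\er c_m\overline{\chi'}\big)\,dz=0$ for every $\chi$. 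This is exactly the weak form of $(\er c_m)'=\la_m\er a_m$ on $\R$; as the right-hand side lies in $L_2(\R)$ and $\er c_m\in L_2(\R)$, we conclude $\er c_m\in W_2^1(\R)$, giving the inclusion $\subset$ in \eqref{27}. For the reverse inclusion I would take an arbitrary $\oo\in C_0^\infty(\Pi)$, expand its slices $\oo(\cdot,z)\in C_0^\infty(U)$ in $\{\ph_m\}$, perform the same reduction, and integrate by parts in $z$ (legitimate since $\er c_m\in W_2^1$ and the coefficients of $\oo$ are compactly supported); the relation $(\er c_m)'=\la_m\er a_m$ then makes every mode cancel, so $\div(\er E)=0$.

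The identity \eqref{28} follows by the same scheme applied to the basis \eqref{22}, with the Dirichlet modes replaced by the Neumann ones: now $\div_{2D}(\dd_1\psi_l,\dd_2\psi_l)=\Delta\psi_l=-\ka_l\psi_l$, while $(\dd_2\ph_k,-\dd_1\ph_k)$ and $(\dd_2\oo_j,-\dd_1\oo_j)$ are divergence-free. The essential difference is that $J(\nu,\mu)$ also encodes $(\mu H)_\nu|_{\dd\Pi}=0$; by Definition \ref{d11}($\nu$) the two requirements together are equivalent to the single identity $\int_\Pi\mu\<H,\n\oo\>\,dx=0$ for all $\oo\in W_2^1(\Pi)$, so the test fields need not vanish on $\dd\Pi$. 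This is precisely what permits the choice $\oo=\psi_m(x_1,x_2)\chi(x_3)$ with $\psi_m\in W_2^1(U)$, and the computation produces $\int_\R\big(\ka_m\mu e_m\overline\chi+\mu g_m\overline{\chi'}\big)\,dz=0$, hence $\mu g_m\in W_2^1(\R)$ and $(\mu g_m)'=\ka_m\mu e_m$ for every $m$. For the exceptional mode $m=1$ one has $\ka_1=0$ and $\psi_1\equiv\const$, so there is no coefficient $e_1$ and the relation degenerates to $(\mu g_1)'=0$; since $\mu g_1\in L_2(\R)$, a constant in $L_2(\R)$ must vanish, forcing $g_1\equiv0$ and accounting for the extra condition in \eqref{28}.

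The steps needing the most care are analytic rather than algebraic. First, the interchange of the infinite sums with the $\R$- and $U$-integrations must be justified (Parseval in the mixed space $L_2(U)\otimes L_2(\R)$ together with dominated convergence, using the finiteness of the ${\cal H}_{el}$- and ${\cal H}_m$-norms). Second, one must justify enlarging the class of test fields from $C_0^\infty(\Pi)$ to $\ph_m\chi$ (a density argument in $\mathring W_2^1(\Pi)$) in the electric case, and read the combined interior-plus-boundary condition correctly via Definition \ref{d11}($\nu$) in the magnetic case. Third, the regularity $\er c_m,\ \mu g_m\in W_2^1(\R)$ is obtained only a posteriori, from the membership of the right-hand sides in $L_2(\R)$, and is not assumed. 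I expect the magnetic boundary condition and the degenerate mode $m=1$ to be the most delicate point, since that is where the boundary and topological features of the cross-section enter.
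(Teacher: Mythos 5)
Your proposal is correct and follows essentially the same route as the paper: both reduce the divergence condition (and, in the magnetic case, the combined divergence-plus-boundary condition read through Definition \ref{d11}($\nu$)) to testing against the product functions $\ph_m\chi$ and $\psi_m\chi$, use the orthogonality relations of Lemma \ref{l21} and \eqref{basis} to collapse to a single mode, obtain the weak ODE $(\er c_m)'=\la_m\er a_m$ (resp. $(\mu g_l)'=\ka_l\mu e_l$), and dispose of the $m=1$ Neumann mode by noting that an $L_2(\R)$ function with vanishing derivative is zero. The only cosmetic difference is that the paper gets both inclusions at once from the density of such products in $\mathring W_2^1(\Pi)$ (resp. $W_2^1(\Pi)$), whereas you split the argument into a forward direction and a reverse direction by expanding an arbitrary test function, which rests on the same completeness facts.
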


\begin{proof}
Let $E \in L_2(\Pi, \C^3)$.
Expand it in the series \eqref{23}, \eqref{24}.
Fix $k \in \N$ and $\eta \in C_0^\infty (\R)$.
Using Lemma \ref{l21} and the identities \eqref{basis},
we obtain
\begin{equation}
\label{285}
\int_\Pi \left\< \er(x_3) E(x), \n \left(\ph_k (x_1, x_2) \eta (x_3)\right) \right\> dx
= \int_\R \left(\la_k \er(x_3) a_k(x_3) \overline{\eta(x_3)} + \er(x_3) c_k(x_3) \overline{\eta'(x_3)}\right) dx_3.
\end{equation}
The set of linear combinations of functions of type $\ph_k (x_1, x_2) \eta (x_3)$ is dense in $\mathring W_2^1 (\Pi)$.
Therefore, the equality $\div (\er E) = 0$ is equivalent to the condition that the right hand sides of \eqref{285} 
vanish for all $\eta \in C_0^\infty (\R)$ and for all $k \in \N$.
This means 
$$
\exists \ (\er c_k)' = \la _k \er a_k \qquad \forall \ k \in \N .
$$
This implies \eqref{27}.

Now, let $H \in L_2(\Pi, \C^3)$.
Expand it in the series \eqref{25}, \eqref{26}.
Fix $m \in \N$ and $\eta \in C_0^\infty (\R)$.
Using Corollary \ref{c22} and the identities \eqref{basis},
we obtain
\begin{eqnarray}
\nonumber
\int_\Pi \left\< \mu(x_3) H(x), \n \left(\psi_m (x_1, x_2) \eta (x_3)\right) \right\> dx\\
= \begin{cases}
\int_\R \mu(x_3) g_1 (x_3) \overline{\eta'(x_3)} dx_3, & \text{if} \ m =1,\\
\int_\R \left(\ka_m \mu(x_3) e_m (x_3) \eta (x_3) + \mu(x_3) g_m (x_3) \overline{\eta'(x_3)}\right) dx_3,
& \text{if} \ m \ge 2 .
\end{cases}
\label{287}
\end{eqnarray}
The set of linear combinations of functions of type $\psi_m (x_1, x_2) \eta (x_3)$ is dense in $W_2^1 (\Pi)$.
Therefore, the equalities $\div (\mu H) = 0$ and $\left(\mu H)_\nu\right|_{\dd \Pi} = 0$
are equivalent to the condition that the right hand sides of \eqref{287} 
vanish for all $\eta \in C_0^\infty (\R)$ and for all $m \in \N$.
Note, that for a function $g_1 \in L_2(\R)$ the conditions
$(\mu g_1)' \equiv 0$ and $g_1 \equiv 0$ are equivalent.
This implies \eqref{28}.
\end{proof} 

\section{Spaces $H(\rot)$, $H(\rot, \tau)$}
\subsection{"Basis" in $H(\rot,\tau)$}

\begin{lemma}
\label{l241}
Let 
$$
u \in L_2(U, \C^2), \qquad \div u \in L_2(U), 
\qquad \left. u_\nu\right|_{\dd U} = 0 .
$$
We understand the last equality in the sense analoguous to Definition \ref{d11}:
\begin{equation}
\label{A1}
\int_U \< u, \n \ze \> dx_1 dx_2 
= - \int_U \div u\, \overline \ze\, dx_1 dx_2 \qquad \forall \ \ze \in W_2^1(U) .
\end{equation}
Then there is a sequence of functions $v^{(n)} \in C_0^\infty (U, \C^2)$ such that
$$
v^{(n)} \to u \ \ \text{in} \ \ L_2(U, \C^2) \qquad \text{and} \qquad 
\div v^{(n)} \to \div u\ \  \text{in}\ \  L_2(U) .
$$
\end{lemma}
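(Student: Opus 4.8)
The plan is to repeat the functional-analytic argument used in the proof of Lemma \ref{Htau}, with the operation $\rot$ replaced by $\div$. I would consider the operator $\div_0$, acting as the differential operation $\div$ from $L_2(U,\C^2)$ to $L_2(U)$ on the domain $\dom \div_0 = C_0^\infty(U, \C^2)$. This domain is dense, so the operator is densely defined, and the first step is to compute its adjoint. For $v \in C_0^\infty(U,\C^2)$ and $\ze \in L_2(U)$ the identity $\int_U \div v\, \overline\ze\, dx_1 dx_2 = -\int_U \<v, \n\ze\>\,dx_1 dx_2$ holds in the distributional sense, so $\ze$ lies in $\dom(\div_0)^*$ exactly when $\n\ze \in L_2(U,\C^2)$, that is, when $\ze \in W_2^1(U)$. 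Hence $(\div_0)^* = -\n$ with $\dom(\div_0)^* = W_2^1(U)$.

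Because $W_2^1(U)$ is dense in $L_2(U)$, the operator $\div_0$ is closable and its closure equals the second adjoint, $\overline{\div_0} = (\div_0)^{**} = (-\n)^*$. The key step is then to identify the domain of $(-\n)^*$ with the space described in the statement. By the definition of the adjoint, $u$ belongs to $\dom(-\n)^*$ precisely when there is a $w \in L_2(U)$ with $-\int_U \<u, \n\ze\>\,dx_1 dx_2 = \int_U w\,\overline\ze\,dx_1 dx_2$ for all $\ze \in W_2^1(U)$; comparison with \eqref{A1} shows that this holds if and only if $\div u \in L_2(U)$, with $w = \div u$, and $\left.u_\nu\right|_{\dd U} = 0$. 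On this domain $(-\n)^*$ acts as $\div$. Thus $\overline{\div_0}$ is exactly the operation $\div$ restricted to the class of fields appearing in the lemma.

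Since the domain of a closed operator, equipped with the graph norm, is by construction the completion of the domain of the operator it closes, every $u$ with $\div u \in L_2(U)$ and $\left.u_\nu\right|_{\dd U} = 0$ is a graph-norm limit of a sequence $v^{(n)} \in C_0^\infty(U,\C^2)$. This yields simultaneously $v^{(n)} \to u$ in $L_2(U,\C^2)$ and $\div v^{(n)} \to \div u$ in $L_2(U)$, which is the assertion.

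The main obstacle I anticipate is the middle step: verifying that the abstractly defined domain of $(-\n)^*$ coincides with the concrete space cut out by the integral identity \eqref{A1}, including that the vanishing-normal-component condition is captured correctly and that no distributional mass is hidden on $\dd U$. Everything else is formal adjoint bookkeeping. A pleasant feature of this route is that it never invokes a normal-trace theorem, and therefore needs no regularity of $\dd U$ beyond what is already assumed. A more hands-on alternative — extending $u$ by zero to $\R^2$ (the identity \eqref{A1} guarantees the divergence of the extension stays in $L_2$ with no surface term), then mollifying and rescaling the support into $U$ — would reprove the same fact, but would run into the familiar technical difficulty of pushing supports inward for a merely Lipschitz domain.
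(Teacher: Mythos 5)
Your proof is correct and is essentially the same argument as the paper's: you consider $\div_0$ on $C_0^\infty(U,\C^2)$, identify its adjoint as $-\n$ on $W_2^1(U)$, conclude $\overline{\div_0}=(-\n)^*$, and observe that the domain of $(-\n)^*$ is exactly the space cut out by \eqref{A1}, so graph-norm density of $C_0^\infty(U,\C^2)$ gives the claim. The paper states this more tersely, while you spell out the adjoint computation and the identification of $\dom(-\n)^*$ — the step the paper compresses into ``the last operator is described just by the formula \eqref{A1}.''
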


\begin{proof}
Let us consider the operator 
$\div_0 : L_2(U, \C^2) \to L_2(U)$ defined on 
$$
\dom \div_0 = C_0^\infty (U, \C^2)
$$ and acting as divergence,
$\div_0 v = \div v$.
This operator is densely defined and not closed.
Its adjoint operator is the operator nabla
$$
(\div_0)^* =- \n : L_2(U) \to L_2(U, \C^2)
$$
defined on $\dom \n = W_2^1(U)$.
Therefore, the closure of the operator $\div_0$ is the operator adjoint to nabla,
$\overline{\div_0} = -\n^* = \div$.
The last operator is described just by the formula \eqref{A1}. 
So, any function $u \in \dom \n^* = \dom \overline{\div_0}$ can be approximated by the functions from 
$C_0^\infty (U, \C^2)$ in the graph norm of the operator $\div$.
\end{proof}

\begin{lemma}
\label{l242}
Let $\eta \in C_0^\infty (\R)$.
Then the vector-functions
$$
\left( \begin{array}{cc} \dd_1 \ph_k (x_1, x_2) \eta (x_3) \\ 
\dd_2 \ph_k (x_1, x_2) \eta (x_3) \\ 
0 \end{array} \right) \quad
\left( \begin{array}{cc} \dd_2 \psi_l (x_1, x_2) \eta (x_3) \\ 
-\dd_1 \psi_l (x_1, x_2) \eta (x_3) \\ 
0 \end{array} \right), 
$$
$$
\left( \begin{array}{cc} \dd_1 \oo_j (x_1, x_2) \eta (x_3) \\ 
\dd_2 \oo_j (x_1, x_2) \eta (x_3) \\ 
0 \end{array} \right), \quad
\left( \begin{array}{cc} 0 \\ 0 \\ 
\ph_m (x_1, x_2) \eta (x_3) \end{array} \right)
$$
belong to the space $H(\rot, \tau)$, and
\begin{equation}
\label{A2}
\rot \left( \begin{array}{cc} \dd_1 \ph_k (x_1, x_2) \eta (x_3) \\ 
\dd_2 \ph_k (x_1, x_2) \eta (x_3) \\ 
0 \end{array} \right) =
\left( \begin{array}{cc} -\dd_2 \ph_k (x_1, x_2) \eta' (x_3) \\ 
\dd_1 \ph_k (x_1, x_2) \eta' (x_3) \\ 
0 \end{array} \right),
\end{equation}
\begin{equation}
\label{A3}
\rot \left( \begin{array}{cc} \dd_2 \psi_l (x_1, x_2) \eta (x_3) \\ 
-\dd_1 \psi_l (x_1, x_2) \eta (x_3) \\ 
0 \end{array} \right) =
\left( \begin{array}{cc} \dd_1 \psi_l (x_1, x_2) \eta' (x_3) \\ 
\dd_2 \psi_l (x_1, x_2) \eta' (x_3) \\ 
\ka_l \psi_l(x_1,x_2) \eta(x_3) \end{array} \right),
\end{equation}
\begin{equation}
\label{A4}
\rot \left( \begin{array}{cc} \dd_1 \oo_j (x_1, x_2) \eta (x_3) \\ 
\dd_2 \oo_j (x_1, x_2) \eta (x_3) \\ 
0 \end{array} \right) =
\left( \begin{array}{cc} - \dd_2 \oo_j (x_1, x_2) \eta' (x_3) \\ 
\dd_1 \oo_j (x_1, x_2) \eta' (x_3) \\ 
0 \end{array} \right),
\end{equation}
\begin{equation}
\label{A5}
\rot \left( \begin{array}{cc} 0 \\ 0 \\ 
\ph_m (x_1, x_2) \eta (x_3) \end{array} \right) =
\left( \begin{array}{cc} \dd_2 \ph_m (x_1, x_2) \eta (x_3) \\ 
-\dd_1 \ph_m (x_1, x_2) \eta (x_3) \\ 
0 \end{array} \right).
\end{equation}
\end{lemma}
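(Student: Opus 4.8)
The plan is to prove both assertions simultaneously by approximation. By Lemma \ref{Htau} the space $H(\rot,\tau)$ is the closure of $C_0^\infty(\Pi,\C^3)$ in the graph norm of $\rot$; hence for each of the four families it suffices to exhibit a sequence $u^{(n)}\in C_0^\infty(\Pi,\C^3)$ with $u^{(n)}\to u$ and $\rot u^{(n)}\to$ (the stated right-hand side) in $L_2(\Pi)$. Since $H(\rot,\tau)$ is closed and $\rot$ is a closed operator, such a sequence yields at once $u\in H(\rot,\tau)$ and the curl formulas \eqref{A2}--\eqref{A5}.

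I would first record the curl identities by a direct computation, applied to the smooth approximants. Writing $\rot u=(\dd_2u_3-\dd_3u_2,\dd_3u_1-\dd_1u_3,\dd_1u_2-\dd_2u_1)$ and using that the cross-sectional factors do not depend on $x_3$ while $\eta=\eta(x_3)$, the first two components are $\eta'$ times a planar gradient and the third is $\eta$ times a planar expression. For the $\ph_k$- and $\oo_j$-families that expression is $(\dd_1\dd_2-\dd_2\dd_1)=0$, giving \eqref{A2}, \eqref{A4}; for the $\psi_l$-family it is $-(\dd_1^2+\dd_2^2)\psi_l=-\D\psi_l=\ka_l\psi_l$, giving \eqref{A3}; the fourth family gives \eqref{A5} directly. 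Membership of $u$ and of each right-hand side in $L_2(\Pi,\C^3)$ is clear, since the gradients lie in $L_2(U)$ and $\eta,\eta'\in C_0^\infty(\R)$. The two Dirichlet families are then easy: as $\ph_k,\ph_m\in\mathring W_2^1(U)$, pick $\ph^{(n)}\in C_0^\infty(U)$ converging in $W_2^1(U)$, multiply the corresponding pattern by $\eta$, and read off $L_2$-convergence of the fields and of their curls from the formulas above.

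The crux is the $\psi_l$- and $\oo_j$-families, where $\psi_l$ and $\oo_j$ do not vanish on $\dd U$, so no direct $C_0^\infty(U)$ approximation of the planar part $w$ is available. Here the idea is to convert the tangential $\rot$-condition into a normal divergence condition by the $90^\circ$ rotation $R(w_1,w_2)=(-w_2,w_1)$ and then to apply Lemma \ref{l241}. For the $\psi_l$-family $Rw=\n\psi_l$, so $\div(Rw)=\D\psi_l=-\ka_l\psi_l\in L_2(U)$, and the normal condition $(Rw)_\nu|_{\dd U}=0$ in the sense \eqref{A1} is exactly the weak Neumann eigenvalue identity $\int_U\<\n\psi_l,\n\ze\>\,dx=\ka_l\int_U\psi_l\overline\ze\,dx$, valid for all $\ze\in W_2^1(U)$. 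For the $\oo_j$-family $Rw=(-\dd_2\oo_j,\dd_1\oo_j)$, so $\div(Rw)=0$, and $(Rw)_\nu|_{\dd U}=0$ in the sense \eqref{A1} is precisely \eqref{215} of Lemma \ref{l20}. In both cases $Rw$ meets the hypotheses of Lemma \ref{l241}, which furnishes $\tilde v^{(n)}\in C_0^\infty(U,\C^2)$ with $\tilde v^{(n)}\to Rw$ and $\div\tilde v^{(n)}\to\div(Rw)$ in $L_2(U)$.

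Finally I would rotate back and lift. Setting $v^{(n)}=(\tilde v^{(n)}_2,-\tilde v^{(n)}_1)\in C_0^\infty(U,\C^2)$ gives $v^{(n)}\to w$ and $\dd_1v^{(n)}_2-\dd_2v^{(n)}_1=-\div\tilde v^{(n)}\to-\div(Rw)$ in $L_2(U)$, where $-\div(Rw)$ equals $\ka_l\psi_l$ for the $\psi_l$-family and $0$ for the $\oo_j$-family, matching the third components in \eqref{A3}, \eqref{A4}. The fields $u^{(n)}=(v^{(n)}_1\eta,v^{(n)}_2\eta,0)\in C_0^\infty(\Pi,\C^3)$ then satisfy $u^{(n)}\to u$ in $L_2(\Pi)$ and, by the curl computation, $\rot u^{(n)}\to\rot u$ in $L_2(\Pi)$; since $H(\rot,\tau)$ is closed, $u\in H(\rot,\tau)$ with the asserted curl. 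The step needing the most care is the verification that the rotated planar fields satisfy the weak normal condition \eqref{A1}, which for the Neumann and harmonic families is supplied respectively by the weak Neumann equation and by \eqref{215} of Lemma \ref{l20}; this is where the topology of $U$ and the boundary conditions genuinely enter.
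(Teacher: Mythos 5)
Your proof is correct, and for three of the four families it coincides with the paper's own argument: the curl formulas by direct computation on smooth approximants, $C_0^\infty(U)$-approximation of $\ph_k$, $\ph_m$ in $W_2^1(U)$ for the two Dirichlet families, and Lemma \ref{l241} applied to $\n\psi_l$ for the $\psi_l$-family, whose hypotheses ($\div \n\psi_l = \D\psi_l = -\ka_l\psi_l \in L_2(U)$ together with the weak normal condition \eqref{A1}) are, as you note, exactly the weak form of the Neumann eigenvalue problem. The genuine difference is your treatment of the $\oo_j$-family. The paper re-uses the decomposition $\oo_j = \tilde\oo + \hat\oo$ from the proof of Lemma \ref{l20}, with $\tilde\oo \in C^\infty(\overline U)$ constant near each boundary component (so the corresponding cylinder field lies in $C_0^\infty(\Pi,\C^3)$ outright) and $\hat\oo \in \mathring W_2^1(U)$ handled like the Dirichlet case. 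You instead apply Lemma \ref{l241} to the rotated field $(-\dd_2\oo_j,\dd_1\oo_j)$, which is divergence-free, verifying its weak normal condition \eqref{A1} via identity \eqref{215}, and then rotate the approximants back. Your route is more uniform --- the $\psi_l$- and $\oo_j$-families are dispatched by one and the same mechanism, and no further appeal to the structure of ${\cal L}$-functions is needed --- at the modest cost of the extra boundary-condition verification, which \eqref{215} supplies; the paper's route avoids that verification but leans on the (unproved, though standard) decomposition of harmonic functions constant on the boundary components. Both arguments are complete, and your closedness justification (convergence in the graph norm of $\rot$ of fields in $C_0^\infty(\Pi,\C^3)$ yields membership in $H(\rot,\tau)$ and the stated curl) matches the paper's Cauchy-sequence phrasing.
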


\begin{proof}
In order to prove that a function belong to the space $H(\rot,\tau)$ 
it is sufficient to show that it can be approximated in the norm of $H(\rot)$
by the functions from $C_0^\infty (\Pi, \C^3)$.

1. By definition, $\ph_k \in \mathring W_2^1 (U)$.
Thus, there is a sequence $\ph^{(n)} \in C_0^\infty(U)$ such that 
$\ph^{(n)} \to \ph_k$ in $W_2^1(U)$.
By direct calculation, we have
$$
\rot \left( \begin{array}{cc} \dd_1 \ph^{(n)} (x_1, x_2) \eta (x_3) \\ 
\dd_2 \ph^{(n)} (x_1, x_2) \eta (x_3) \\ 
0 \end{array} \right) =
\left( \begin{array}{cc} -\dd_2 \ph^{(n)} (x_1, x_2) \eta' (x_3) \\ 
\dd_1 \ph^{(n)} (x_1, x_2) \eta' (x_3) \\ 
0 \end{array} \right) .
$$
Therefore, the sequence 
$\left\{  \left( \begin{array}{cc} \dd_1 \ph^{(n)} (x_1, x_2) \eta (x_3) \\ 
\dd_2 \ph^{(n)} (x_1, x_2) \eta (x_3) \\ 
0 \end{array} \right) \right\}$
is a Cauchy sequence in the space $H(\rot)$.
This implies that
$$
 \left( \begin{array}{cc} \dd_1 \ph^{(n)} (x_1, x_2) \eta (x_3) \\ 
\dd_2 \ph^{(n)} (x_1, x_2) \eta (x_3) \\ 
0 \end{array} \right)
\to 
\left( \begin{array}{cc} \dd_1 \ph_k (x_1, x_2) \eta (x_3) \\ 
\dd_2 \ph_k (x_1, x_2) \eta (x_3) \\ 
0 \end{array} \right)
\qquad \text{in} \quad H(\rot),
$$
and \eqref{A2} is fulfilled.

2. Let $l\ge 2$.
By virtue of Lemma \ref{l241}
there is a sequence $v^{(n)} \in C_0^\infty (U, \C^2)$ such that
$$
v^{(n)} \to \n \psi_l \ \ \text{in} \ \ L_2(U, \C^2) \qquad \text{and} \qquad 
\div v^{(n)} \to \D \psi_l\ \  \text{in}\ \  L_2(U) .
$$
We have
$$
\rot \left( \begin{array}{cc} v_2^{(n)} (x_1, x_2) \eta (x_3) \\ 
-v_1^{(n)} (x_1, x_2) \eta (x_3) \\ 
0 \end{array} \right) =
\left( \begin{array}{cc} v_1^{(n)} (x_1, x_2) \eta' (x_3) \\ 
v_2^{(n)} (x_1, x_2) \eta' (x_3) \\ 
- \div v^{(n)} (x_1, x_2) \eta(x_3) \end{array} \right).
$$
Therefore, the sequence 
$ \left( \begin{array}{cc} v_2^{(n)} (x_1, x_2) \eta (x_3) \\ 
-v_1^{(n)} (x_1, x_2) \eta (x_3) \\ 
0 \end{array} \right)$
is a Cauchy sequence in $H(\rot)$.
This implies that it converges to 
$\left( \begin{array}{cc} \dd_2 \psi_l (x_1, x_2) \eta (x_3) \\ 
-\dd_1 \psi_l (x_1, x_2) \eta (x_3) \\ 
0 \end{array} \right)$ in $H(\rot)$,
and \eqref{A3} is fulfilled.

3. Represent a function $\oo_j$ as a sum $\oo_j = \tilde \oo + \hat \oo$,
where $\tilde \oo \in C^\infty (\overline U)$ 
and $\tilde \oo$ is constant in a neighbourhood of each component $\G_i$ of the boundary,
$i = 1, \dots, N$;
and $\hat \oo \in \mathring W_2^1 (U)$.
Vector-function
$\left( \begin{array}{cc} \dd_1 \tilde \oo (x_1, x_2) \eta (x_3) \\ 
\dd_2 \tilde \oo (x_1, x_2) \eta (x_3) \\ 
0 \end{array} \right) 
\in C_0^\infty (\Pi, \C^3)$
by construction.
Vector-function
$\left( \begin{array}{cc} \dd_1 \hat \oo (x_1, x_2) \eta (x_3) \\ 
\dd_2 \hat \oo (x_1, x_2) \eta (x_3) \\ 
0 \end{array} \right)$
can be approximated in the same way as we did at the first step.
Therefore, \eqref{A4} is fulfilled.

4. Again, let  $\ph^{(n)} \in C_0^\infty(U)$,
$\ph^{(n)} \to \ph_m$ in $W_2^1(U)$.
We have
$$
\rot \left( \begin{array}{cc} 0 \\ 0 \\ 
\ph^{(n)} (x_1, x_2) \eta (x_3) \end{array} \right) =
\left( \begin{array}{cc} \dd_2 \ph^{(n)} (x_1, x_2) \eta (x_3) \\ 
-\dd_1 \ph^{(n)} (x_1, x_2) \eta (x_3) \\ 
0 \end{array} \right).
$$
The sequence 
$\left( \begin{array}{cc} 0 \\ 0 \\ 
\ph^{(n)} (x_1, x_2) \eta (x_3) \end{array} \right)$
is a Cauchy sequence in $H(\rot)$.
Therefore, it converges to $\left( \begin{array}{cc} 0 \\ 0 \\ 
\ph_m (x_1, x_2) \eta (x_3) \end{array} \right)$
in $H(\rot)$, and \eqref{A5} is fulfilled.
\end{proof}

\begin{cor}
\label{c243}
The claim of the precedent Lemma remains valid if we change the condition
$\eta \in C_0^\infty (\R)$ by the condition $\eta \in W_2^1(\R)$ in the first three cases,
and by $\eta \in L_2(\R)$ in the last case.
\end{cor}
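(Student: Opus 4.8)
The plan is to observe that in each of the four cases the vector-function depends linearly on $\eta$, and that this linear map is bounded from the relevant function space into $H(\rot)$. Since Lemma~\ref{l242} already places the images of $C_0^\infty(\R)$ inside $H(\rot,\tau)$ and verifies the formulas \eqref{A2}--\eqref{A5} there, a routine bounded-extension argument will carry everything over to the larger class of admissible $\eta$.

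First I would compute the $H(\rot)$-norm of each function explicitly, using the normalisation \eqref{basis} and the fact that the variables $x_1,x_2$ and $x_3$ separate. For the first vector-function one has $|u|^2 = |\n\ph_k|^2|\eta|^2$ and, by \eqref{A2}, $|\rot u|^2 = |\n\ph_k|^2|\eta'|^2$, so integrating over the cross-section gives $\|u\|_{H(\rot)}^2 = \la_k\|\eta\|_{W_2^1(\R)}^2$. The $\psi_l$- and $\oo_j$-functions are analogous; the only new feature is the extra term $\ka_l^2\|\psi_l\|_{L_2(U)}^2\|\eta\|_{L_2(\R)}^2$ coming from the third component in \eqref{A3}. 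In all three of these cases the norm is controlled by $\|\eta\|_{W_2^1(\R)}^2$, so $\eta \mapsto u$ is a bounded linear map $W_2^1(\R) \to H(\rot)$.

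The decisive difference appears in the fourth case. Here $|u|^2 = |\ph_m|^2|\eta|^2$, while by \eqref{A5} one has $|\rot u|^2 = |\n\ph_m|^2|\eta|^2$, which involves \emph{no} derivative of $\eta$. Consequently $\|u\|_{H(\rot)}^2 = (1+\la_m)\|\eta\|_{L_2(\R)}^2$, so the corresponding map is bounded already from $L_2(\R)$ into $H(\rot)$. This is exactly why $\eta$ may be taken merely square-integrable in the last case, whereas the first three require $\eta \in W_2^1(\R)$.

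To conclude, I would approximate a given $\eta \in W_2^1(\R)$ (resp. $\eta \in L_2(\R)$) by a sequence $\eta^{(n)} \in C_0^\infty(\R)$ in the $W_2^1$- (resp. $L_2$-) norm. By the norm identities just obtained, the associated vector-functions form a Cauchy sequence in $H(\rot)$; each term lies in $H(\rot,\tau)$ by Lemma~\ref{l242}, and this subspace is closed in $H(\rot)$ by Lemma~\ref{Htau}, so the limit again belongs to $H(\rot,\tau)$. Finally, since $\rot$ is continuous from $H(\rot)$ into $L_2(\Pi,\C^3)$ and the right-hand sides of \eqref{A2}--\eqref{A5} depend continuously on $\eta$ and $\eta'$, the identities pass to the limit and remain valid for the extended class. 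There is no serious obstacle here; the only point needing care is the bookkeeping showing that the $H(\rot)$-norm involves $\eta'$ in the first three cases but not in the fourth, which is precisely what dictates the two different admissible classes for $\eta$.
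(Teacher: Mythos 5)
Your proposal is correct and follows essentially the same route as the paper, whose entire proof is the observation that $C_0^\infty(\R)$ is dense in $W_2^1(\R)$ and in $L_2(\R)$; you have simply made explicit the norm computations showing the maps $\eta \mapsto u$ are bounded from $W_2^1(\R)$ (resp. $L_2(\R)$) into $H(\rot)$, and the closedness of $H(\rot,\tau)$ that the limiting argument requires. The one cosmetic point: closedness of $H(\rot,\tau)$ in $H(\rot)$ is not the statement of Lemma~\ref{Htau} but follows from its proof (where $H(\rot,\tau)$ is identified as the domain of the closed operator $\overline{\rot_0}$), or directly by passing to the limit in the integral identity of Definition~\ref{d11}.
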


\begin{proof}
The set $C_0^\infty (\R)$ is dense in $W_2^1(\R)$ and in $L_2(\R)$.
\end{proof}

\subsection{Space $H(\rot, \tau)$}

\begin{lemma}
\label{l244}
Let a function $E \in C_0^\infty (\Pi, \C^3)$ be expanded in the series \eqref{23}, \eqref{24}.
Then $a_k, b_l, c_m, d_j \in C_0^\infty (\R)$,
\begin{eqnarray}
\label{A6}
\rot E = 
\sum_{k=1}^\infty (c_k - a_k') \left( \begin{array}{cc} \dd_2 \ph_k  \\ 
- \dd_1 \ph_k \\ 
0 \end{array} \right)
+ \sum_{l=2}^\infty b_l' \left( \begin{array}{cc} \dd_1 \psi_l \\ 
\dd_2 \psi_l \\ 
0 \end{array} \right) \\
+ \sum_{l=2}^\infty \ka_l b_l \left( \begin{array}{cc} 0 \\ 0 \\ 
\psi_l \end{array} \right)
- \sum_{j=1}^{N-1} d_j' \left( \begin{array}{cc}  \dd_2 \oo_j \\ 
- \dd_1 \oo_j\\ 
0 \end{array} \right),
\nonumber
\end{eqnarray}
and
\begin{equation}
\label{A7}
\int_\R \left(\sum_{k=1}^\infty \la_k |c_k-a_k'|^2 + \sum_{l=2}^\infty \left(\ka_l |b_l'|^2 + \ka_l^2 |b_l|^2\right)\right) dx_3 < \infty .
\end{equation}
\end{lemma}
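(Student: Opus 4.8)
The plan is to compute $\rot E$ classically (legitimate, since $E\in C_0^\infty(\Pi,\C^3)$) and then expand each component of the result directly in the appropriate orthogonal basis, reading off the coefficients by a single integration by parts in the cross-section together with a differentiation under the integral sign in $x_3$. First I would settle the regularity of the coefficients. Since the systems in Lemma \ref{l21} are orthogonal, the normalisations \eqref{basis} express the coefficients of $E$ as cross-sectional scalar products, e.g. $\la_k a_k(x_3)=\int_U(E_1\dd_1\ph_k+E_2\dd_2\ph_k)\,dx_1dx_2$, $c_m(x_3)=\int_U E_3\ph_m\,dx_1dx_2$, and analogously for $b_l,d_j$. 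As $E$ is smooth with support compact in $\Pi$, each such integral is a smooth function of $x_3$ that vanishes for $|x_3|$ large; differentiating under the integral sign gives $a_k,b_l,c_m,d_j\in C_0^\infty(\R)$ together with $\la_k a_k'=\int_U(\dd_3E_1\,\dd_1\ph_k+\dd_3E_2\,\dd_2\ph_k)$, and the like.

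Next, formula \eqref{A6}. Writing $(\rot E)_1=\dd_2E_3-\dd_3E_2$, $(\rot E)_2=\dd_3E_1-\dd_1E_3$, $(\rot E)_3=\dd_1E_2-\dd_2E_1$, I would split the transverse part as $\binom{(\rot E)_1}{(\rot E)_2}=\binom{\dd_2E_3}{-\dd_1E_3}+\dd_3\binom{-E_2}{E_1}$ and expand it in basis \eqref{22}, while expanding the longitudinal part $(\rot E)_3$ in $\{\psi_m\}$; these live in orthogonal components, so the two expansions are independent. Each coefficient is obtained by one integration by parts and by pulling $\dd_3$ out of the cross-sectional integral. Testing against $\binom{\dd_2\ph_k}{-\dd_1\ph_k}$ gives $\int_U\langle\n E_3,\n\ph_k\rangle-\dd_3\int_U\langle (E_1,E_2),\n\ph_k\rangle=\la_k c_k-\la_k a_k'$, hence the coefficient $c_k-a_k'$; testing against $\binom{\dd_1\psi_l}{\dd_2\psi_l}$, the $E_3$-contribution $\int_U(\dd_2E_3\,\dd_1\psi_l-\dd_1E_3\,\dd_2\psi_l)$ vanishes by \eqref{21} (with $\ph=E_3\in\mathring W_2^1(U)$) and the remainder equals $\dd_3(\ka_l b_l)=\ka_l b_l'$; testing against $\binom{\dd_2\oo_j}{-\dd_1\oo_j}$, the $E_3$-term drops because $\oo_j$ is harmonic (the relation $\int_U\langle\n E_3,\n\oo_j\rangle=0$ from the completeness proof of Lemma \ref{l21}), leaving $-d_j'$; finally $\int_U(\dd_1E_2-\dd_2E_1)\psi_m=\ka_m b_m$ for $m\ge2$ and $0$ for $m=1$ (as $\psi_1$ is constant). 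Dividing by the squared norms $\la_k,\ka_l,1$ from \eqref{basis} and collecting terms reproduces \eqref{A6}.

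The hard part is precisely controlling these cancellations: that the longitudinal field $E_3$ does not contaminate the $\psi_l$- and $\oo_j$-transverse modes, and that the transverse field contributes to the $\ph_k$-mode only through $a_k'$. All of them are instances of the orthogonality relations already isolated in Lemma \ref{l20} and in the proof of Lemma \ref{l21}/Corollary \ref{c22}, so no new analytic input is needed; one only has to track the $90^\circ$ rotations and the normalisations \eqref{basis}. Working with the scalar products of the classically computed $\rot E$ is what keeps this clean: one could instead apply the $\rot$-formulas of Lemma \ref{l242} termwise, but that would require a separate justification of term-by-term differentiation of the series, which the present argument sidesteps.

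Finally, \eqref{A7} is immediate once \eqref{A6} is established. Its right-hand side is an orthogonal expansion of $\rot E$ with respect to the orthogonal system obtained by tensoring basis \eqref{22} (in the transverse components) and $\{\psi_l\}$ (in the longitudinal component) with $L_2(\R)$. Hence, by Parseval and \eqref{basis}, $\|\rot E\|_{L_2(\Pi,\C^3)}^2=\int_\R\big(\sum_k\la_k|c_k-a_k'|^2+\sum_l(\ka_l|b_l'|^2+\ka_l^2|b_l|^2)+\sum_j|d_j'|^2\big)dx_3$. Since $E\in C_0^\infty(\Pi,\C^3)$ forces $\rot E\in L_2(\Pi,\C^3)$, this quantity is finite; discarding the finite $d_j$-sum yields \eqref{A7}.
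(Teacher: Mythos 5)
Your proposal is correct and follows essentially the same route as the paper: the paper likewise computes $\rot E$ classically, reads off its coefficients in the basis \eqref{22} and $\{\psi_m\}$ by cross-sectional integration by parts (using the orthogonality identity \eqref{21} to kill the $E_3$-contributions and pulling $d/dx_3$ outside the cross-sectional integrals), and obtains \eqref{A7} from \eqref{A6} by the Parseval identity for $\|\rot E\|_{L_2(\Pi)}^2$. Your splitting of the transverse part of $\rot E$ into the $\n E_3$-part and the $\dd_3$-part is just a repackaging of the same computation.
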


\begin{proof}
The functions $a_k, b_l, c_m, d_j$ are the Fourier coefficients of $E\in C_0^\infty(\Pi, \C^3)$, therefore,
$a_k, b_l, c_m, d_j \in C_0^\infty (\R)$.

Expand the field $\rot E$ in the series \eqref{25}, \eqref{26},
denoting the corresponding coefficients by $e_l^{(\rot E)}$, $f_k^{(\rot E)}$, $g_m^{(\rot E)}$, $h_j^{(\rot E)}$.

Fix $l \ge 2$.
Then
\begin{eqnarray*}
\ka_l \,e_l^{(\rot E)} (x_3) = 
\int_U \left\< \rot E, \left( \begin{array}{cc} \dd_1 \psi_l \\ 
\dd_2 \psi_l \\ 0 \end{array} \right) \right\> dx_1 dx_2 \\
= \int_U \left((\dd_2 E_3 - \dd_3 E_2) \dd_1 \overline{\psi_l} 
+ (\dd_3 E_1 - \dd_1 E_3) \dd_2 \overline{\psi_l}\right) dx_1 dx_2 .
\end{eqnarray*}
Next,
$$
\int_U \left(\dd_2 E_3 \dd_1 \overline{\psi_l} - \dd_1 E_3 \dd_2 \overline{\psi_l}\right) dx_1 dx_2 = 0,
$$
so,
\begin{eqnarray*}
\ka_l \,e_l^{(\rot E)} (x_3) = 
 \int_U \left( - \dd_3 E_2 \dd_1 \overline{\psi_l} + \dd_3 E_1 \dd_2 \overline{\psi_l}\right) dx_1 dx_2 \\
= \frac{d}{dx_3}  \int_U \left(E_1 \dd_2 \overline{\psi_l} - E_2 \dd_1 \overline{\psi_l}\right) dx_1 dx_2
= \ka_l \,b_l' (x_3).
\end{eqnarray*}
Thus, 
$e_l^{(\rot E)} (x_3) = b_l' (x_3)$.

Let $m\in \N$.
Then
\begin{eqnarray*}
g_m^{(\rot E)} (x_3) =
\int_U (\rot E)_3 (x) \overline{\psi_m(x_1,x_2)}\, dx_1 dx_2 \\
= \int_U (\dd_1 E_2 - \dd_2 E_1) \overline{\psi_m}\, dx_1 dx_2 
= \int_U \left(E_1 \dd_2 \overline{\psi_m} - E_2 \dd_1 \overline{\psi_m}\right) dx_1 dx_2 
= \ka_l b_l (x_3) .
\end{eqnarray*}

Let $k\in \N$.
We have
\begin{eqnarray*}
\la_k \,f_k^{(\rot E)} (x_3) = 
\int_U \left\< \rot E, \left( \begin{array}{cc} \dd_2 \ph_k \\ 
- \dd_1 \ph_k \\ 0 \end{array} \right) \right\> dx \\
= \int_U \left((\dd_2 E_3 - \dd_3 E_2) \dd_2 \overline{\ph_k} 
- (\dd_3 E_1 - \dd_1 E_3) \dd_1 \overline{\ph_k}\right) dx_1 dx_2 ;
\end{eqnarray*}
$$
\int_U \left(\dd_2 E_3 \dd_2 \overline{\ph_k} + \dd_1 E_3 \dd_1 \overline{\ph_k}\right) dx_1 dx_2 = 
- \int_U E_3\, \overline{\D \ph_k}\, dx_1 dx_2
= \la_k \int_U E_3 \,\overline{\ph_k}\, dx_1 dx_2 = \la_k c_k (x_3);
$$
$$
\int_U \left(- \dd_3 E_2 \dd_2 \overline{\ph_k} 
- \dd_3 E_1 \dd_1 \overline{\ph_k}\right) dx_1 dx_2 
= - \frac{d}{dx_3} \int_U \left(E_1 \dd_1 \overline{\ph_k} + E_2 \dd_2 \overline{\ph_k}\right) dx_1 dx_2 
= - \la_k a_k' (x_3) .
$$
Thus,
$$
f_k^{(\rot E)} (x_3) = - a_k' (x_3) + c_k (x_3).
$$

Finally, let $j \in \{1, \dots, N-1\}$.
Then as above
\begin{eqnarray*}
h_j^{(\rot E)} (x_3) = 
\int_U \left\< \rot E, \left( \begin{array}{cc} \dd_2 \oo_j \\ 
- \dd_1 \oo_j \\ 0 \end{array} \right) \right\> dx \\
= - \int_U E_3\, \overline{\D \oo_j}\, dx_1 dx_2
- \frac{d}{dx_3} \int_U \left(E_1 \dd_1 \overline{\oo_j} + E_2 \dd_2 \overline{\oo_j}\right) dx_1 dx_2 
= - d_j'(x_3)
\end{eqnarray*}
because $\D \oo_j \equiv 0$.

We have established \eqref{A6}.
It yields the convergence \eqref{A7}.
\end{proof}

\begin{theorem}
\label{t25}
The identity
\begin{eqnarray}
\nonumber
I_{el} H(\rot,\tau) 
= \left\{ \{a_k, b_l, c_m, d_j\} \in {\cal H}_{el} :
a_k, b_l, d_j \in W_2^1(\R), \right.\\
\left. \int_\R \left(\sum_{k=1}^\infty \la_k |c_k-a_k'|^2 + \sum_{l=2}^\infty \left(\ka_l |b_l'|^2 + \ka_l^2 |b_l|^2\right)\right) dx_3 < \infty\right\} 
\label{210}
\end{eqnarray}
holds.
If the field $E \in H(\rot, \tau)$ is expanded in the series \eqref{23}, \eqref{24},
then \eqref{A6} is fulfilled.
\end{theorem}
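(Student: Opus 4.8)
The plan is to prove the set identity \eqref{210} by establishing two inclusions, obtaining the expansion \eqref{A6} for a general $E \in H(\rot,\tau)$ as a by-product of the first inclusion. Throughout I use that $I_{el}$ is an isometric isomorphism, so that convergence of a field in $L_2(\Pi,\C^3;\er\,dx)$ is equivalent to convergence of its coefficients in ${\cal H}_{el}$; that $\er$ is bounded and bounded away from zero by \eqref{01}; and that $H(\rot,\tau)$ is a \emph{closed} subspace of $H(\rot)$ (the integral identity defining $u_\tau|_{\dd\Pi}=0$ in Definition \ref{d11} passes to $H(\rot)$-limits, which is also implicit in Lemma \ref{Htau}).

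\textbf{First inclusion: $I_{el}H(\rot,\tau)$ is contained in the right-hand side of \eqref{210}.} Given $E \in H(\rot,\tau)$, Lemma \ref{Htau} furnishes $E^{(n)} \in C_0^\infty(\Pi,\C^3)$ with $E^{(n)} \to E$ in $H(\rot)$, i.e. both $E^{(n)} \to E$ and $\rot E^{(n)} \to \rot E$ in $L_2$. Denote by $a_k^{(n)}, b_l^{(n)}, c_m^{(n)}, d_j^{(n)}$ the coefficients of $E^{(n)}$ from \eqref{23}, \eqref{24}; Lemma \ref{l244} gives \eqref{A6} and the bound \eqref{A7} for each $n$. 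Since $I_{el}$ is isometric, fixing any index shows $a_k^{(n)} \to a_k$, $b_l^{(n)} \to b_l$, $c_m^{(n)} \to c_m$, $d_j^{(n)} \to d_j$ in $L_2(\R)$. Expanding the convergent fields $\rot E^{(n)}$ in the orthogonal basis \eqref{22} and using \eqref{A6}, the coefficients $c_k^{(n)}-(a_k^{(n)})'$, $(b_l^{(n)})'$, $\ka_l b_l^{(n)}$ and $-(d_j^{(n)})'$ converge in the corresponding weighted $L_2(\R)$-norms. In particular $(a_k^{(n)})' = c_k^{(n)} - \bigl(c_k^{(n)}-(a_k^{(n)})'\bigr)$, $(b_l^{(n)})'$ and $(d_j^{(n)})'$ converge in $L_2(\R)$; since $a_k^{(n)}\to a_k$ (and likewise $b_l,d_j$) in $L_2(\R)$ and differentiation is a closed operator, it follows that $a_k,b_l,d_j \in W_2^1(\R)$ with $(a_k^{(n)})' \to a_k'$, and similarly for $b_l,d_j$. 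Passing to the limit term-by-term then identifies the coefficients of $\rot E$ in the basis \eqref{22} as $c_k-a_k'$, $b_l'$, $\ka_l b_l$ and $-d_j'$; this is \eqref{A6} for $E$ (the second assertion of the theorem), and Parseval's identity for $\rot E \in L_2(\Pi,\C^3)$ then yields the finiteness condition of \eqref{210}. Hence $I_{el}E$ lies in the right-hand side of \eqref{210}.

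\textbf{Second inclusion: the right-hand side of \eqref{210} is contained in $I_{el}H(\rot,\tau)$.} Let $\{a_k,b_l,c_m,d_j\}$ satisfy $a_k,b_l,d_j \in W_2^1(\R)$ together with the finiteness condition, and let $E$ be the field defined by \eqref{23}, \eqref{24}; membership in ${\cal H}_{el}$ guarantees that this series converges in $L_2(\Pi,\C^3)$. Let $E^{(n)}$ be the partial sum containing the first $n$ terms of each series. Each summand is one of the vector-functions of Lemma \ref{l242} with $\eta = a_k, b_l, d_j \in W_2^1(\R)$ or $\eta = c_m \in L_2(\R)$, so by Corollary \ref{c243} every summand lies in $H(\rot,\tau)$ and thus $E^{(n)} \in H(\rot,\tau)$. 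Applying \eqref{A2}--\eqref{A5} and collecting terms, $\rot E^{(n)}$ equals the $n$-th partial sum of the right-hand side of \eqref{A6}. The finiteness condition of \eqref{210} (the sum over $j$ being finite and controlled by $d_j \in W_2^1(\R)$) shows that these partial sums form a Cauchy sequence in $L_2(\Pi,\C^3)$. Therefore $E^{(n)} \to E$ in $H(\rot)$; since $\rot$ is closed, $E \in H(\rot)$ with $\rot E$ given by \eqref{A6}, and since $H(\rot,\tau)$ is closed in $H(\rot)$ we conclude $E \in H(\rot,\tau)$. Hence $I_{el}E$ realises the prescribed element, completing the proof of \eqref{210}.

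The main obstacle is the first inclusion, specifically the passage to the limit: from the single hypothesis $E \in H(\rot,\tau)$ one must extract both the regularity $a_k,b_l,d_j \in W_2^1(\R)$ and the validity of \eqref{A6}. This is where the isometry of $I_{el}$, the Parseval expansion of $\rot E$ in the basis \eqref{22}, and the closedness of weak differentiation have to be combined; the finiteness estimate then follows for free by Parseval once \eqref{A6} is in place. The second inclusion is comparatively routine once Lemma \ref{l242} and Corollary \ref{c243} are available.
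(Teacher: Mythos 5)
Your proposal is correct and follows essentially the same route as the paper: for the inclusion $\subset$ you approximate $E$ by $C_0^\infty$ fields via Lemma \ref{Htau}, apply Lemma \ref{l244} to each approximant, and use the isometry of $I_{el}$ together with closedness of weak differentiation to pass to the limit and obtain $a_k,b_l,d_j\in W_2^1(\R)$, \eqref{A6} and \eqref{A7}; for $\supset$ you use Corollary \ref{c243} on partial sums, the finiteness condition to get Cauchy convergence of their rotors, and the closedness of $H(\rot,\tau)$ in $H(\rot)$ — exactly the paper's argument (the paper phrases the limit passage through Cauchy sequences of coefficients rather than expanding $\rot E$ directly, but this is the same mechanism).
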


\begin{proof}
Let $E \in H(\rot, \tau)$.
By virtue of Lemma \ref{Htau}
there is a sequence $E^{(n)} \in C_0^\infty (\Pi, \C^3)$ such that $E^{(n)} \to E$ in $H(\rot,\tau)$.
Expand the functions $E$ and $E^{(n)}$ in the series \eqref{23}, \eqref{24}.
Denote the corresponding coefficients by $a_k, b_l, c_m, d_j$ and $a_k^{(n)}, b_l^{(n)}, c_m^{(n)}, d_j^{(n)}$.
Clearly, $a_k^{(n)} \to a_k$, $b_l^{(n)} \to b_l$, $c_m^{(n)} \to c_m$ and $d_j^{(n)} \to d_j$ in $L_2(\R)$ as $n \to \infty$.
Moreover,
\begin{eqnarray*}
\int_\R \left(\sum_{k=1}^\infty \la_k \left|c_k^{(n)} - (a_k^{(n)})' - \left(c_k^{(\tilde n)} - (a_k^{(\tilde n)})'\right)\right|^2 \right. \\
\left. + \sum_{l=2}^\infty \left(\ka_l \left|(b_l^{(n)})' - (b_l^{(\tilde n)})'\right|^2 + \ka_l^2 \left|b_l^{(n)} - b_l^{(\tilde n)}\right|^2\right)
+ \sum_{j=1}^{N-1} \left|(d_j^{(n)})' - (d_j^{(\tilde n)})'\right|^2\right) dx_3 \\
= \left\| \rot E^{(n)} - \rot E^{(\tilde n)}\right\|_{L_2(\Pi)}^2 
\mathop{\longrightarrow}\limits_{n, \tilde n \to \infty} 0 .
\end{eqnarray*}
So, the sequences $\{a_k^{(n)}\}$, $\{b_l^{(n)}\}$, $\{d_j^{(n)}\}$ are the Cauchy sequences 
in the space $W_2^1(\R)$.
Therefore, they converge in $W_2^1(\R)$ to $a_k$, $b_l$, and $d_j$ respectively.
As we have $\rot E^{(n)} \to \rot E$ in $L_2(\Pi, \C^3)$,
this implies \eqref{A6} and \eqref{A7} for the function $E$.
Thus, we proved the inclusion $\subset$ in \eqref{210}.
 
Now, let  $\{a_k, b_l, c_m, d_j\} \in {\cal H}_{el}$, $a_k, b_l, d_j \in W_2^1(\R)$ and \eqref{A7} be satisfied.
Define the function $E$ by the formulas \eqref{23}, \eqref{24} with such coefficients.
Any partial sum of this series belong to $H(\rot, \tau)$ and satisfies \eqref{A6} due to Corollary \ref{c243}.
Moreover, the convergence \eqref{A7} yields the convergence of such partial sums in $H(\rot)$.
Therefore, $E \in H(\rot,\tau)$, and we have proved the inclusion $\supset$ in \eqref{210}.
\end{proof}

\subsection{Space $H(\rot)$}

\begin{lemma}
\label{l246}
Let a function $H \in H(\rot)$ be expanded in the series \eqref{25}, \eqref{26}.
Then $e_l, f_k, h_j \in W_2^1 (\R)$,
\begin{eqnarray}
\label{A8}
\rot H = 
\sum_{k=1}^\infty f_k' \left( \begin{array}{cc} \dd_1 \ph_k  \\ 
\dd_2 \ph_k \\ 
0 \end{array} \right)
+ \sum_{l=2}^\infty (g_l-e_l')\left( \begin{array}{cc} \dd_2 \psi_l \\ 
- \dd_1 \psi_l \\ 
0 \end{array} \right) \\
+ \sum_{k=1}^\infty \la_k f_k \left( \begin{array}{cc} 0 \\ 0 \\ 
\ph_k \end{array} \right)
+ \sum_{j=1}^{N-1} h_j' \left( \begin{array}{cc}  \dd_1 \oo_j \\ 
\dd_2 \oo_j\\ 
0 \end{array} \right),
\nonumber
\end{eqnarray}
and
\begin{equation}
\label{A9}
\int_\R \left(\sum_{k=1}^\infty \left(\la_k |f_k'|^2 + \la_k^2 |f_k|^2\right) 
+ \sum_{l=2}^\infty \ka_l |g_l - e_l'|^2 \right) dx_3 < \infty .
\end{equation}
\end{lemma}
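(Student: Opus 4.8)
The plan is to mirror the strategy used for $H(\rot,\tau)$, but to replace the density of $C_0^\infty(\Pi,\C^3)$ (unavailable here, since $H\in H(\rot)$ carries no tangential boundary condition) by a duality argument that reuses the explicit curl formulas \eqref{A2}--\eqref{A5}. Since $H\in H(\rot)$, the field $\rot H$ lies in $L_2(\Pi,\C^3)$; I would first expand it in the $E$-type basis \eqref{23}, \eqref{24}, obtaining coefficients $A_k, B_l, C_m, D_j\in L_2(\R)$ whose weighted sums reproduce $\|\rot H\|_{L_2(\Pi)}^2$. The whole task is then to identify these coefficients with the expressions in \eqref{A8}.

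The key tool is the integral identity of Definition \ref{d11}($\tau$): every $z\in H(\rot,\tau)$ satisfies $z_\tau|_{\dd\Pi}=0$, so testing that identity against the field $H\in H(\rot)$ and conjugating gives $\int_\Pi\<\rot H,z\>\,dx=\int_\Pi\<H,\rot z\>\,dx$ (equivalently, this is the adjointness $\rot|_{H(\rot)}=(\rot_0)^*$, $\rot|_{H(\rot,\tau)}=\overline{\rot_0}$ recorded in the proof of Lemma \ref{Htau}). I would apply this with $z$ taken, in turn, to be each of the four basis fields of Lemma \ref{l242} multiplied by an arbitrary $\eta\in C_0^\infty(\R)$; by Corollary \ref{c243} these products lie in $H(\rot,\tau)$ and their curls are exactly \eqref{A2}--\eqref{A5}. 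On the left the identity extracts, weakly against $\eta$, the relevant cross-sectional Fourier coefficient of $\rot H$; on the right, after substituting the explicit curl and expanding $H$ through \eqref{25}, \eqref{26}, the cross-sectional integrals collapse by the orthogonality of the bases \eqref{20}, \eqref{22} (Lemma \ref{l21}, Corollary \ref{c22}) together with the normalizations \eqref{basis}.

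Carrying this out for the four choices of $z$ gives the four identifications. The choice $z=(0,0,\ph_k\eta)$ with \eqref{A5} yields at once $C_k=\la_k f_k$, with no derivative involved. The three in-plane choices, via \eqref{A2}, \eqref{A3}, \eqref{A4}, produce respectively $\int_\R A_k\overline\eta\,dz=-\int_\R f_k\overline{\eta'}\,dz$, then $\int_\R B_l\overline\eta\,dz=\int_\R e_l\overline{\eta'}\,dz+\int_\R g_l\overline\eta\,dz$, and $\int_\R D_j\overline\eta\,dz=-\int_\R h_j\overline{\eta'}\,dz$, for all $\eta\in C_0^\infty(\R)$. Each of these is precisely the statement that $f_k$ (respectively $e_l$, $h_j$) has a weak derivative in $L_2(\R)$; hence $f_k,e_l,h_j\in W_2^1(\R)$, with $A_k=f_k'$, $B_l=g_l-e_l'$, $D_j=h_j'$. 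Inserting these into the basis expansion of $\rot H$ gives \eqref{A8}, and \eqref{A9} is then merely the finiteness of $\|\rot H\|_{L_2(\Pi)}^2$ read through the weighted basis norms.

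The only delicate point is the justification of the pairing $\int_\Pi\<\rot H,z\>=\int_\Pi\<H,\rot z\>$ for $z\in H(\rot,\tau)$ and $H\in H(\rot)$, which is why I would route it through Definition \ref{d11}($\tau$) or the adjointness in Lemma \ref{Htau}, and the attendant need to admit test functions $\eta$ that are only $W_2^1$ (in the in-plane cases) or $L_2$ (in the $E_3$ case), which is exactly the content of Corollary \ref{c243}. Everything else is routine bookkeeping of conjugations and of which eigenbasis—Dirichlet $\ph_k$ or Neumann $\psi_l$—is in play; in particular the boundary terms that appeared explicitly in the $H(\rot,\tau)$ computation of Lemma \ref{l244} are here absorbed automatically into the known curls \eqref{A2}--\eqref{A5}.
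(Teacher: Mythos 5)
Your proposal is correct and follows essentially the same route as the paper: expand $\rot H$ in the basis \eqref{23}, \eqref{24}, pair it against the test fields of Lemma \ref{l242} multiplied by $\eta\in C_0^\infty(\R)$ via the identity $\int_\Pi\<\rot H,z\>\,dx=\int_\Pi\<H,\rot z\>\,dx$ (Definition \ref{d11}($\tau$)), and read off the coefficient identifications as weak-derivative statements, from which \eqref{A8} and then \eqref{A9} follow. The only cosmetic difference is that you invoke Corollary \ref{c243} to enlarge the class of admissible $\eta$, which is not actually needed here since $\eta\in C_0^\infty(\R)$ suffices throughout.
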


\begin{proof}
Expand the field $\rot H$ in the series \eqref{23}, \eqref{24},
denoting the corresponding coefficients by $a_k^{(\rot H)}$, $b_l^{(\rot H)}$, $c_m^{(\rot H)}$, $d_j^{(\rot H)}$.

Let $k \in \N$, $\eta \in C_0^\infty (\R)$.
Then 
$\left( \begin{array}{cc} \dd_1 \ph_k \eta \\ 
\dd_2 \ph_k \eta \\ 0 \end{array} \right) \in H(\rot, \tau)$
due to Lemma \ref{l242}, and the equality \eqref{A2} is valid.
Therefore,
\begin{eqnarray*}
\la_k \int_\R a_k^{(\rot H)} (x_3) \overline{\eta(x_3)} dx_3
= \int_\Pi \left\< \rot H, \left( \begin{array}{cc} \dd_1 \ph_k \eta \\ 
\dd_2 \ph_k \eta \\ 0 \end{array} \right) \right\> dx \\
=  \int_\Pi \left\< H, \rot \left( \begin{array}{cc} \dd_1 \ph_k \eta \\ 
\dd_2 \ph_k \eta \\ 0 \end{array} \right) \right\> dx
=  \int_\Pi \left\< H, \left( \begin{array}{cc} - \dd_2 \ph_k \eta' \\ 
\dd_1 \ph_k \eta' \\ 0 \end{array} \right) \right\> dx
= - \la_k \int_\R f_k (x_3) \overline{\eta'(x_3)} dx_3 .
\end{eqnarray*}
So, $f_k \in W_2^1(\R)$ and $f_k' = a_k^{(\rot H)}$.

Next,
\begin{eqnarray*}
\int_\R c_k^{(\rot H)} (x_3) \overline{\eta(x_3)} dx_3
= \int_\Pi \left\< \rot H, \left( \begin{array}{cc} 0 \\ 0\\ 
\ph_k \eta \end{array} \right) \right\> dx \\
=  \int_\Pi \left\< H, \rot \left( \begin{array}{cc} 0 \\ 0 \\ 
\ph_k \eta \end{array} \right) \right\> dx
=  \int_\Pi \left\< H, \left( \begin{array}{cc} \dd_2 \ph_k \eta \\ 
- \dd_1 \ph_k \eta \\ 0 \end{array} \right) \right\> dx
= \la_k \int_\R f_k (x_3) \overline{\eta(x_3)} dx_3 .
\end{eqnarray*}
Therefore, $c_k^{(\rot H)} = \la_k f_k$.

Now, let $l\ge 2$.
Using Lemma \ref{l242} again we obtain
\begin{eqnarray*}
\ka_l \int_\R b_l^{(\rot H)} (x_3) \overline{\eta(x_3)} dx_3
= \int_\Pi \left\< \rot H, \left( \begin{array}{cc} \dd_2 \psi_l \eta \\ 
- \dd_1 \psi_l \eta \\ 0 \end{array} \right) \right\> dx 
=  \int_\Pi \left\< H, \rot \left( \begin{array}{cc} \dd_2 \psi_l \eta \\ 
- \dd_1 \psi_l \eta \\ 0 \end{array} \right) \right\> dx \\
=  \int_\Pi \left\< H, \left( \begin{array}{cc}  \dd_1 \psi_l \eta' \\ 
\dd_2 \psi_l \eta' \\ \ka_l \psi_l \eta \end{array} \right) \right\> dx
= \ka_l \int_\R \left(e_l (x_3) \overline{\eta'(x_3)} + g_l (x_3) \overline{\eta(x_3)} \right) dx_3 .
\end{eqnarray*}
So, $e_l \in W_2^1(\R)$ and $b_l^{(\rot H)} = g_l - e_l'$.

Finally, for $j \in \{1, \dots, N-1\}$ we have
\begin{eqnarray*}
\int_\R d_j^{(\rot H)} (x_3) \overline{\eta(x_3)} dx_3
= \int_\Pi \left\< \rot H, \left( \begin{array}{cc} \dd_1 \oo_j \eta \\ 
\dd_2 \oo_j \eta \\ 0 \end{array} \right) \right\> dx \\
=  \int_\Pi \left\< H, \rot \left( \begin{array}{cc} \dd_1 \oo_j \eta \\ 
\dd_2 \oo_j \eta \\ 0 \end{array} \right) \right\> dx
=  \int_\Pi \left\< H, \left( \begin{array}{cc} - \dd_2 \oo_j \eta' \\ 
\dd_1 \oo_j \eta' \\ 0 \end{array} \right) \right\> dx
= - \int_\R h_j (x_3) \overline{\eta'(x_3)} dx_3 .
\end{eqnarray*}
Therefore, $h_j \in W_2^1(\R)$ and $h_j' = d_j^{(\rot H)}$.

Thus, we showed \eqref{A8}.
It implies \eqref{A9}.
\end{proof}

\begin{theorem}
\label{t24}
The identity
\begin{eqnarray}
\nonumber
I_m H(\rot) 
= \left\{ \{e_l, f_k, g_m, h_j\} \in {\cal H}_m :
e_l, f_k, h_j \in W_2^1(\R), \right. \\
\left. \int_\R \left(\sum_{k=1}^\infty \left(\la_k |f_k'|^2 + \la_k^2 |f_k|^2\right) 
+ \sum_{l=2}^\infty \ka_l |g_l - e_l'|^2 \right) dx_3 < \infty\right\}
\label{29}
\end{eqnarray}
holds.
\end{theorem}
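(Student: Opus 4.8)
The statement is the exact analogue for $H(\rot)$ of Theorem \ref{t25}, and I would prove it by the same two-inclusion scheme, noting first that one of the two inclusions is already available. The inclusion $\subset$ in \eqref{29} is nothing but the content of Lemma \ref{l246}: every $H\in H(\rot)$, expanded in \eqref{25}, \eqref{26}, produces coefficients with $e_l,f_k,h_j\in W_2^1(\R)$ for which \eqref{A8} holds and the integral in \eqref{A9} is finite, which is precisely membership of $\{e_l,f_k,g_m,h_j\}$ in the right-hand side of \eqref{29}. So no further work is needed for $\subset$, and the whole task reduces to the reverse inclusion $\supset$.

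For $\supset$ I would argue exactly as in the second half of the proof of Theorem \ref{t25}. Starting from a collection $\{e_l,f_k,g_m,h_j\}\in{\cal H}_m$ with $e_l,f_k,h_j\in W_2^1(\R)$ and with \eqref{A9} finite, define $H$ by the series \eqref{25}, \eqref{26}. The first step is to record the action of $\rot$ on the individual building blocks of the basis \eqref{22}, namely on $e_l(x_3)(\dd_1\psi_l,\dd_2\psi_l,0)^\top$, $f_k(x_3)(\dd_2\ph_k,-\dd_1\ph_k,0)^\top$, $h_j(x_3)(\dd_2\oo_j,-\dd_1\oo_j,0)^\top$ and $g_m(x_3)(0,0,\psi_m)^\top$. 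A direct computation, the rotated analogue of Lemma \ref{l242}, shows that each of these fields lies in $H(\rot)$, with $\rot$ equal respectively to $e_l'\,(-\dd_2\psi_l,\dd_1\psi_l,0)^\top$, to $f_k'\,(\dd_1\ph_k,\dd_2\ph_k,0)^\top+\la_k f_k\,(0,0,\ph_k)^\top$ (using $-\D\ph_k=\la_k\ph_k$), to $h_j'\,(\dd_1\oo_j,\dd_2\oo_j,0)^\top$ (using $\D\oo_j=0$), and to $g_m\,(\dd_2\psi_m,-\dd_1\psi_m,0)^\top$, the last of which vanishes for $m=1$ because $\psi_1\equiv\const$. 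In particular each finite partial sum of \eqref{25}, \eqref{26} lies in $H(\rot)$, and summing these four formulas reproduces \eqref{A8}.

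The last step is the passage to the limit. Since $I_m$ is an isometry and $\{e_l,f_k,g_m,h_j\}\in{\cal H}_m$, the partial sums converge to $H$ in $L_2(\Pi,\C^3)$. By the normalizations \eqref{basis} and the orthogonality in Corollary \ref{c22}, the squared $L_2$-norm of the partial sum of \eqref{A8} equals $\int_\R\bigl(\sum_{k}(\la_k|f_k'|^2+\la_k^2|f_k|^2)+\sum_{l}\ka_l|g_l-e_l'|^2+\sum_{j}|h_j'|^2\bigr)\,dx_3$ over the corresponding index ranges; the first two groups are finite by \eqref{A9} and the third by $h_j\in W_2^1(\R)$. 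Hence the partial sums of $\rot$ of $H$ form a Cauchy sequence in $L_2$, so the partial sums of $H$ are Cauchy in $H(\rot)$; as $\rot$ is closed, their limit $H$ belongs to $H(\rot)$ and satisfies \eqref{A8}. This establishes $\supset$ and completes the identification \eqref{29}.

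I expect the only genuinely delicate point to be the verification, in the second paragraph, that the Neumann-gradient fields $e_l(\dd_1\psi_l,\dd_2\psi_l,0)^\top$ and the harmonic fields $h_j(\dd_2\oo_j,-\dd_1\oo_j,0)^\top$ really lie in $H(\rot)$. Unlike the fields treated in Lemma \ref{l242} these need not satisfy the tangential boundary condition, but $H(\rot)$ imposes no boundary condition, so it suffices to check that the distributional $\rot$ is an $L_2$ field; this follows from the symmetry of the mixed second derivatives of $\psi_l$ and from $\D\oo_j=0$, together with $\psi_l,\oo_j\in W_2^1(U)$ and $e_l,h_j\in W_2^1(\R)$. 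Everything else is the routine convergence bookkeeping already carried out for Theorem \ref{t25}.
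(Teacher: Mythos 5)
Your proof is correct, but your treatment of the inclusion $\supset$ takes a genuinely different route from the paper's. The paper argues by duality: it defines $G$ as the right-hand side of \eqref{A8} and shows
$\int_\Pi \< H, \rot E\> dx = \int_\Pi \<G, E\> dx$ for every $E \in C_0^\infty(\Pi,\C^3)$, using the expansion of $\rot E$ from Lemma \ref{l244} and a one-dimensional integration by parts in $x_3$; by the very definition of the (maximal) operator $\rot$ as the adjoint of $\rot_0$, this immediately gives $H \in H(\rot)$ and $\rot H = G$, with no need to examine individual terms of the series. You instead mirror the paper's proof of Theorem \ref{t25}: you establish a ``rotated'' analogue of Lemma \ref{l242} for the building blocks of the basis \eqref{22} (your four rot formulas are all correct, and your key observation is sound --- since $H(\rot)$ carries no boundary condition, one only needs the distributional rot to be an $L_2$ field, which follows from the symmetry of mixed distributional derivatives, from $-\D\ph_k = \la_k\ph_k$, and from $\D\oo_j = 0$), and then you pass to the limit via the Cauchy property of the partial sums and the closedness of the maximal rot operator. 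What each approach buys: yours is self-contained at the level of the fields themselves and produces \eqref{A8} as an honest $L_2$-limit, at the price of an extra (easy) lemma the paper never states; the paper's duality argument is shorter because Lemma \ref{l244} is already in place, and it transfers all regularity questions to the smooth test field $E$. One cosmetic slip: the orthogonality you need for the norm identity of the partial sums of \eqref{A8} is that of the basis \eqref{20}, i.e.\ Lemma \ref{l21}, not Corollary \ref{c22} (the rot lands in the ``electric'' basis); the normalizations are the same, so nothing changes in the estimate.
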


\begin{proof}
The inclusion $\subset$ is due to Lemma \ref{l246}.

Let $ \{e_l, f_k, g_m, h_j\} \in {\cal H}_m$, $e_l, f_k, h_j \in W_2^1(\R)$ and 
\eqref{A9} be satisfied.
Define the function $H$ by the formulas \eqref{25}, \eqref{26},
and denote by $G$ the right hand side of \eqref{A8}.
We have to prove that $G = \rot H$.

Let the function $E \in C_0^\infty (\Pi, \C^3)$ be expanded in the series \eqref{23}, \eqref{24}.
By virtue of Lemma \ref{l244}
\begin{eqnarray*}
\int_\Pi \left\< H, \rot E \right\> dx
= \int_\R \left( \sum_{k=1}^\infty \la_k f_k (\overline{c_k-a_k'}) 
+ \sum_{l=2}^\infty \ka_l \left( e_l \overline{b_l'} + g_l \overline{b_l} \right) 
- \sum_{j=1}^{N-1} h_j \overline{d_j'} \right) dx_3 \\
= \int_\R \left( \sum_{k=1}^\infty \la_k \left(f_k' \overline{a_k} + f_k \overline{c_k}\right)
+ \sum_{l=2}^\infty \ka_l (g_l - e_l') \overline{b_l}
+ \sum_{j=1}^{N-1} h_j' \overline{d_j} \right) dx_3 
= \int_\Pi \left\<G, E\right\> dx .
\end{eqnarray*}
The second equality here is clear for each summand,
and the series converge due to \eqref{29} and \eqref{A7}.
So,
$$
\int_\Pi \left\< H, \rot E \right\> dx
= \int_\Pi \left\<G, E\right\> dx  \qquad \forall \ \ E \in C_0^\infty (\Pi, \C^3)
$$
which means $H \in H(\rot)$ and $\rot H = G$.
We have proved the inclusion $\supset$ in \eqref{29}.
\end{proof} 

\begin{cor}
\label{c26}
The identities
\begin{eqnarray*}
\nonumber
I_{el} \Phi(\tau, \er) 
= \left\{ \{a_k, b_l, c_m, d_j\} \in {\cal H}_{el} :
a_k, b_l, \er c_m, d_j \in W_2^1(\R), (\er c_m)' = \la_m \er a_m \ \ \forall m \in \N \right.\\
\left. \int_\R \left(\sum_{k=1}^\infty \la_k |c_k-a_k'|^2 + \sum_{l=2}^\infty \left(\ka_l |b_l'|^2 + \ka_l^2 |b_l|^2\right)\right) dx_3 < \infty\right\} 
\end{eqnarray*}
and
\begin{eqnarray*}
I_m \Phi(\nu, \mu) 
= \left\{ \{e_l, f_k, g_m, h_j\} \in {\cal H}_m :
e_l, f_k, \mu g_m, h_j \in W_2^1(\R), g_1 \equiv 0, (\mu g_l)' = \ka_l \mu g_l \ \ \forall l \ge 2 \right. \\
\left. \int_\R \left(\sum_{k=1}^\infty \left(\la_k |f_k'|^2 + \la_k^2 |f_k|^2\right) 
+ \sum_{l=2}^\infty \ka_l |g_l - e_l'|^2 \right) dx_3 < \infty\right\}
\end{eqnarray*}
hold.
\end{cor}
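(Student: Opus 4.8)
The plan is to exploit the fact that both $\Phi(\tau,\er)$ and $\Phi(\nu,\mu)$ are defined as intersections of spaces whose images under the isomorphisms $I_{el}$, $I_m$ have already been computed. Recall that $\Phi(\tau,\er)=H(\rot,\tau)\cap J(\er)$ and $\Phi(\nu,\mu)=H(\rot)\cap J(\nu,\mu)$ by definition. Since $\er$ is bounded above and below by \eqref{01}, the weighted space $L_2(\Pi,\C^3;\er\,dx)$ coincides with $L_2(\Pi,\C^3)$ as a set, with an equivalent norm; hence both $H(\rot,\tau)$ and $J(\er)$ are subsets of the domain of $I_{el}$, and the analogous remark holds for $I_m$.

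First I would invoke the elementary identity $f(A\cap B)=f(A)\cap f(B)$, valid for any bijection $f$. Applied to the isometric isomorphism $I_{el}$ this yields
\[
I_{el}\,\Phi(\tau,\er)=I_{el}\,H(\rot,\tau)\ \cap\ I_{el}\,J(\er),
\]
and applied to $I_m$,
\[
I_m\,\Phi(\nu,\mu)=I_m\,H(\rot)\ \cap\ I_m\,J(\nu,\mu).
\]

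Next I would substitute the explicit descriptions of the four sets on the right-hand sides. The set $I_{el}H(\rot,\tau)$ is furnished by Theorem \ref{t25}, giving the conditions $a_k,b_l,d_j\in W_2^1(\R)$ together with the convergence \eqref{A7}, while $I_{el}J(\er)$ is furnished by \eqref{27}, giving $\er c_m\in W_2^1(\R)$ and $(\er c_m)'=\la_m\er a_m$ for all $m\in\N$. The intersection is simply the conjunction of all these conditions, which is precisely the first displayed formula. For the magnetic part, $I_m H(\rot)$ is furnished by Theorem \ref{t24} (conditions $e_l,f_k,h_j\in W_2^1(\R)$ and \eqref{A9}), and $I_m J(\nu,\mu)$ by \eqref{28} (conditions $g_1\equiv 0$, $\mu g_l\in W_2^1(\R)$, and $(\mu g_l)'=\ka_l\mu e_l$ for $l\ge 2$); their conjunction is the second displayed formula.

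Since every ingredient has already been established, there is no genuine obstacle: the statement is a bookkeeping step recording the intersection of two previously characterized subspaces. The only point deserving a moment's care is confirming that the two systems of conditions are mutually compatible and that nothing is lost in passing to the intersection—in particular that the divergence constraint $(\er c_m)'=\la_m\er a_m$ from \eqref{27} meshes consistently with the requirement $a_m\in W_2^1(\R)$ coming from membership in $H(\rot,\tau)$, which it does since both are merely listed as part of the respective defining sets.
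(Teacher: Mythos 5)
Your proof is correct and is exactly the argument the paper intends: the corollary is stated there without a separate proof precisely because it follows by applying the bijections $I_{el}$ and $I_m$ to the intersections $\Phi(\tau,\er)=H(\rot,\tau)\cap J(\er)$ and $\Phi(\nu,\mu)=H(\rot)\cap J(\nu,\mu)$ and combining Theorem \ref{t25} with \eqref{27}, and Theorem \ref{t24} with \eqref{28}, respectively. Note in passing that your reading of \eqref{28} gives the condition $(\mu g_l)'=\ka_l\mu e_l$, which is what the corollary's statement should say (the paper's printed $(\mu g_l)'=\ka_l\mu g_l$ is a typo).
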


\section{Decomposition of the Maxwell operator}
\subsection{Invariant subspaces}
Let the functions $E \in \Phi(\tau,\er)$ and $H \in \Phi(\nu,\mu)$ be expanded
in the series \eqref{23}, \eqref{24}, \eqref{25}, \eqref{26}. 
Theorem \ref{t25} and Lemma \ref{l246} mean that the action of the Maxwell operator can be written in the following way:
\begin{eqnarray}
\label{31}
{\cal M} 
\left( \begin{array}{cc} \sum_{k=1}^\infty a_k \dd_1 \ph_k 
+ \sum_{l=2}^\infty b_l \dd_2 \psi_l
+\sum_{j=1}^{N-1} d_j \dd_1 \oo_j \\
 \sum_{k=1}^\infty a_k \dd_2 \ph_k 
- \sum_{l=2}^\infty b_l \dd_1 \psi_l
+\sum_{j=1}^{N-1} d_j \dd_2 \oo_j \\
\sum_{m=1}^\infty c_m \ph_m \\
\sum_{l=2}^\infty e_l \dd_1 \psi_l 
+ \sum_{k=1}^\infty f_k \dd_2 \ph_k 
+ \sum_{j=1}^{N-1} h_j \dd_2 \oo_j \\
\sum_{l=2}^\infty e_l \dd_2 \psi_l 
- \sum_{k=1}^\infty f_k \dd_1 \ph_k 
- \sum_{j=1}^{N-1} h_j \dd_1 \oo_j \\
\sum_{m=1}^\infty g_m \psi_m
\end{array} \right) \\
= \left( \begin{array}{cc}
i \er^{-1} \left( \sum_{k=1}^\infty f_k' \dd_1 \ph_k 
+ \sum_{l=2}^\infty (g_l - e_l') \dd_2 \psi_l
+\sum_{j=1}^{N-1} h_j' \dd_1 \oo_j\right) \\
i \er^{-1} \left( \sum_{k=1}^\infty f_k' \dd_2 \ph_k 
- \sum_{l=2}^\infty (g_l - e_l') \dd_1 \psi_l
+\sum_{j=1}^{N-1} h_j' \dd_2 \oo_j\right) \\
i \er^{-1} \sum_{k=1}^\infty \la_k f_k \ph_k \\
-i \mu^{-1} \left( \sum_{l=2}^\infty b_l' \dd_1 \psi_l 
+ \sum_{k=1}^\infty (c_k-a_k') \dd_2 \ph_k 
- \sum_{j=1}^{N-1} d_j' \dd_2 \oo_j\right) \\
-i \mu^{-1} \left( \sum_{l=2}^\infty b_l' \dd_2 \psi_l 
- \sum_{k=1}^\infty (c_k-a_k') \dd_1 \ph_k 
+ \sum_{j=1}^{N-1} d_j' \dd_1 \oo_j\right) \\
-i \mu^{-1} \sum_{l=2}^\infty \ka_l b_l \psi_l
\end{array} \right) .
\nonumber
\end{eqnarray}

Introduce the subspaces
$$
{\cal J}_k^{el} =
\left\{\left( \begin{array}{cc} a(x_3) \dd_1\ph_k (x_1, x_2) \\
a(x_3) \dd_2 \ph_k (x_1, x_2) \\
c(x_3) \ph_k (x_1, x_2) \\
f(x_3) \dd_2 \ph_k (x_1, x_2) \\
- f(x_3) \dd_1 \ph_k (x_1, x_2) \\
0\end{array} \right) , \quad
\begin{array}{cc} 
a, c, f \in L_2(\R) \ \text{such that} \\
\er c \in W_2^1(\R), (\er c)' = \la_k \er a
\end{array} \right\}, \quad k \in \N ,
$$
$$
{\cal J}_l^m = 
\left\{\left( \begin{array}{cc} b(x_3) \dd_2 \psi_l(x_1, x_2) \\
- b(x_3) \dd_1 \psi_l(x_1, x_2) \\
0 \\
e(x_3) \dd_1 \psi_l (x_1, x_2) \\
e(x_3) \dd_2 \psi_l (x_1, x_2) \\
g(x_3) \psi_l(x_1,x_2)
\end{array} \right), \quad
\begin{array}{cc} 
b, e, g \in L_2(\R) \ \text{such that} \\
\mu g \in W_2^1(\R), (\mu g)' = \ka_l \mu e
\end{array} \right\}, \quad l \ge 2,
$$
$$
{\cal J}_j^0 =
\left\{\left( \begin{array}{cc} d(x_3) \dd_1\oo_j (x_1, x_2) \\
d(x_3) \dd_2 \oo_j (x_1, x_2) \\
0 \\
h(x_3) \dd_2 \oo_j (x_1, x_2) \\
- h(x_3) \dd_1 \oo_j (x_1, x_2) \\
0\end{array} \right) , \quad
d, h \in L_2(\R) \right\}, \quad j = 1, \dots, N-1 .
$$

Lemma \ref{l21}, Corollary \ref{c22} and Lemma \ref{l23} yield the identity
$$
{\cal J} = \left(\bigoplus_{k=1}^\infty {\cal J}_k^{el}\right) \bigoplus
\left(\bigoplus_{l=2}^\infty {\cal J}_l^m\right) \bigoplus
\left(\bigoplus_{j=1}^{N-1} {\cal J}_j^0\right) .
$$
The formula \eqref{31} implies that this decomposition reduces the Maxwell operator.
It is natural to identify the spaces ${\cal J}_k^{el}$ with the spaces of vector-function
on the real line
\begin{equation}
\label{318}
\left\{\left( \begin{array}{cc} a\\ c\\ f \end{array} \right) \in L_2(\R, \C^3): (\er c)' = \la_k \er a\right\}
\end{equation}
with the norm
\begin{equation}
\label{319}
\int_\R \left(\la_k \er(z) |a(z)|^2 + \er(z) |c(z)|^2 + \la_k \mu(z) |f(z)|^2\right) dz .
\end{equation}
The part of the operator $\cal M$ in this subspace acts as
\begin{equation}
\label{32}
{\cal M}_k^{el}
= \left( \begin{array}{ccc}
0 & 0 & \frac{i}{\er(z)} \frac{d}{dz} \\
0 & 0 & \frac{i\la_k}{\er(z)} \\
\frac{i}{\mu(z)} \frac{d}{dz} & \frac{- i}{\mu(z)} & 0  \end{array} \right)
\end{equation}
on the domain
\begin{equation}
\label{33}
\dom {\cal M}_k^{el}
= \left\{\left( \begin{array}{cc} a\\ c\\ f \end{array} \right) \in L_2(\R, \C^3):
a, \er c, f \in W_2^1(\R), (\er c)' = \la_k \er a\right\}.
\end{equation}

The spaces ${\cal J}_l^m$ are naturally identified with the spaces of vector-function
on the real line
\begin{equation}
\label{333}
\left\{\left( \begin{array}{cc} b\\ e\\ g \end{array} \right) \in L_2(\R, \C^3): (\mu g)' = \ka_l \mu e\right\}
\end{equation}
with the norm
\begin{equation}
\label{336}
\int_\R \left(\ka_l \er(z) |b(z)|^2 + \ka_l \mu(z) |e(z)|^2 + \mu(z) |g(z)|^2\right) dz .
\end{equation}
The part of the operator $\cal M$ in this subspace acts as
\begin{equation}
\label{34}
{\cal M}_l^m
= \left( \begin{array}{ccc}
0 & - \frac{i}{\er(z)} \frac{d}{dz} & \frac{i}{\er(z)}\\
-\frac{i}{\mu(z)} \frac{d}{dz} & 0 & 0 \\
\frac{- i\ka_l}{\mu(z)} & 0 & 0 \end{array} \right)
\end{equation}
on the domain
\begin{equation}
\label{35}
\dom {\cal M}_l^m
= \left\{\left( \begin{array}{cc} b\\ e\\ g \end{array} \right) \in L_2(\R, \C^3):
b, e ,\mu g \in W_2^1(\R), (\mu g)' = \ka_l \mu e\right\}.
\end{equation}

Finally, we identify the spaces ${\cal J}_j^0$ with the space $L_2(\R, \C^2)$ with the norm
$$
\int_\R \left(\er(z) |d(z)|^2 + \mu(z) |h(z)|^2\right) dz .
$$
The part of the operator $\cal M$ in this subspace acts as
\begin{equation}
\label{36}
{\cal M}_j^0
= \left( \begin{array}{ccc}
0 & \frac{i}{\er(z)} \frac{d}{dz} \\
-\frac{i}{\mu(z)} \frac{d}{dz} & 0 \end{array} \right)
\end{equation}
on the domain
\begin{equation}
\label{37}
\dom {\cal M}_j^0 = W_2^1(\R,\C^2) .
\end{equation}

Thus, we have proved the

\begin{theorem}
\label{t31}
Let $U \subset \R^2$ be a bounded connected domain, $\dd U \in \lip$,
$\Pi = U \times \R$.
Let the coefficients  $\er$, $\mu$ be scalar real measurable functions satisfying \eqref{01} and \eqref{02}.
Then the Maxwell operator ${\cal M}$ is unitarily equivalent to the orthogonal sum 
of matrix differential operators of the first order on the real line
$$
\left(\bigoplus_{k=1}^\infty {\cal M}_k^{el}\right) \bigoplus
\left(\bigoplus_{l=2}^\infty {\cal M}_l^m\right) \bigoplus
\left(\bigoplus_{j=1}^{N-1} {\cal M}_j^0\right) ,
$$
where the operators ${\cal M}_k^{el}$, ${\cal M}_l^m$, ${\cal M}_j^0$
are defined by the formulas \eqref{32}, \eqref{33}, \eqref{34}, \eqref{35}, \eqref{36}, \eqref{37}
in the corresponding ${\cal J}$-spaces.
\end{theorem}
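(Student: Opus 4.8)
The plan is to push the whole problem through the two isometric isomorphisms $I_{el}$ and $I_m$ and to exploit the explicit formula \eqref{31} for the action of ${\cal M}$ in the coefficient picture. Once the fields are written in the bases \eqref{20}, \eqref{22}, the operator becomes a collection of couplings among the scalar coefficient functions $a_k,c_m,f_k$, then $b_l,e_l,g_m$, and finally $d_j,h_j$, and \eqref{31} displays exactly which coefficients are tied together. I would therefore first record that ${\cal M}$ acts blockwise in these coefficients and then organize the orthogonal decomposition accordingly.

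The key structural observation is that the basis function $\ph_k$ simultaneously carries the electric gradient coefficient $a_k$, the longitudinal coefficient $c_k$ through \eqref{24}, and the magnetic rotated-gradient coefficient $f_k$ through \eqref{25}; likewise $\psi_l$ ties $b_l$, $e_l$, $g_l$, and each harmonic field $\oo_j$ ties $d_j$ with $h_j$. Using Lemma \ref{l21}, Corollary \ref{c22} and Lemma \ref{l23} I would verify that ${\cal J}$ splits orthogonally into the subspaces ${\cal J}_k^{el}$, ${\cal J}_l^m$, ${\cal J}_j^0$. Reading off the right-hand side of \eqref{31} mode by mode then shows that ${\cal M}$ maps each of these subspaces into itself: for a field supported on the single index $k$ the output again involves only $\dd_1\ph_k,\dd_2\ph_k,\ph_k$ in the electric slots and $\dd_2\ph_k,-\dd_1\ph_k$ in the magnetic slots, i.e.\ it stays in ${\cal J}_k^{el}$, and analogously for the other two families. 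This is the invariance that makes the decomposition reduce the operator.

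It then remains to identify each restriction. Here I would use the isometry of $I_{el}$, $I_m$ to equip the coefficient spaces with the weighted norms \eqref{319}, \eqref{336}, after which the coefficient form of \eqref{31} is literally the matrix action \eqref{32}, \eqref{34}, \eqref{36}. For the domains I would invoke Corollary \ref{c26}: restricted to a single index $k$ with all other coefficients zero, the global convergence conditions collapse, leaving exactly $a,\er c,f\in W_2^1(\R)$ together with $(\er c)'=\la_k\er a$, which is \eqref{33}; the magnetic and harmonic families yield \eqref{35} and \eqref{37} in the same way.

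The main obstacle is the domain bookkeeping and the precise sense of \emph{reduction}. One must check that, through $I_{el}\oplus I_m$, the full domain $\Phi(\tau,\er)\oplus\Phi(\nu,\mu)$ decomposes as the orthogonal sum of the per-mode domains, equivalently that the projections onto ${\cal J}_k^{el}$, ${\cal J}_l^m$, ${\cal J}_j^0$ preserve $\dom{\cal M}$ and commute with ${\cal M}$ there. The substantive point is that the coupled conditions in Corollary \ref{c26}, in particular the constraint $(\er c_m)'=\la_m\er a_m$ and the summability of $\sum_k\la_k|c_k-a_k'|^2$ and of $\sum_l(\ka_l|b_l'|^2+\ka_l^2|b_l|^2)$, factor over the index, so that no cross-mode coupling survives and each single-mode domain is exactly the one written in \eqref{33}, \eqref{35}, \eqref{37}. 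Once this is confirmed, the unitary equivalence asserted by the theorem follows at once.
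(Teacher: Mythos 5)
Your proposal is correct and follows essentially the same route as the paper: the paper likewise passes to the coefficient picture via $I_{el}$, $I_m$, derives the blockwise action \eqref{31} from Theorem \ref{t25} and Lemma \ref{l246}, obtains the orthogonal splitting of ${\cal J}$ from Lemma \ref{l21}, Corollary \ref{c22} and Lemma \ref{l23}, and then reads off the restrictions \eqref{32}--\eqref{37}. The only difference is one of emphasis: you spell out the domain bookkeeping (that the constraints in Corollary \ref{c26} factor over the index, so the mode projections preserve $\dom{\cal M}$), a point the paper leaves implicit in the sentence ``the formula \eqref{31} implies that this decomposition reduces the Maxwell operator.''
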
 

\subsection{Proof of Theorem \ref{t13}}

\begin{lemma}
\label{l32}
Let ${\cal H}$ and ${\cal N}$ be the Hilbert spaces,
$R : {\cal H} \to {\cal N}$ be a closed operator, $\dom R$ be dense in ${\cal H}$,
$R^*: {\cal N} \to {\cal H}$ be its adjoint.
Assume that the kernels of these operators are trivial,
$\ker R = \{0\}$, $\ker R^* = \{0\}$.
Define the operator
$$
{\cal R} = \left( \begin{array}{cc}
0 & R^* \\
R & 0 \end{array} \right)
$$
in the Hilbert space ${\cal H} \oplus {\cal N}$ on the domain
$\dom {\cal R} = \dom R \oplus \dom R^*$.
Then ${\cal R}$ is unitarily equivalent to $-{\cal R}$, 
and its spectrum is symmetric with respect to zero.
Moreover, ${\cal R}$
is unitarily equivalent to the orthogonal sum of the operator 
$\sqrt{R^* R}$ in ${\cal H}$ and the operator $-\sqrt{R R^*}$ in ${\cal N}$.
\end{lemma}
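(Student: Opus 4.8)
The plan is to establish the two assertions in turn, using elementary block-matrix manipulations for the first and the polar decomposition of $R$ for the second. I would begin by recording that $\mathcal R$ is self-adjoint: since $R$ is closed and densely defined, $R^{**}=R$, and a direct computation of the adjoint of a block operator with zero diagonal shows that $\mathcal R^*$ acts as $\mathcal R$ on the same domain $\dom R \oplus \dom R^*$ (here one uses $R^{**}=R$), so $\mathcal R=\mathcal R^*$. In particular $\spec(\mathcal R)\subset\R$, and speaking of its symmetry is meaningful.

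For the symmetry I would introduce the unitary involution $U=\begin{pmatrix} I & 0 \\ 0 & -I\end{pmatrix}$ on $\mathcal H\oplus\mathcal N$. It preserves $\dom\mathcal R$, and conjugation reverses the sign of both off-diagonal blocks, so $U\mathcal R U=-\mathcal R$. Hence $\mathcal R$ and $-\mathcal R$ are unitarily equivalent and $\spec(\mathcal R)=\spec(-\mathcal R)=-\spec(\mathcal R)$.

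For the second assertion I would invoke the polar decomposition $R=V|R|$, where $|R|=\sqrt{R^*R}$ is self-adjoint with $\dom|R|=\dom R$ and $V$ is the partial isometry with initial space $\overline{\ran|R|}=(\ker R)^\perp$ and final space $\overline{\ran R}=(\ker R^*)^\perp$. The two kernel hypotheses are exactly what upgrades $V$ to a unitary operator $\mathcal H\to\mathcal N$: $\ker R=\{0\}$ gives $V^*V=I_{\mathcal H}$, while $\ker R^*=\{0\}$ gives $\overline{\ran R}=\mathcal N$, hence $VV^*=I_{\mathcal N}$. I would then record $R^*=|R|V^*$ and $\sqrt{RR^*}=V|R|V^*=:|R^*|$, the latter because $(V|R|V^*)^2=V|R|^2V^*=RR^*$ and $V|R|V^*\ge 0$. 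With $V$ unitary, the operator $W=\tfrac{1}{\sqrt2}\begin{pmatrix} I & V^* \\ V & -I\end{pmatrix}$ is a self-adjoint involution, hence unitary, and a direct multiplication using $V^*R=|R|$ and $VR^*=|R^*|$ yields $W\mathcal R W=\sqrt{R^*R}\oplus(-\sqrt{RR^*})$.

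The step demanding care is not the algebra but the unbounded-operator bookkeeping: I must check that $W$ maps $\dom\mathcal R=\dom R\oplus\dom R^*$ onto $\dom\sqrt{R^*R}\oplus\dom\sqrt{RR^*}$, and that the identities $V^*R=|R|$, $VR^*=|R^*|$ hold as operator identities on those domains rather than merely formally. This follows from $\dom|R^*|=\{k:V^*k\in\dom|R|\}$, a consequence of $V$ being bounded and unitary, which guarantees $V^*k\in\dom R$ for $k\in\dom R^*$ and $Vf\in\dom R^*$ for $f\in\dom R$. Finally, I would note that the symmetry of the spectrum also drops out of the second assertion, since $\sqrt{R^*R}$ and $\sqrt{RR^*}$ are unitarily equivalent through $V$, so the decomposition exhibits $\spec(\mathcal R)$ as $\spec(\sqrt{R^*R})\cup(-\spec(\sqrt{R^*R}))$.
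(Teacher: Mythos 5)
Your proof is correct, and for the main assertion it takes a genuinely different route from the paper. The symmetry part coincides with the paper's: both conjugate by the involution $\operatorname{diag}(I,-I)$. For the decomposition, however, the paper argues through the square: it computes ${\cal R}^2 = R^*R \oplus RR^*$, invokes the Birman--Solomyak theorem that $R^*R$ and $RR^*$ are unitarily equivalent when both kernels are trivial, and then concludes with ``this implies the claim,'' leaving implicit the passage from knowledge of ${\cal R}^2$ (together with the anticommutation symmetry) to a unitary diagonalization of ${\cal R}$ itself --- a step that does require an argument, since a self-adjoint operator is not determined up to unitary equivalence by its square alone. You instead make that step explicit and constructive: from the polar decomposition $R=V|R|$, with $V$ upgraded to a unitary ${\cal H}\to{\cal N}$ exactly by the two kernel hypotheses, you build the self-adjoint unitary involution
$$
W=\frac{1}{\sqrt 2}\left( \begin{array}{cc} I & V^* \\ V & -I \end{array} \right)
$$
and verify directly that $W{\cal R}W=\sqrt{R^*R}\oplus\left(-\sqrt{RR^*}\right)$, including the domain bookkeeping ($W$ maps $\dom R\oplus\dom R^*$ onto $\dom|R|\oplus\dom|R^*|$, which is the same set). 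The block computation checks out: the off-diagonal entries vanish by $V^*R=|R|$, $R^*V=|R|$, and the diagonal entries are $|R|$ and $-V|R|V^*=-\sqrt{RR^*}$. What your approach buys is a self-contained and fully rigorous proof of the final assertion, with the spectral symmetry dropping out as a corollary of the decomposition; what the paper's approach buys is brevity, outsourcing the essential content to a single cited result. Both arguments ultimately rest on the polar decomposition --- the theorem the paper cites is precisely the statement that the phase $V$ implements the equivalence of $R^*R$ and $RR^*$.
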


\begin{proof}
Clearly, the operator ${\cal R}$ is self-adjoint.
The operators ${\cal R}$ and $-{\cal R}$ are unitarily equivalent due to the identity
$$
\left( \begin{array}{cc} 0 & R^* \\
R & 0 \end{array} \right)
\left( \begin{array}{cc}
1 & 0 \\
0 & -1 \end{array} \right)
= - 
\left( \begin{array}{cc}
1 & 0 \\
0 & -1 \end{array} \right)
\left( \begin{array}{cc}
0 & R^* \\
R & 0 \end{array} \right) .
$$
The square of the operator ${\cal R}$ is equal
$$
{\cal R}^2 = \left( \begin{array}{cc}
R^*R & 0\\
0 & RR^* \end{array} \right) .
$$
The operators $R^*R$ and $RR^*$ are unitarily equivalent under the assumption
$\ker R = \{0\}$, $\ker R^* = \{0\}$
(see \cite[Chapter 8, \S 1, Theorem 4]{BS}).
This implies the claim.
\end{proof}

\begin{rem}
In the same way, the  operator ${\cal R}$ is unitarily equivalent to the operator
$\sqrt{R^* R}\oplus - \sqrt{R^* R}$ in the space ${\cal H}$,
and to the operator $\sqrt{RR^*}\oplus - \sqrt{RR^*}$ in the space ${\cal N}$.
\end{rem}

\begin{rem}
Note also that Lemma \ref{l32} is valid without assumption of the triviality of the kernels of $R$ and $R^*$.
Indeed, in such situation one has to add to the operator ${\cal R}$ the zero operator $0$
in the subspace of dimension $(\dim\ker R + \dim \ker R^*)$,
to the operator $\sqrt{R^* R}$ the zero operator of dimension $\dim \ker R$,
and to the operator $-\sqrt{R R^*}$ the zero operator of dimension $\dim \ker R^*$.
\end{rem}

\begin{lemma}
\label{l33}
The operator ${\cal M}_k^{el}$ is unitarily equivalent to the orthogonal sum
$\sqrt{A_k^{el}} \oplus - \sqrt{A_k^{el}}$, where
$$
A_k^{el} = - \frac1{\mu(z)} \frac{d}{dz} \left(\frac1{\er(z)} \frac{d}{dz}\right) + \frac{\la_k}{\er(z)\mu(z)}
$$
is a self-adjoint operator in $L_2(\R,\mu dx)$,
$$
\dom A_k^{el} = \left\{ p \in W_2^1(\R) : (\er^{-1} p')' \in L_2(\R) \right\} .
$$
The operator $\left({\cal M}_k^{el}\right)^2$ is unitarily equivalent to the orthogonal sum of two operators $A_k^{el}$.
\end{lemma}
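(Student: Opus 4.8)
The plan is to put ${\cal M}_k^{el}$ into the off-diagonal form covered by Lemma \ref{l32} and then read off the conclusion. I split the space \eqref{318} as ${\cal H}\oplus{\cal N}$, where ${\cal H}$ is the space of pairs $\left(\begin{array}{c}a\\ c\end{array}\right)$ subject to the constraint $(\er c)'=\la_k\er a$ with norm $\int_\R(\la_k\er|a|^2+\er|c|^2)\,dz$, and ${\cal N}=L_2(\R,\la_k\mu\,dz)$ carries the component $f$. Reading off \eqref{32}, in this splitting the operator takes the shape $\left(\begin{array}{cc}0 & R^*\\ R & 0\end{array}\right)$, where I define
$$
R:{\cal H}\to{\cal N},\qquad R\left(\begin{array}{c}a\\ c\end{array}\right)=\frac{i}{\mu}(a'-c),
$$
with domain the $(a,c)$-part of \eqref{33}, and where the upper-right block is the operator $f\mapsto\left(\frac{i}{\er}f',\,\frac{i\la_k}{\er}f\right)$ on $W_2^1(\R)$.

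First I would confirm that these two blocks are genuinely adjoint. A single integration by parts gives, for $(a,c)\in\dom R$ and $f\in W_2^1(\R)$,
$$
\langle R(a,c),f\rangle_{\cal N}=i\la_k\int_\R(a'-c)\overline{f}\,dz=-i\la_k\int_\R\big(a\,\overline{f'}+c\,\overline{f}\big)\,dz=\langle (a,c),R^*f\rangle_{\cal H},
$$
the boundary term vanishing because $a$ and $f$ lie in $W_2^1(\R)$ and hence tend to $0$ at $\pm\infty$. Since ${\cal M}_k^{el}$ is self-adjoint --- being the part of the self-adjoint Maxwell operator ${\cal M}$ in the reducing subspace ${\cal J}_k^{el}$, cf.\ Theorem \ref{t31} --- the off-diagonal blocks must be mutually adjoint and $R$ must be closed; this fixes $R^*$ to be exactly the stated operator on $W_2^1(\R)$.

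Next I would check the two kernels are trivial, as Lemma \ref{l32} requires. Clearly $\ker R^*=\{0\}$, since $R^*f=0$ forces $\frac{i\la_k}{\er}f=0$ and $\la_k>0$. For $\ker R$, the relation $R(a,c)=0$ means $c=a'$, which together with the constraint $(\er c)'=\la_k\er a$ gives $(\er a')'=\la_k\er a$; multiplying by $\overline{a}$, integrating, and integrating by parts yields $\int_\R\er|a'|^2\,dz+\la_k\int_\R\er|a|^2\,dz=0$, whence $a\equiv0$ and $c\equiv0$. I then compute the composition on ${\cal N}$: for $f$ with $R^*f\in\dom R$,
$$
RR^*f=\frac{i}{\mu}\Big(\big(\tfrac{i}{\er}f'\big)'-\tfrac{i\la_k}{\er}f\Big)=-\frac{1}{\mu}\Big(\frac{1}{\er}f'\Big)'+\frac{\la_k}{\er\mu}f,
$$
and the requirement $\frac{1}{\er}f'\in W_2^1(\R)$ identifies $\dom RR^*$ with $\dom A_k^{el}$. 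As the metric of ${\cal N}=L_2(\R,\la_k\mu\,dz)$ differs from that of $L_2(\R,\mu\,dz)$ only by the positive constant $\la_k$, this shows $RR^*=A_k^{el}$, self-adjoint and nonnegative on the stated domain. Lemma \ref{l32} now yields ${\cal M}_k^{el}\cong\sqrt{R^*R}\oplus(-\sqrt{RR^*})$ together with $R^*R\cong RR^*=A_k^{el}$; hence both square roots coincide with $\sqrt{A_k^{el}}$ up to unitary equivalence, giving ${\cal M}_k^{el}\cong\sqrt{A_k^{el}}\oplus(-\sqrt{A_k^{el}})$, and likewise $({\cal M}_k^{el})^2\cong R^*R\oplus RR^*\cong A_k^{el}\oplus A_k^{el}$.

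The step I expect to be the main obstacle is the precise identification of the adjoint: showing that $R^*$ (and correspondingly the operator $A_k^{el}=RR^*$) has exactly the claimed domain, rather than merely acting as the stated differential expression on some core. I would resolve this by invoking the already-established self-adjointness of ${\cal M}_k^{el}$, which forces $R$ to be closed with $R^*$ its genuine Hilbert-space adjoint, so that the abstract machinery of Lemma \ref{l32} applies verbatim; the integration by parts, the kernel estimate, and the computation of $RR^*$ are then routine.
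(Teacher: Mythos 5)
Your proof is correct, and it rests on the same two pillars as the paper's proof: the splitting of the space \eqref{318} into the $(a,c)$-block and the $f$-block, and the application of Lemma \ref{l32} to the resulting off-diagonal operator, with the factorization $RR^*=A_k^{el}$. The difference is in execution. The paper first flattens the constrained $(a,c)$-space by the explicit isometry $(p,f)\mapsto\left(\la_k^{-1}\er^{-1}p',\,\er^{-1}p,\,f\right)$, identifying it with $W_2^1(\R)$ equipped with a weighted norm; in those coordinates $R^*$ becomes multiplication by $i\la_k$, whose range is all of ${\cal H}$, so the density of $\ran R^*$ (hence $\ker R=\{0\}$) and the identification of the domains are immediate by inspection. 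You instead work intrinsically in the constrained space: you obtain closedness of $R$ and the exact identification of $R^*$ by invoking the self-adjointness of ${\cal M}_k^{el}$, inherited from ${\cal M}$ through the reducing decomposition of Theorem \ref{t31} (a legitimate appeal, since the domain \eqref{33} splits as a direct sum of the two blocks, so self-adjointness of the off-diagonal matrix forces the blocks to be mutually adjoint), and you prove $\ker R=\{0\}$ by a direct integration-by-parts argument on the equation $(\er a')'=\la_k\er a$. Both justifications are sound; in fact, under the paper's isometry your pair $(R,R^*)$ is carried exactly onto the paper's pair, so the two factorizations coincide up to unitary equivalence. What the paper's parametrization buys is that every hypothesis of Lemma \ref{l32} is verified by a one-line computation, with no appeal to the self-adjointness of the part and no separate kernel argument; what your version buys is that no change of variables is needed and the argument stays in the coordinates in which ${\cal M}_k^{el}$ was originally defined.
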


\begin{proof}
Let us consider the map
$$
\left( \begin{array}{cc} p \\ f \end{array} \right) \mapsto 
\left( \begin{array}{cc} a\\ c\\ f \end{array} \right) 
= \left( \begin{array}{cc} \la_k^{-1} \er^{-1} p' \\ \er^{-1} p\\ f \end{array} \right) .
$$
It is an isometric isomorphism between the space
$W_2^1(\R) \oplus L_2(\R)$
with the norm
$$
\int_\R \left(\er^{-1} \la_k^{-1} |p'|^2 + \er^{-1} |p|^2 + \mu \la_k |f|^2\right) dz
$$
and the space \eqref{318}, \eqref{319}.
This map transforms the operator ${\cal M}_k^{el}$ into the operator
$$
\tilde {\cal M}_k^{el}
= \left( \begin{array}{ccc}
0 & i\la_k \\
\frac{i}{\la_k\mu}\frac{d}{dz}\left(\frac1\er \frac{d}{dz}\right) - \frac{i}{\er\mu} & 0  \end{array} \right)
$$
on the domain
$$
\dom \tilde {\cal M}_k^{el}
= \left\{ p \in W_2^1(\R) : (\er^{-1} p')' \in L_2(\R) \right\} \oplus W_2^1(\R) .
$$
Now we apply Lemma \ref{l32} with 
$$
{\cal H} = W_2^1(\R)\qquad \text{with the norm} \quad \int_\R \er^{-1} \left(\la_k^{-1} |p'|^2 + |p|^2\right) dz;
$$
${\cal N} = L_2(\R, \la_k \mu dz)$;
$$
R = \frac{i}{\la_k\mu}\frac{d}{dz}\left(\frac1\er \frac{d}{dz}\right) - \frac{i}{\er\mu};
$$
$R^*$ is the operator of multiplication by $i\la_k$ acting from $L_2(\R)$ to $W_2^1(\R)$.
Clearly, $\ker R^* = \{0\}$.
It is also clear that the image $\ran R^*$ is dense in ${\cal H}$, and so $\ker R = \{0\}$.
Finally, $RR^* = A_k^{el}$.
Therefore, $\tilde {\cal M}_k^{el}$ is unitarily equivalent to $\sqrt{A_k^{el}} \oplus - \sqrt{A_k^{el}}$.
\end{proof}

\begin{lemma}
\label{l34}
The operator ${\cal M}_l^m$ is unitarily equivalent to the orthogonal sum
$\sqrt{A_l^m} \oplus - \sqrt{A_l^m}$, where
$$
A_l^m = - \frac1{\er(z)} \frac{d}{dz} \left(\frac1{\mu(z)} \frac{d}{dz}\right) + \frac{\ka_l}{\er(z)\mu(z)}
$$
is a self-adjoint operator in $L_2(\R,\er dx)$,
$$
\dom A_l^m = \left\{ q \in W_2^1(\R) : (\mu^{-1} q')' \in L_2(\R) \right\} .
$$
The operator $\left({\cal M}_l^m\right)^2$ is unitarily equivalent to the orthogonal sum of two operators $A_l^m$.
\end{lemma}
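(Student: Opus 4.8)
The proof runs in complete parallel with that of Lemma \ref{l33}, with the roles of $\er$ and $\mu$, and of the eigenvalues $\la_k$ and $\ka_l$, interchanged; so my plan is essentially to transcribe that argument. The idea is to introduce a change of variables that trivialises the constraint $(\mu g)' = \ka_l\mu e$ built into the space \eqref{333} and the domain \eqref{35}, and then to recognise the resulting first-order $2\times2$ block operator as an instance of the abstract situation treated in Lemma \ref{l32}.

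Concretely, I would consider the map
$$
\left( \begin{array}{c} q \\ b \end{array} \right) \mapsto
\left( \begin{array}{c} b \\ e \\ g \end{array} \right)
= \left( \begin{array}{c} b \\ \ka_l^{-1}\mu^{-1} q' \\ \mu^{-1} q \end{array} \right).
$$
Putting $q=\mu g$ makes the identity $(\mu g)'=\ka_l\mu e$ automatic, so the image lies in the constrained space \eqref{333}; and substituting into \eqref{336} shows that this map is an isometric isomorphism of $W_2^1(\R)\oplus L_2(\R)$, equipped with the norm
$$
\int_\R \left( \mu(z)^{-1}\left(\ka_l^{-1}|q'(z)|^2 + |q(z)|^2\right) + \ka_l\,\er(z)\,|b(z)|^2\right) dz ,
$$
onto the space \eqref{333}, \eqref{336}. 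Under this identification the domain \eqref{35} corresponds to $\{q\in W_2^1(\R):(\mu^{-1}q')'\in L_2(\R)\}\oplus W_2^1(\R)$.

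The only genuine computation is to verify that the change of variables transforms the operator \eqref{34} into
$$
\tilde{\cal M}_l^m = \left( \begin{array}{cc} 0 & -i\ka_l \\ \dfrac{i}{\ka_l}\,A_l^m & 0 \end{array} \right) .
$$
The crucial point is that the top component $\tfrac{i}{\er}(g-e')$ of ${\cal M}_l^m$ collapses, after inserting $g=\mu^{-1}q$ and $e'=\ka_l^{-1}(\mu^{-1}q')'$, into $\tfrac{i}{\ka_l}A_l^m q$, while the lower two components combine into $\mu g=-i\ka_l b$. This is the whole obstacle, and it amounts only to careful bookkeeping of the weights and of the constant $\ka_l$, which is nonzero precisely because $l\ge2$.

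Finally I would apply Lemma \ref{l32} with ${\cal H}=W_2^1(\R)$ carrying the weight displayed above, ${\cal N}=L_2(\R,\ka_l\er\,dz)$, $R=\tfrac{i}{\ka_l}A_l^m:{\cal H}\to{\cal N}$, and $R^*$ the operator of multiplication by $-i\ka_l$ from ${\cal N}$ to ${\cal H}$. As in Lemma \ref{l33}, $\ker R^*=\{0\}$ is clear, $\ran R^*$ is dense in ${\cal H}$ (whence $\ker R=\{0\}$), and a direct check gives $RR^*=A_l^m$. Lemma \ref{l32} then shows that $\tilde{\cal M}_l^m$, and hence ${\cal M}_l^m$, is unitarily equivalent to $\sqrt{A_l^m}\oplus-\sqrt{A_l^m}$; the statement about $\left({\cal M}_l^m\right)^2$ follows from the identity ${\cal R}^2=R^*R\oplus RR^*$ together with the unitary equivalence of $R^*R$ and $RR^*$ recorded in that lemma.
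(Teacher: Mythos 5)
Your proposal is correct and follows essentially the same route as the paper: the same change of variables $e=\ka_l^{-1}\mu^{-1}q'$, $g=\mu^{-1}q$ trivialising the constraint, the same weighted norm identification, and the same application of Lemma \ref{l32}. The only (immaterial) differences are the ordering of the components $(q,b)$ versus the paper's $(b,q)$, and consequently that you realise $A_l^m$ as $RR^*$ with $R$ the differential operator, whereas the paper takes $R$ to be multiplication by $-i\ka_l$ and obtains $A_l^m=R^*R$.
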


\begin{proof}
The map
$$
\left( \begin{array}{cc} b \\ q \end{array} \right) \mapsto 
\left( \begin{array}{cc} b\\ e\\ g \end{array} \right) 
= \left( \begin{array}{cc} b\\ \ka_l^{-1} \mu^{-1} q' \\ \mu^{-1} q \end{array} \right) .
$$
is an isometric isomorphism between the space
$L_2(\R) \oplus W_2^1(\R)$
with the norm
$$
\int_\R \left(\er \ka_l |b|^2 + \mu^{-1} \ka_l^{-1} |q'|^2 + \mu^{-1} |q|^2\right) dz
$$
and the space \eqref{333}, \eqref{336}.
This map transforms the operator ${\cal M}_l^m$ into the operator
$$
\tilde {\cal M}_l^m
= \left( \begin{array}{ccc}
0 & -\frac{i}{\ka_l\er}\frac{d}{dz}\left(\frac1\mu \frac{d}{dz}\right) + \frac{i}{\er\mu} \\
- i\ka_l  & 0  \end{array} \right)
$$
on the domain
$$
\dom \tilde {\cal M}_l^m
= W_2^1(\R) \oplus \left\{ q \in W_2^1(\R) : (\mu^{-1} q')' \in L_2(\R) \right\}  .
$$
Now we apply Lemma \ref{l32} with ${\cal H} = L_2(\R, \ka_l \er dz)$;
$$
{\cal N} = W_2^1(\R)\qquad \text{with the norm} \quad \int_\R \mu^{-1} \left(\ka_l^{-1} |q'|^2 + |q|^2\right) dz;
$$
$R$ is the operator of multiplication by $-i\ka_l$ acting from $L_2(\R)$ to $W_2^1(\R)$.
Clearly, $\ker R = \{0\}$, the image $\ran R$ is dense in ${\cal N}$, and $\ker R^* = \{0\}$.
Finally, $R^*R = A_l^m$.
Therefore, $\tilde {\cal M}_l^m$ is unitarily equivalent to $\sqrt{A_l^m} \oplus - \sqrt{A_l^m}$.
\end{proof}

\begin{lemma}
\label{l35}
The operator ${\cal M}_j^0$ is unitarily equivalent to the orthogonal sum
$\sqrt{A^0} \oplus - \sqrt{A^0}$, where
$$
A^0 = - \frac1{\mu(z)} \frac{d}{dz} \left(\frac1{\er(z)} \frac{d}{dz}\right) 
$$
is a self-adjoint operator in $L_2(\R,\mu dx)$,
$$
\dom A^0 = \left\{ p \in W_2^1(\R) : (\er^{-1} p')' \in L_2(\R) \right\} .
$$
The operator $\left({\cal M}_j^0\right)^2$ is unitarily equivalent to the orthogonal sum of two operators $A^0$.
\end{lemma}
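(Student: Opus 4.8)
The plan is to recognize ${\cal M}_j^0$ as an off-diagonal (Dirac-type) operator of the form treated in Lemma \ref{l32} and to apply that lemma directly, exactly as in the proofs of Lemmas \ref{l33} and \ref{l34}. The essential simplification here is that ${\cal J}_j^0$ is identified with the whole of $L_2(\R,\C^2)$ carrying no differential side-condition, so (unlike the previous two lemmas, where a change of variables served to resolve the constraints $(\er c)'=\la_k\er a$ and $(\mu g)'=\ka_l\mu e$) no preliminary change of variables is needed. Concretely, I would put ${\cal H}=L_2(\R,\er\,dz)$ for the component $d$ and ${\cal N}=L_2(\R,\mu\,dz)$ for the component $h$, take $R$ to be the lower-left entry $-\frac i\mu\frac{d}{dz}\colon{\cal H}\to{\cal N}$ of \eqref{36} on the domain $W_2^1(\R)$, and let $R^*\colon{\cal N}\to{\cal H}$ denote its Hilbert-space adjoint, so that ${\cal M}_j^0$ acquires the block form of Lemma \ref{l32}.

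Then I would check the hypotheses of Lemma \ref{l32}. Because $\er,\mu$ are bounded above and below by \eqref{01}, the weighted norms of ${\cal H}$ and ${\cal N}$ are equivalent to the unweighted norm of $L_2(\R)$, so the dense definiteness and closedness of $R$ reduce to the standard facts that $W_2^1(\R)$ is dense in $L_2(\R)$ and that $\frac{d}{dz}$ on $W_2^1(\R)$ is closed. An integration by parts identifies $R^*=-\frac i\er\frac{d}{dz}$, and the bound $\er\ge\er_0>0$ gives $h'/\er\in L_2(\R,\er\,dz)\iff h'\in L_2(\R)$, whence $\dom R^*=W_2^1(\R)$. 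Finally $\ker R=\ker R^*=\{0\}$, since a function on $\R$ with vanishing derivative is constant and no nonzero constant belongs to $L_2(\R)$.

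It then remains to compute the two products. A direct composition gives $RR^*=-\frac1\mu\frac{d}{dz}\bigl(\frac1\er\frac{d}{dz}\bigr)=A^0$, self-adjoint in ${\cal N}=L_2(\R,\mu\,dz)$ on the domain $\{p\in W_2^1(\R):(\er^{-1}p')'\in L_2(\R)\}$, while $R^*R$ is the same expression with $\er$ and $\mu$ interchanged, acting in ${\cal H}=L_2(\R,\er\,dz)$. By Lemma \ref{l32} the operator ${\cal M}_j^0$ is unitarily equivalent to $\sqrt{R^*R}\oplus-\sqrt{RR^*}$; since the triviality of both kernels makes $R^*R$ and $RR^*$ unitarily equivalent, the two square roots coincide with $\sqrt{A^0}$, and one obtains ${\cal M}_j^0\cong\sqrt{A^0}\oplus-\sqrt{A^0}$. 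The same lemma gives $\left({\cal M}_j^0\right)^2\cong R^*R\oplus RR^*\cong A^0\oplus A^0$.

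The one point that genuinely needs care — and the feature that distinguishes this lemma from Lemmas \ref{l33} and \ref{l34}, in each of which one of the two factors was a bounded multiplication operator — is that here both $R$ and $R^*$ are unbounded first-order differential operators. Thus I expect the main (if modest) obstacle to be the clean verification that $R$ is closed and that its adjoint is precisely $R^*$ on $W_2^1(\R)$, with the measurable weights $\er,\mu$ correctly accounted for; as noted above this is handled by passing to the equivalent unweighted norms, after which it reduces to the self-adjointness of $i\,d/dz$ on $W_2^1(\R)$.
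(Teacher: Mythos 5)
Your proposal is correct, and it is essentially the paper's own proof: the paper disposes of this lemma with the single sentence ``Follows directly from Lemma \ref{l32}'', and your text supplies exactly the verifications (closedness and dense definiteness of $R$, identification of $R^*$, triviality of both kernels, the computations $RR^*=A^0$ and $R^*R=\tilde A^0$, and the passage through Lemma \ref{l32} together with $R^*R\cong RR^*$) that this one-liner leaves implicit.

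One sign subtlety deserves mention, though it reflects a slip in the paper rather than in your argument. Your integration by parts is right: the adjoint of $R=-\frac{i}{\mu}\frac{d}{dz}\colon L_2(\R,\er\,dz)\to L_2(\R,\mu\,dz)$ on $W_2^1(\R)$ is $R^*=-\frac{i}{\er}\frac{d}{dz}$, carrying the \emph{same} sign of $i$. Consequently the matrix \eqref{36} as printed, whose off-diagonal entries carry opposite signs of $i$, is not of the block form of Lemma \ref{l32} at all --- as printed it is skew-adjoint rather than self-adjoint. The typo is in \eqref{36} itself: reading off the $\oo_j$-components of \eqref{31} (equivalently, of \eqref{A6} and \eqref{A8}) gives $+\frac{i}{\mu}\frac{d}{dz}$ in the lower-left corner, in line with \eqref{32} and \eqref{34}, where the two off-diagonal entries always have matching signs. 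None of this affects your proof: your block operator differs from the correct ${\cal M}_j^0$ by the sign of both off-diagonal entries, i.e.\ by conjugation with the unitary $(d,h)\mapsto(d,-h)$, so it is unitarily equivalent to ${\cal M}_j^0$ and has the same products $RR^*=A^0$ and $R^*R=\tilde A^0$; the conclusion then follows exactly as you wrote it.
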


\begin{proof}
Follows directly from Lemma \ref{l32}.
\end{proof}

\begin{rem}
\label{r48}
One could introduce the operator
$$
\tilde A^0 = - \frac1{\er(z)} \frac{d}{dz} \left(\frac1{\mu(z)} \frac{d}{dz}\right) 
$$
in the space $L_2(\R,\er dx)$,
$$
\dom \tilde A^0 = \left\{ q \in W_2^1(\R) : (\mu^{-1} q')' \in L_2(\R) \right\} 
$$
instead of the operator $A^0$.
These operators are unitarily equivalent.
\end{rem}

Now, Theorem \ref{t13} follows from Theorem \ref{t31} and
Lemmas \ref{l33}, \ref{l34} and \ref{l35}.

\section{Proof of Corollary \ref{c14}}
The operator $A_k^{el}$ corresponds to the quadratic form
$$
a_k^{el} [p] = \int_\R \er(z)^{-1} \left(|p'(z)|^2 + \la_k |p(z)|^2\right) dz,
\qquad \dom a_k^{el} = W_2^1(\R)
$$
in the space $L_2(\R,\mu dx)$.
Note that
$$
a_k^{el} [p] \ge \frac{\la_k}{\|\er\mu\|_{L_\infty(\R)}} \int_\R |p(z)|^2 \mu(z) \, dz ,
$$
so
\begin{equation}
\label{468}
A_k^{el} \ge \frac{\la_k}{\|\er\mu\|_{L_\infty(\R)}}\, I
\qquad \text{and} \qquad
\si (A_k^{el}) \subset  \left[\frac{\la_k}{\|\er\mu\|_{L_\infty}}, \infty\right).
\end{equation}
In the same way
\begin{equation}
\label{4681}
A_l^m \ge \frac{\ka_l}{\|\er\mu\|_{L_\infty(\R)}}\, I
\qquad \text{and} \qquad
\si (A_l^m) \subset  \left[\frac{\ka_l}{\|\er\mu\|_{L_\infty}}, \infty\right).
\end{equation}
Note also that the number $\frac{\la_k}{\|\er\mu\|_{L_\infty}}$ can not be an eigenvalue 
of the operator $A_k^{el}$.
Indeed, if
$$
A_k^{el} p = \frac{\la_k}{\|\er\mu\|_{L_\infty}} \, p,
$$
then
$$
\int_\R \er(z)^{-1} \left(|p'(z)|^2 + \la_k |p(z)|^2\right) dz
= \frac{\la_k}{\|\er\mu\|_{L_\infty}} \int_\R |p(z)|^2 \mu(z) \, dz 
$$
and therefore,
$$
\int_\R \er(z)^{-1} |p'(z)|^2 = 0
\qquad \Rightarrow \qquad  p \equiv \const 
\qquad \Rightarrow \qquad  p \equiv 0 .
$$
Thus,
\begin{equation}
\label{4685}
\si_p (A_k^{el}) \subset \left(\frac{\la_k}{\|\er\mu\|_{L_\infty}}, \infty\right).
\end{equation}
In the same way
\begin{equation}
\label{4686}
\si_p (A_l^m) \subset \left(\frac{\ka_l}{\|\er\mu\|_{L_\infty}}, \infty\right),
\end{equation}
\begin{equation}
\label{4687}
\si_p(A^0) \subset \left(0, \infty\right).
\end{equation}

Next, if the cross-section $U$ is simply connected then the operators $A^0$
in Theorem \ref{t13} are absent.
In such situation we need the inequality
\begin{equation}
\label{Fr}
\ka_2 < \la_1
\end{equation}
which holds true for arbitrary domains (see \cite{Fr} or \cite{F04}).
Now, \eqref{sp}, \eqref{468} and \eqref{4681} 
 imply
$$
\si\left({\cal M}^2\right) 
\subset \left[\frac{\ka_2}{\|\er\mu\|_{L_\infty}}, \infty\right).
$$
Therefore,
$$
\si\left({\cal M}\right) 
\subset \left(-\infty, - \sqrt{\frac{\ka_2}{\|\er\mu\|_{L_\infty}}}\right] \bigcup
\left[\sqrt{\frac{\ka_2}{\|\er\mu\|_{L_\infty}}}, \infty\right). \qquad \qed
$$

\section{Reduction to one-dimensional Schr\" odinger operator}
Now, it is natural to transform the operators $A_k^{el}$, $A_l^m$, $A^0$
into Schr\" odinger operators.
For this purpose we will assume that the coefficients are twice differentiable.
Such transformation is well known, we give the details for the reader convenience.

\subsection{Change of variables}

\begin{lemma}
\label{l41}
Let 
$$
\er \in W_{1,loc}^2 (\R) \cap L_\infty(\R)
\qquad \text{and} \qquad 
\sup_{s\in\R} \int_s^{s+1} |\er''(t)| \, dt =: S < \infty .
$$
Then $\er \in W_\infty^1 (\R)$.
\end{lemma}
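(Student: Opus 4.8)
The goal is to show $\er \in W_\infty^1(\R)$, which means I must produce a uniform pointwise bound on $\er'$ in terms of the hypotheses: the $L_\infty$ bound on $\er$ and the uniform local integral bound $S$ on $\er''$. The plan is to fix an arbitrary point $s \in \R$ and estimate $|\er'(s)|$ using the behaviour of $\er$ and $\er''$ on the unit interval containing $s$. Since $\er \in W^2_{1,loc}$, the derivative $\er'$ is (after modification on a null set) absolutely continuous, so the fundamental theorem of calculus applies to $\er'$ on any bounded interval.

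\medskip\noindent\textbf{The interpolation step.} The key tool is a one-dimensional interpolation inequality controlling the first derivative by the function and its second derivative. Concretely, for a point $t$ in an interval $[s, s+1]$ I would write, for any $y \in [s,s+1]$,
$$
\er'(t) = \er'(y) + \int_y^t \er''(\tau)\, d\tau .
$$
Integrating this identity in $y$ over $[s,s+1]$ and using $\int_s^{s+1}\er'(y)\,dy = \er(s+1)-\er(s)$ gives
$$
\er'(t) = \er(s+1) - \er(s) + \int_s^{s+1}\l(\int_y^t \er''(\tau)\,d\tau\r) dy .
$$
Taking absolute values and using $|\er(s+1)-\er(s)| \le 2\|\er\|_{L_\infty}$ together with the bound on the iterated integral by $\int_s^{s+1}|\er''|\,d\tau \le S$ yields
$$
|\er'(t)| \le 2\|\er\|_{L_\infty(\R)} + S \qquad \forall\, t \in [s,s+1].
$$

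\medskip\noindent\textbf{Conclusion.} Since $s$ was arbitrary and every point of $\R$ lies in some interval $[s,s+1]$, the bound $|\er'(t)| \le 2\|\er\|_{L_\infty(\R)} + S$ holds for almost every $t \in \R$. Combined with $\er \in L_\infty(\R)$, this shows $\er \in W_\infty^1(\R)$, as claimed. The main point requiring care is the passage from the local $W^2_1$ regularity to the absolute continuity of $\er'$ justifying the fundamental theorem of calculus, and the bookkeeping that makes the constant genuinely uniform in $s$ — the hypothesis $\sup_s \int_s^{s+1}|\er''| < \infty$ is exactly what delivers this uniformity, so no single interval can be exceptional. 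I do not expect any serious obstacle; the only subtlety is choosing the averaging over $y$ (rather than fixing one endpoint) so that $\er'$ is controlled by $\er$ itself rather than by an a priori bound on $\er'$ at a point.
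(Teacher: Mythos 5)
Your proof is correct and takes essentially the same route as the paper: both express $\er'$ at a point via the fundamental theorem of calculus on a unit interval while eliminating the unknown boundary value of $\er'$ --- you by averaging the base point $y$ over the interval, the paper by inserting the weight $(t-s+1)$ vanishing at one endpoint (the two identities coincide after an application of Fubini) --- and both arrive at the identical bound $\|\er'\|_{L_\infty} \le S + 2\|\er\|_{L_\infty}$. There are no gaps.
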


\begin{proof}
Let $s\in\R$.
Then 
$$ \er'(s) = \int_{s-1}^s \left((t-s+1)\er'(t)\right)' dt 
= \int_{s-1}^s \left((t-s+1)\er''(t) + \er'(t)\right) dt,
$$
so
$$
|\er'(s)| \le \int_{s-1}^s \left|\er''(t)\right| dt + \left|\er(s) - \er(s-1)\right| \le  S + 2 \|\er\|_{L_\infty},
$$
and $\|\er'\|_{L_\infty} \le S + 2 \|\er\|_{L_\infty}$.
\end{proof}

It is convenient to change the variable by the rule
\begin{equation}
\label{41}
z \mapsto y (z) = \int_0^z \sqrt{\er(s) \mu(s)}\, ds .
\end{equation}
The function $y$ strictly increases, and it is a bijection $\R \to \R$.
Introduce the functions
\begin{equation}
\label{42}
\tilde \er(y) = \er(z), \qquad \tilde \mu(y) = \mu(z) .
\end{equation}
We have
\begin{equation}
\label{d}
dy = \sqrt{\er(z)\mu(z)} \, dz, 
\qquad dz = \frac{dy}{\sqrt{\tilde\er(y)\tilde\mu(y)}} .
\end{equation}

\begin{lemma}
\label{l43}
Let the functions $\er$, $\mu$ satisfy \eqref{01} and \eqref{03},
and the functions $\tilde\er$, $\tilde\mu$ be defined by \eqref{41}, \eqref{42}.
Then
$\tilde \er, \tilde \mu \in W_{1,loc}^2(\R)$, 
\begin{equation}
\label{45}
0 < \er_0 \le \tilde \er(y) \le \er_1 , \qquad 
0 < \mu_0 \le \tilde \mu(y) \le \mu_1 ,
\end{equation} 
and
\begin{equation}
\label{46}
\sup_{t\in\R} \int_t^{t+1} \left(|\tilde\er''(y)| + |\tilde\mu''(y)|\right) dy < \infty .
\end{equation}
\end{lemma}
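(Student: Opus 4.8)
The plan is to reduce everything to the explicit chain-rule formulas for the $y$-derivatives of $\tilde\er$ and $\tilde\mu$, and then to transport the integral condition \eqref{03} back and forth between the variables $z$ and $y$.

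First, the two-sided bounds \eqref{45} are immediate: the map $z\mapsto y(z)$ in \eqref{41} is a bijection of $\R$ onto itself, and $\tilde\er$, $\tilde\mu$ take exactly the same sets of values as $\er$, $\mu$, so \eqref{45} is merely a restatement of \eqref{01}. Next I would observe that $\er$ and $\mu$ satisfy the hypotheses of Lemma \ref{l41} (they lie in $W_{1,loc}^2(\R)\cap L_\infty(\R)$ and obey the integral bound in \eqref{03}), hence $\er,\mu\in W_\infty^1(\R)$; in particular $\er'$ and $\mu'$ are bounded. Together with the lower bounds $\er\ge\er_0>0$, $\mu\ge\mu_0>0$, this is what keeps all the denominators below under control.

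Using $dz = dy/\sqrt{\tilde\er\tilde\mu}$ from \eqref{d} and the chain rule, I would compute $\tilde\er'(y)=\er'(z)/\sqrt{\er(z)\mu(z)}$ and then differentiate once more to obtain an explicit expression of the form
$$
\tilde\er''(y) = \frac{\er''(z)}{\er(z)\mu(z)} - \frac{\er'(z)\bigl(\er'(z)\mu(z)+\er(z)\mu'(z)\bigr)}{2(\er(z)\mu(z))^2},
$$
and symmetrically for $\tilde\mu''$. The crucial structural point is that $\er''$ enters only linearly, through the first term, while the remaining term is a product of first derivatives divided by a quantity bounded away from zero; by the previous step it is therefore bounded by a constant $C$ depending only on $\er_0,\mu_0,\|\er'\|_{L_\infty},\|\mu'\|_{L_\infty}$. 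Consequently $|\tilde\er''(y)|\le (\er_0\mu_0)^{-1}|\er''(z)| + C$. Since $\tilde\er$ and $\tilde\er'$ are bounded and the right-hand side is locally integrable (as $\er''\in L_{1,loc}(\R)$), this also yields $\tilde\er\in W_{1,loc}^2(\R)$, and likewise for $\tilde\mu$. Here I would note that the chain-rule manipulations are legitimate because $y(z)$ is bi-Lipschitz (its derivative $\sqrt{\er\mu}$ is bounded above and below) and $\er,\mu$ are Lipschitz, so the compositions are differentiated in the Sobolev sense.

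Finally, for the uniform bound \eqref{46} I would pass back to the variable $z$. Fixing $t\in\R$ and writing $[t,t+1]=y([z_1,z_2])$, the substitution $dy=\sqrt{\er\mu}\,dz$ together with $\sqrt{\er\mu}\le\sqrt{\er_1\mu_1}$ gives
$$
\int_t^{t+1}|\tilde\er''(y)|\,dy \le \sqrt{\er_1\mu_1}\int_{z_1}^{z_2}\Bigl(\frac{|\er''(z)|}{\er_0\mu_0}+C\Bigr)\,dz .
$$
Because $y(z_2)-y(z_1)=\int_{z_1}^{z_2}\sqrt{\er\mu}\,dz\ge\sqrt{\er_0\mu_0}\,(z_2-z_1)=1$, the $z$-interval has length $z_2-z_1\le(\er_0\mu_0)^{-1/2}$, so it is covered by a fixed finite number of unit intervals; applying \eqref{03} on each of them bounds $\int_{z_1}^{z_2}|\er''|\,dz$ by a constant independent of $t$, and the term $C(z_2-z_1)$ is likewise bounded. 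The identical estimate for $\tilde\mu''$ then yields \eqref{46}. The only genuinely delicate point is justifying the second-order chain rule under the mere $W_{1,loc}^2$ regularity, which is handled by the bi-Lipschitz change of variables; the rest is bookkeeping with the substitution and the covering argument.
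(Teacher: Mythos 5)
Your proof is correct and follows essentially the same route as the paper: the same chain-rule formulas for $\tilde\er''$, the same substitution $dy=\sqrt{\er\mu}\,dz$, the same bound $z_2-z_1\le(\er_0\mu_0)^{-1/2}$ with a covering by finitely many unit intervals, and the same appeal to Lemma \ref{l41} for the boundedness of $\er',\mu'$ (the paper merely defers that appeal to the end and keeps $|\er'|^2+|\er'\mu'|$ under the integral instead of absorbing them into a constant $C$ pointwise). The only blemish is the displayed chain ``$y(z_2)-y(z_1)=\dots\ge\sqrt{\er_0\mu_0}(z_2-z_1)=1$'', where the ``$=1$'' belongs to the left-hand side, but the conclusion you draw from it is the correct one.
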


\begin{proof}
Inequalities \eqref{45} are clear.
Let us prove \eqref{46}.
We have
\begin{equation}
\label{461}
\tilde \er'(y) = \frac{\er'(z)}{\sqrt{\er(z)\mu(z)}},
\end{equation}
\begin{equation}
\label{462}
\tilde \er''(y) = \frac{\er''(z)}{\er(z)\mu(z)}
- \frac{\er'(z)^2}{2\er(z)^2\mu(z)}
-\frac{\er'(z)\mu'(z)}{2\er(z)\mu(z)^2}.
\end{equation}
Therefore,
$$
\int_t^{t+1} |\tilde\er''(y)| \, dy 
\le C \int_{z(t)}^{z(t+1)} \left(|\er''(z)| + |\er'(z)|^2 + |\er'(z)\mu'(z)|\right) dz,
$$
where $z$ is the inverse function of the function $y$;
$$
z(t+1) - z(t) = \int_t^{t+1} z'(y)\, dy 
= \int_t^{t+1} \frac{dy}{\sqrt{\tilde\er(y)\tilde\mu(y)}} \le \frac1{\sqrt{\er_0\mu_0}}.
$$
Thus,
$$
\int_t^{t+1} |\tilde\er''(y)| \, dy \le 
C \left(\left[\frac1{\sqrt{\er_0\mu_0}}\right] + 1\right)
\left(\sup_{s\in\R} \int_s^{s+1} |\er''(z)| \, dz + \|\er'\|_{L_\infty}^2 + 
\|\er'\|_{L_\infty}\|\mu'\|_{L_\infty}\right).
$$
This together with Lemma \ref{l41} imply \eqref{46} for the function $\tilde \er$.
The argument for the function $\tilde \mu$ is just the same.
\end{proof}

\begin{lemma}
\label{l44}
Under the conditions of the precedent Lemma
introduce the functions
\begin{equation}
\label{423}
\nu (y) = \tilde \er (y)^{1/4} \tilde \mu(y)^{-1/4},
\end{equation}
\begin{equation}
\label{426}
\eta(y) = \frac{\nu'(y)}{\nu(y)}
= \frac14 \left(\frac{\tilde \er'(y)}{\tilde \er(y)} - \frac{\tilde \mu'(y)}{\tilde \mu(y)}\right),
\end{equation}
and
\begin{equation}
\label{43}
V(y) = \eta(y)^2 - \eta'(y) + \la \tilde\er(y)^{-1} \tilde\mu(y)^{-1}
\end{equation}
with some $\la \ge 0$.
Then
\begin{equation}
\label{47}
\nu \in W_{1,loc}^2 (\R) \cap L_\infty(\R) ,
\qquad \sup_{t\in\R} \int_t^{t+1} |\nu''(y)| \, dy < \infty ,
\end{equation}
\begin{equation}
\label{48}
\eta \in W_{1,loc}^1 (\R) \cap L_\infty(\R),
\qquad \sup_{t\in\R} \int_t^{t+1} |\eta'(y)| \, dy  < \infty ,
\end{equation}
\begin{equation}
\label{49}
V \in L_{1,loc}(\R), \qquad \sup_{t\in\R} \int_t^{t+1} |V(y)| \, dy  < \infty.
\end{equation}
\end{lemma}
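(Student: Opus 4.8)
The plan is to derive everything by explicit differentiation of the defining formulas \eqref{423}, \eqref{426}, \eqref{43}, combined with two facts about $\tilde\er,\tilde\mu$: they are bounded above and below away from zero by \eqref{45}, and their first derivatives are bounded. The latter is not part of the statement of Lemma \ref{l43}, so the first step is to secure it. Since $\tilde\er,\tilde\mu \in W_{1,loc}^2(\R)\cap L_\infty(\R)$ and satisfy \eqref{46}, Lemma \ref{l41} applied to $\tilde\er$ and to $\tilde\mu$ (in place of $\er$) yields $\tilde\er,\tilde\mu \in W_\infty^1(\R)$, that is $\|\tilde\er'\|_{L_\infty},\|\tilde\mu'\|_{L_\infty}<\infty$.

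Granting this, the $L_\infty$-bounds in \eqref{47}, \eqref{48} are immediate: because $x\mapsto x^{\pm1/4}$ is smooth on $[\er_0,\er_1]$ (resp. $[\mu_0,\mu_1]$), which stays away from $0$, the product $\nu=\tilde\er^{1/4}\tilde\mu^{-1/4}$ is bounded, while $\eta=\frac14(\tilde\er'/\tilde\er-\tilde\mu'/\tilde\mu)$ is bounded since each numerator is in $L_\infty$ and each denominator is bounded below. For the same reason — composition and product of $W_{1,loc}^2$ functions with $C^\infty$ maps on a compact interval avoiding the singularity — one gets $\nu\in W_{1,loc}^2$ and $\eta\in W_{1,loc}^1$, so that only the uniform local integrability of $\nu''$, $\eta'$ and $V$ remains.

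For those I would differentiate once more. A direct computation gives
\begin{equation*}
\eta' = \frac14\left(\frac{\tilde\er''}{\tilde\er} - \frac{(\tilde\er')^2}{\tilde\er^2} - \frac{\tilde\mu''}{\tilde\mu} + \frac{(\tilde\mu')^2}{\tilde\mu^2}\right), \qquad \nu'' = \nu\left(\eta^2 + \eta'\right).
\end{equation*}
Integrating $|\eta'|$ over an arbitrary $[t,t+1]$ and using \eqref{45}, the two second-derivative terms are dominated by $\sup_t\int_t^{t+1}(|\tilde\er''|+|\tilde\mu''|)\,dy$, finite by \eqref{46}, while the squared-first-derivative terms contribute at most $\frac14\bigl(\|\tilde\er'\|_{L_\infty}^2/\er_0^2+\|\tilde\mu'\|_{L_\infty}^2/\mu_0^2\bigr)$, a finite constant. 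This proves \eqref{48}; feeding it into $\nu''=\nu(\eta^2+\eta')$ together with $\|\nu\|_{L_\infty},\|\eta\|_{L_\infty}<\infty$ proves \eqref{47}. Finally, in $V=\eta^2-\eta'+\la\,\tilde\er^{-1}\tilde\mu^{-1}$ the first term is bounded, the second is handled by \eqref{48}, and the third is at most $\la/(\er_0\mu_0)$ on each unit interval; this gives \eqref{49}.

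The only genuinely non-mechanical point is the very first step: recognising that Lemma \ref{l41} must be invoked to upgrade the hypothesis from merely $\tilde\er,\tilde\mu\in W_{1,loc}^2$ to bounded first derivatives, since the estimate for $\eta'$ — and hence for $\nu''$ and for $V$ — relies crucially on $(\tilde\er')^2$, $(\tilde\mu')^2$ belonging to $L_\infty$. Everything after that is bookkeeping with the explicit formulas and the uniform local bound \eqref{46}.
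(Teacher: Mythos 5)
Your proof is correct and takes essentially the same route as the paper: both hinge on invoking Lemma \ref{l41} to upgrade to uniform $L_\infty$ bounds on first derivatives, and then on explicit differentiation combined with the bounds \eqref{45} and the uniform local estimate \eqref{46}. The only difference is the order of deduction --- the paper establishes \eqref{47} first (re-running the computation of Lemma \ref{l43} for $\nu$ and applying Lemma \ref{l41} to $\nu$ itself to get $\nu' \in L_\infty(\R)$), then obtains \eqref{48} from $\eta' = \nu''/\nu - (\nu')^2/\nu^2$, whereas you apply Lemma \ref{l41} to $\tilde\er$, $\tilde\mu$, prove \eqref{48} first, and recover \eqref{47} via $\nu'' = \nu(\eta^2 + \eta')$ --- an immaterial reordering.
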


\begin{proof}
The claim for $\nu$ follows from Lemma \ref{l43}.
Lemma \ref{l41} together with \eqref{47} yield $\nu' \in L_\infty(\R)$.
Moreover, $\nu$ is positive definite, therefore, $\eta \in L_\infty(\R)$.
Next,
$$
\eta'(y) = \frac{\nu''(y)}{\nu(y)} - \frac{\nu'(y)^2}{\nu(y)^2},
$$
which implies \eqref{48}.
Finally, \eqref{43} and \eqref{48} yield \eqref{49}.
\end{proof}

\subsection{Schr\" odinger operator}
Introduce the operator
$$
{\cal U} : p \mapsto u, \qquad
u(y) = \mu(z)^{1/4} \er(z)^{-1/4} p(z),
$$
where the variables $y$ and $z$ are related via \eqref{41}.
Then
\begin{equation}
\label{U}
\left({\cal U}^{-1} u\right) (z) = \nu(y) u(y),
\end{equation}
where the function $\nu$ is defined by \eqref{423}.

\begin{lemma}
\label{l45}
The operator
$$
{\cal U} : L_2(\R, \mu dz) \to L_2(\R)
$$
is an isometric isomorphism.
\end{lemma}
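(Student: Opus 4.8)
The plan is to verify the isometry by a single change of variables and then read off surjectivity from the explicit inverse \eqref{U}. First I would record that, by \eqref{01}, the weight in \eqref{41} is bounded above and below, $\sqrt{\er_0\mu_0}\le\sqrt{\er(z)\mu(z)}\le\sqrt{\er_1\mu_1}$. Hence the map $z\mapsto y(z)$ is bi-Lipschitz and, as already noted after \eqref{41}, a strictly increasing bijection of $\R$ onto itself with inverse $z(y)$. In particular the substitution rule $dy=\sqrt{\er(z)\mu(z)}\,dz$ of \eqref{d} is legitimate in Lebesgue integrals, which is the only analytic point one needs to be careful about; everything else is algebra.

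Next I would compute the $L_2(\R)$-norm of $u={\cal U}p$. Inserting $u(y)=\mu(z)^{1/4}\er(z)^{-1/4}p(z)$ and applying \eqref{d} gives
$$
\int_\R |u(y)|^2\,dy
= \int_\R \mu(z)^{1/2}\er(z)^{-1/2}|p(z)|^2\,\sqrt{\er(z)\mu(z)}\,dz
= \int_\R \mu(z)\,|p(z)|^2\,dz ,
$$
since the powers of $\er$ cancel and the powers of $\mu$ combine to $\mu(z)$. The right-hand side is exactly $\|p\|_{L_2(\R,\mu dz)}^2$, so ${\cal U}$ is norm-preserving, hence injective.

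Finally I would establish surjectivity by exhibiting a two-sided inverse. Given $u\in L_2(\R)$, define $p$ through \eqref{U}, that is $p(z)=\nu(y)u(y)$ with $\nu=\tilde\er^{1/4}\tilde\mu^{-1/4}$ from \eqref{423}. Because $\nu(y)=\er(z)^{1/4}\mu(z)^{-1/4}=\bigl(\mu(z)^{1/4}\er(z)^{-1/4}\bigr)^{-1}$, the pointwise assignments $p\mapsto u$ and $u\mapsto p$ are mutually inverse, and running the computation of the preceding paragraph backwards shows $p\in L_2(\R,\mu dz)$ with $\|p\|_{L_2(\R,\mu dz)}=\|u\|_{L_2(\R)}$. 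Thus ${\cal U}$ is a surjective isometry, i.e.\ an isometric isomorphism, as claimed. I do not expect a genuine obstacle here: the content is the change of variables \eqref{d} combined with the exact matching of the quarter-powers of $\er,\mu$ to the weight $\mu\,dz$, both of which are guaranteed by \eqref{01}.
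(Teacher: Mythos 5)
Your proof is correct and follows essentially the same route as the paper: the single change-of-variables computation $dy=\sqrt{\er\mu}\,dz$ with the cancellation of the quarter-powers of $\er$ and $\mu$ (the paper runs it from the $p$-side using $\nu$ and $\tilde\er,\tilde\mu$, you run it from the $u$-side, which is the same calculation). Your additional remarks on the bi-Lipschitz bijection $z\mapsto y$ and the explicit inverse \eqref{U} simply make explicit what the paper leaves implicit.
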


\begin{proof}
We have
$$
\int_\R |p(z)|^2 \mu(z) \, dz
= \int_\R \tilde \mu(y)^{-1/2} \tilde \er(y)^{1/2} |u(y)|^2 \tilde \mu(y) \frac{dy}{\sqrt{\tilde\er(y)\tilde\mu(y)}}
= \int_\R |u(y)|^2 dy .
\qquad \qedhere
$$
\end{proof}

Now, let us consider two quadratic forms.
The first one is
\begin{equation}
\label{495}
h[u] = \int_\R\left(|u'(y)|^2 + V(y) |u(y)|^2\right) dy, 
\qquad \dom h = W_2^1(\R)
\end{equation}
in the space $L_2(\R)$.
Here the function $V$ is defined by \eqref{43}.
It is well known (see, for example, \cite[Chapter 10, \S 6.1]{BS})
that under the condition \eqref{49} the form $h$ is closed and semi-bounded from below.
The corresponding self-adjoint operator is the Schr\" odinger operator 
$H = -\frac{d^2}{dy^2} + V(y)$.

The second quadratic form is 
$$
a [p] = \int_\R \er(z)^{-1} \left(|p'(z)|^2 + \la |p(z)|^2\right) dz,
\qquad \dom a = W_2^1(\R)
$$
in the space $L_2(\R,\mu dx)$.

\begin{lemma}
\label{l46}
Let the conditions of the Lemma \ref{l43} be fulfilled.
The operator ${\cal U}$ is a bijection of the space $W_2^1(\R)$ into itself, and
$$
a[p] = h[{\cal U} p] \qquad \forall \ p \in W_2^1(\R).
$$
\end{lemma}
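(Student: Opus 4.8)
The plan is to prove both assertions by carrying out the change of variables $z\mapsto y$ of \eqref{41} explicitly on the form $a$ and matching the result with $h$; the substitution is $p(z)=\nu(y)u(y)$ with $y=y(z)$, equivalently $u=\mathcal U p$.

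First I would settle the bijection. Write $u=\nu^{-1}\,(p\circ z)$, where $z=z(y)$ is the inverse change of variables. By Lemma \ref{l44} the function $\nu$ is bounded, positive definite and has $\nu'\in L_\infty(\R)$, hence $\nu,\nu^{-1}\in W_\infty^1(\R)$. By \eqref{45} the Jacobian $dz/dy=(\tilde\er\tilde\mu)^{-1/2}$ and its reciprocal are bounded above and below, so the change of variables maps $W_2^1(\R)$ bijectively onto itself, and multiplication by the $W_\infty^1$ factor $\nu^{\pm1}$ preserves $W_2^1(\R)$. Thus both $\mathcal U$ and $\mathcal U^{-1}$ send $W_2^1(\R)$ into $W_2^1(\R)$, so $\mathcal U$ is a bijection of this space.

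Next I would compute $a[p]$ after the substitution. Differentiating $p(z)=\nu(y)u(y)$ in $z$ and using $dy/dz=\sqrt{\tilde\er\tilde\mu}$ gives $p'(z)=\sqrt{\tilde\er\tilde\mu}\,(\nu'u+\nu u')$ (primes in $y$). Together with $\er^{-1}=\tilde\er^{-1}$, $dz=(\tilde\er\tilde\mu)^{-1/2}dy$, the identity $\nu^2=\sqrt{\tilde\er/\tilde\mu}$ and $\nu'/\nu=\eta$, the kinetic term becomes
$$
\int_\R \er^{-1}|p'|^2\,dz=\int_\R \nu^{-2}|\nu'u+\nu u'|^2\,dy=\int_\R |u'+\eta u|^2\,dy,
$$
while the same substitution turns the lower-order term into $\int_\R \la(\tilde\er\tilde\mu)^{-1}|u|^2\,dy$. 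Expanding $|u'+\eta u|^2=|u'|^2+\eta(|u|^2)'+\eta^2|u|^2$ (with $\eta$ real) and recalling the definition \eqref{43} of $V$, the claimed identity $a[p]=h[\mathcal U p]$ reduces to the integration-by-parts relation $\int_\R \eta(|u|^2)'\,dy=-\int_\R \eta'|u|^2\,dy$.

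The main obstacle is precisely this integration by parts, which must be justified under the weak regularity of $\eta$ supplied by \eqref{48}. For $u\in W_2^1(\R)$ one has $\phi:=|u|^2\in W_1^1(\R)$, hence $\phi$ is absolutely continuous and vanishes at $\pm\infty$ by the embedding $W_2^1(\R)\hookrightarrow C_0(\R)$, while $\eta$ is bounded and locally absolutely continuous; integrating by parts on $[-R,R]$ and letting $R\to\infty$ annihilates the boundary term $\eta\phi\big|_{-R}^{R}$. It then remains to verify $\eta'\phi\in L_1(\R)$, for which I would use the amalgam-type estimate
$$
\int_\R |\eta'|\,|\phi|\,dy\le \Big(\sup_{t}\int_t^{t+1}|\eta'(y)|\,dy\Big)\sum_{n\in\Z}\sup_{[n,n+1]}|\phi|\le C\,\Big(\sup_{t}\int_t^{t+1}|\eta'(y)|\,dy\Big)\,\|u\|_{W_2^1(\R)}^2 ,
$$
where the summability follows from $\sup_{[n,n+1]}|u|^2\le C\int_n^{n+1}(|u|^2+|u'|^2)\,dy$ and the hypothesis $\sup_t\int_t^{t+1}|\eta'|<\infty$ of \eqref{48}. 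This completes the identity and simultaneously reconfirms that $h[\mathcal U p]$ is finite, so that $\mathcal U$ indeed intertwines the two forms on $W_2^1(\R)$.
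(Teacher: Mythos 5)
Your proof is correct and follows essentially the same route as the paper's: the same substitution $p=\nu u$ under the change of variables \eqref{41}, the same expansion of the kinetic term into $|u'|^2+\eta\,(|u|^2)'+\eta^2|u|^2$ together with the lower-order term $\la(\tilde\er\tilde\mu)^{-1}|u|^2$, and the same integration by parts $\int_\R \eta\,(|u|^2)'\,dy=-\int_\R \eta'\,|u|^2\,dy$ to produce the potential \eqref{43}. The only difference is one of detail: you verify $\eta'|u|^2\in L_1(\R)$ via the amalgam-type estimate based on \eqref{48}, whereas the paper justifies the integration by parts with a one-line remark that $\eta$ is bounded and continuous and $u$ vanishes at infinity.
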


\begin{proof}
The map 
${\cal U} : W_2^1(\R) \to W_2^1(\R)$ is a bijection 
because $\er', \tilde \er', \mu', \tilde \mu' \in L_\infty(\R)$.

Let $u\in W_2^1(\R)$, and $p = {\cal U}^{-1} u$.
By \eqref{U} and \eqref{d} we have
$$
p'(z) = \left(\nu'(y)u(y) + \nu(y)u'(y)\right) \sqrt{\tilde\er(y)\tilde\mu(y)},
$$
and
\begin{eqnarray*}
\int_\R \er(z)^{-1} |p'(z)|^2 dz =
\int_\R \tilde\er(y)^{-1} \left|\nu'(y)u(y) + \nu(y)u'(y)\right|^2 \sqrt{\tilde\er(y)\tilde\mu(y)} \, dy \\
= \int_\R \left(\nu(y)^2 |u'(y)|^2 + \nu'(y)\nu(y) \left(u'(y)\overline{u(y)} + u(y)\overline{u'(y)}\right)
+ \nu'(y)^2 |u(y)|^2\right) \tilde\er(y)^{-1/2} \tilde\mu(y)^{1/2} dy \\
=: I_1 + I_2 + I_3 .
\end{eqnarray*}
Using the definition \eqref{423} we get
$$
I_1 = \int_\R |u'(y)|^2 dy .
$$
The definition \eqref{426} gives us
$$
I_2 = \int_\R \eta (y) \frac{d}{dy} |u(y)|^2 dy 
= - \int_\R \eta'(y) |u(y)|^2 dy .
$$
Note that the integration by parts is correct here because the function $\eta$ is continuous and bounded 
on the real line due to Lemma \ref{l44},
and the function $u$ tends to zero at infinity, as $u \in W_2^1(\R)$.
For the last integral the formulas \eqref{423}, \eqref{426} yield
$$
I_3 = \int_\R \eta(y)^2 |u(y)|^2 dy,
$$
so
\begin{equation}
\label{410}
\int_\R \er(z)^{-1} |p'(z)|^2 dz =
\int_\R|u'(y)|^2 dy + \int_\R \left(\eta(y)^2 - \eta'(y)\right) |u(y)|^2 dy.
\end{equation}
Finally,
\begin{equation}
\label{411}
\int_\R \er(z)^{-1} |p(z)|^2 dz =
\int_\R \tilde\er(y)^{-1} \nu(y)^2 |u(y)|^2 \frac{dy}{\sqrt{\tilde\er(y)\tilde\mu(y)}}
= \int_\R \frac{|u(y)|^2 dy}{\tilde\er(y)\tilde\mu(y)}.
\end{equation}
The formulas \eqref{410}, \eqref{411} and \eqref{43} imply the equality
$a[p] = h[{\cal U} p]$.
\end{proof}

Lemmas \ref{l45} and \ref{l46} imply that the operator $A_k^{el}$ in Theorem \ref{t13}
is unitarily equivalent to the operator $H_k^{el} = -\frac{d^2}{dy^2} + V_k^{el} (y)$ in $L_2(\R)$,
where the potential $V_k^{el}$ is defined by the formula \eqref{43} with $\la = \la_k$.
The operator $A^0$ in Theorem \ref{t13} is the operator $A_k^{el}$ 
after the substitution $\la_k = 0$.
Therefore, $A^0$ is unitarily equivalent to the operator $H^0 = -\frac{d^2}{dy^2} + V^0 (y)$
with $V^0$ defined by the formula \eqref{43} with $\la = 0$.
The operator $A_l^m$ in Theorem \ref{t13} can be obtained from the operator $A_k^{el}$
by the changes $\er \leftrightarrow \mu$ and $\la_k \leftrightarrow \ka_l$.
Therefore, $A_l^m$ is unitarily equivalent to the operator $H_l^m = -\frac{d^2}{dy^2} + V_l^m (y)$,
the potential $V_l^m$ is obtained from $V_k^{el}$ by the same changes.
Now, Theorem \ref{t13} implies

\begin{theorem}
\label{t47}
Let $U \subset \R^2$ be a bounded connected domain, $\dd U \in \lip$,
the boundary $\dd U$ consists of $N$ connected components,
$\Pi = U \times \R$.
Let the coefficients  $\er$, $\mu$ be scalar real functions satisfying \eqref{01}, \eqref{02} and \eqref{03}.
Then the square ${\cal M}^2$ of the Maxwell operator is unitarily equivalent to the orthogonal sum
\begin{equation}
\label{616}
\left(\bigoplus_{k=1}^\infty H^{el}_k\right) \bigoplus \left(\bigoplus_{k=1}^\infty H^{el}_k\right) 
\bigoplus \left(\bigoplus_{l=2}^\infty H^m_l\right) \bigoplus \left(\bigoplus_{l=2}^\infty H^m_l\right)
\bigoplus \left(\bigoplus_{j=1}^{2N-2} H^0\right) .
\end{equation}
Here $H_k^{el}$, $H_l^m$, $H^0$ are self-adjoint operators in $L_2(\R)$,
$$
H_k^{el} = -\frac{d^2}{dy^2} + V_k^{el} (y), 
\qquad H_l^m = -\frac{d^2}{dy^2} + V_l^m (y),
\qquad H^0 = -\frac{d^2}{dy^2} + V^0 (y),
$$
\begin{equation}
\label{4e}
V_k^{el} (y) = \eta(y)^2 - \eta'(y) + \la_k \tilde\er(y)^{-1} \tilde\mu(y)^{-1},
\end{equation}
\begin{equation}
\label{4m}
V_l^m (y) = \eta(y)^2 + \eta'(y) + \ka_l \tilde\er(y)^{-1} \tilde\mu(y)^{-1},
\end{equation}
\begin{equation}
\label{40}
V^0 (y) = \eta(y)^2 - \eta'(y) ,
\end{equation}
the function $\eta$ is defined by \eqref{41}, \eqref{42}, \eqref{423} and \eqref{426}.
If the cross-section $U$ is simply connected ($N=1$), the summands $H^0$ in \eqref{616} are absent.
\end{theorem}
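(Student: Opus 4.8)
The plan is to combine the structural decomposition of Theorem \ref{t13} with the change-of-variables machinery developed in \S 6 (Lemmas \ref{l45} and \ref{l46}). Theorem \ref{t13} already tells me that ${\cal M}^2$ is unitarily equivalent to the orthogonal sum \eqref{11}, namely two copies each of $\bigoplus_k A_k^{el}$ and $\bigoplus_l A_l^m$, together with $2N-2$ copies of $A^0$. So the entire content of Theorem \ref{t47} is to show that, under the additional smoothness hypothesis \eqref{03}, each summand $A_k^{el}$, $A_l^m$, $A^0$ is unitarily equivalent to the stated Schr\"odinger operator on $L_2(\R)$, with the potentials \eqref{4e}, \eqref{4m}, \eqref{40}. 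The unitary equivalence of the full sums then follows termwise, and the multiplicities in \eqref{616} are inherited verbatim from \eqref{11}.

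First I would treat $A_k^{el}$. The key point is that $A_k^{el}$ is the self-adjoint operator in $L_2(\R,\mu\,dz)$ generated by the quadratic form $a[p]=\int_\R \er^{-1}\bigl(|p'|^2+\la_k|p|^2\bigr)\,dz$ on $\dom a=W_2^1(\R)$, exactly the form appearing just before Lemma \ref{l46} with $\la=\la_k$. By Lemma \ref{l45} the map ${\cal U}$ is an isometric isomorphism from $L_2(\R,\mu\,dz)$ onto $L_2(\R)$, and by Lemma \ref{l46} it carries the form $a$ into the form $h[u]=\int_\R\bigl(|u'|^2+V|u|^2\bigr)\,dy$ with $V$ given by \eqref{43} with $\la=\la_k$. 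Since two self-adjoint operators whose forms are carried into one another by a unitary map are themselves unitarily equivalent, I conclude that $A_k^{el}\cong H_k^{el}=-d^2/dy^2+V_k^{el}$ with $V_k^{el}$ as in \eqref{4e}. Before invoking this I should record that the hypotheses \eqref{03} are exactly what Lemma \ref{l43} needs, so that $\tilde\er,\tilde\mu$ inherit the regularity \eqref{45}--\eqref{46}, and Lemma \ref{l44} then guarantees $V\in L_{1,loc}$ with the uniform local bound \eqref{49}; this is what makes the form $h$ closed and semibounded and legitimizes the integration by parts in Lemma \ref{l46}. The operator $A^0$ is simply $A_k^{el}$ with $\la_k$ replaced by $0$, so the identical argument yields $A^0\cong H^0=-d^2/dy^2+V^0$ with $V^0$ as in \eqref{40}.

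For the magnetic operators $A_l^m$ I would exploit the $\er\leftrightarrow\mu$ symmetry. The operator $A_l^m$ acts in $L_2(\R,\er\,dz)$ and is obtained from $A_k^{el}$ by interchanging the roles of $\er$ and $\mu$ and replacing $\la_k$ by $\ka_l$; correspondingly its quadratic form is $\int_\R\mu^{-1}\bigl(|q'|^2+\ka_l|q|^2\bigr)\,dz$. Applying the whole change-of-variables apparatus with $\er$ and $\mu$ swapped, I obtain $A_l^m\cong H_l^m=-d^2/dy^2+V_l^m$. The one subtlety worth flagging is the sign of the $\eta'$ term: the logarithmic derivative $\eta=\tfrac14(\tilde\er'/\tilde\er-\tilde\mu'/\tilde\mu)$ from \eqref{426} is \emph{antisymmetric} under $\er\leftrightarrow\mu$, so $\eta\mapsto-\eta$, which leaves $\eta^2$ invariant but flips the sign of $\eta'$. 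This is exactly why \eqref{4m} carries $+\eta'$ where \eqref{4e} carries $-\eta'$, while the diagonal term $\ka_l\,\tilde\er^{-1}\tilde\mu^{-1}$ is symmetric and unchanged.

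The main obstacle, such as it is, is not conceptual but a matter of bookkeeping: I must verify that the regularity hypotheses propagate correctly through the chain of lemmas so that every step is justified. Concretely, the nontrivial work was already discharged in Lemma \ref{l46}, where the identity $a[p]=h[{\cal U}p]$ is proved by direct computation together with the integration by parts that produces the $-\eta'$ term; the hardest single estimate behind all of this is the uniform-local-integrability bound \eqref{46} for $\tilde\er''$, $\tilde\mu''$ established in Lemma \ref{l43}, since it is what ultimately guarantees $V\in L_{1,loc}$ and hence that $H$ is a well-defined self-adjoint Schr\"odinger operator. Given those lemmas, the proof of Theorem \ref{t47} is the short assembly described above: apply the equivalences $A_k^{el}\cong H_k^{el}$, $A_l^m\cong H_l^m$, $A^0\cong H^0$ summand by summand to the decomposition \eqref{11} of Theorem \ref{t13}, and read off \eqref{616} with its multiplicities, noting that the $2N-2$ copies of $A^0$ disappear precisely when $N=1$.
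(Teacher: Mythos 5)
Your proposal is correct and follows essentially the same route as the paper: Theorem \ref{t13} supplies the decomposition into the operators $A_k^{el}$, $A_l^m$, $A^0$, and Lemmas \ref{l43}--\ref{l46} (the change of variables \eqref{41}, the unitary map ${\cal U}$, and the form identity $a[p]=h[{\cal U}p]$) convert each summand into the corresponding Schr\"odinger operator, with $A^0$ handled by the substitution $\la=0$ and $A_l^m$ by the interchange $\er\leftrightarrow\mu$, $\la_k\leftrightarrow\ka_l$. Your explicit observation that $\eta$ is antisymmetric under this interchange, which flips the sign of $\eta'$ and produces \eqref{4m}, is exactly the point the paper leaves implicit in the phrase ``obtained from $V_k^{el}$ by the same changes.''
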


Theorem \ref{t47} and Lemma \ref{l32} yield
\begin{cor}
\label{c48}
Under the conditions of Theorem \ref{t47} the Maxwell operator ${\cal M}$ 
is unitarily equivalent to the orthogonal sum
\begin{eqnarray*}
\left(\bigoplus_{k=1}^\infty \sqrt{H^{el}_k}\right) \bigoplus \left(\bigoplus_{k=1}^\infty -\sqrt{H^{el}_k}\right) 
\bigoplus \left(\bigoplus_{l=2}^\infty \sqrt{H^m_l}\right) \bigoplus \left(\bigoplus_{l=2}^\infty -\sqrt{H^m_l}\right)\\
\bigoplus \left(\bigoplus_{j=1}^{N-1} \sqrt{H^0}\right) \bigoplus \left(\bigoplus_{j=1}^{N-1} -\sqrt{H^0}\right) .
\end{eqnarray*}
\end{cor}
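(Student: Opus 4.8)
The plan is to read off the corollary by composing the reductions already established, the only genuinely new observation being that the operation of taking a non-negative square root commutes with unitary conjugation. I would start from Theorem~\ref{t31}, which presents ${\cal M}$ as the orthogonal sum
$$
\left(\bigoplus_{k=1}^\infty {\cal M}_k^{el}\right) \bigoplus \left(\bigoplus_{l=2}^\infty {\cal M}_l^m\right) \bigoplus \left(\bigoplus_{j=1}^{N-1} {\cal M}_j^0\right),
$$
so that it suffices to identify each block up to unitary equivalence.

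First I would invoke Lemmas~\ref{l33}, \ref{l34} and \ref{l35}, which state exactly that ${\cal M}_k^{el}$ is unitarily equivalent to $\sqrt{A_k^{el}} \oplus -\sqrt{A_k^{el}}$, that ${\cal M}_l^m \cong \sqrt{A_l^m} \oplus -\sqrt{A_l^m}$, and that ${\cal M}_j^0 \cong \sqrt{A^0} \oplus -\sqrt{A^0}$ (these lemmas are where Lemma~\ref{l32} enters). Substituting into the sum above replaces it by the orthogonal sum of the operators $\pm\sqrt{A_k^{el}}$, $\pm\sqrt{A_l^m}$ and $\pm\sqrt{A^0}$, the index $j$ contributing $N-1$ copies of each sign of $\sqrt{A^0}$.

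It remains to pass from $A_k^{el}$, $A_l^m$, $A^0$ to the Schr\"odinger operators $H_k^{el}$, $H_l^m$, $H^0$. Under hypothesis \eqref{03} this is furnished by Lemmas~\ref{l45} and \ref{l46}: the unitary ${\cal U}$ they construct intertwines the quadratic forms and hence the operators, giving $A_k^{el} \cong H_k^{el}$, while the same change of variables (with $\er \leftrightarrow \mu$, respectively $\la_k \mapsto 0$) gives $A_l^m \cong H_l^m$ and $A^0 \cong H^0$; these are precisely the identifications used to deduce Theorem~\ref{t47}. The point worth stating explicitly is that all of these operators are non-negative (by \eqref{468}, \eqref{4681}, and $A^0 \ge 0$), so their non-negative square roots are well defined, and square roots respect unitary equivalence: if $W A W^{-1} = H$ with $A, H \ge 0$ and $W$ unitary, then $W f(A) W^{-1} = f(H)$ for every Borel $f$ by the spectral theorem, and the choice $f(t) = \sqrt{t}$ yields $W\sqrt{A}W^{-1} = \sqrt{H}$. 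Hence $\sqrt{A_k^{el}} \cong \sqrt{H_k^{el}}$, and likewise for the magnetic and zero-mode blocks; feeding these equivalences into the sum from the previous paragraph and rewriting the $N-1$ zero-mode copies produces exactly the orthogonal sum asserted in the corollary. I do not expect a genuine obstacle here, since every ingredient is already in place; the sole non-mechanical step is the one-line spectral-calculus remark that conjugation by a unitary commutes with the square root.
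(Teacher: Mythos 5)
Your proposal is correct and follows essentially the same route as the paper: the paper derives Corollary \ref{c48} in one line from Theorem \ref{t47} and Lemma \ref{l32}, which is exactly the chain you unpack explicitly (Theorem \ref{t31}, the $\pm\sqrt{\,\cdot\,}$ block decompositions of Lemmas \ref{l33}--\ref{l35}, and the unitary equivalences $A\cong H$ from Lemmas \ref{l45}, \ref{l46}). Your added remark that unitary conjugation commutes with the square root via the spectral theorem is the implicit glue in the paper's argument as well, so there is no substantive difference.
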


\section{Coefficients tending to constants at infinity}
We will use the following well-known result (see for example \cite[Chapter 5]{Ya2}).

\begin{theorem}
\label{t51}
Let $W \in L_1(\R)$.
Let $H = -\frac{d^2}{dy^2} + W(y)$ be the self-adjoint operator in $L_2(\R)$
corresponding to the quadratic form
$$
h[u] = \int_\R\left(|u'(y)|^2 + W(y) |u(y)|^2\right) dy, 
\qquad \dom h = W_2^1(\R).
$$
Then
$$
\si_{ac}(H) = [0,+\infty), \qquad
\si_{sc}(H) = \emptyset \qquad
\text{and} \qquad \si_p(H) \cap (0,+\infty) = \emptyset .
$$
\end{theorem}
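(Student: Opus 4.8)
The plan is to treat $H = H_0 + W$ as a short-range perturbation of the free operator $H_0 = -\frac{d^2}{dy^2}$ on $W_2^2(\R)$, whose spectrum is purely absolutely continuous, $\si(H_0) = \si_{ac}(H_0) = [0,+\infty)$ with multiplicity two. The hypothesis $W \in L_1(\R)$ is precisely the classical one-dimensional short-range condition, and the assertion is the standard spectral output of scattering theory for such potentials; I would reconstruct it through Jost solutions and the limiting absorption principle.

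First I would build the Jost solutions. For $\im k \ge 0$, $k \neq 0$, the Volterra integral equations with free term $e^{\pm iky}$ and kernel $k^{-1}\sin k(t-y)\,W(t)$ produce solutions $f_\pm(y,k)$ of $-f'' + Wf = k^2 f$ with $f_\pm(y,k) = e^{\pm iky}(1+o(1))$ as $y \to \pm\infty$; the bound $\int_\R |W| < \infty$ gives convergence of the iteration, analyticity of $f_\pm(\cdot,k)$ for $\im k > 0$, and continuity up to the real axis. Writing the Green function of $H$ through $f_+$, $f_-$ and their Wronskian $w(k)$, and checking $w(k) \neq 0$ for real $k \neq 0$ (a flux/Wronskian identity, the same mechanism that excludes positive eigenvalues), yields the limiting absorption principle: the boundary values $(H - \la \mp i0)^{-1}$ exist and are norm-continuous in $\la$ as maps $L_2(\R,(1+|y|)^{s}\,dy) \to L_2(\R,(1+|y|)^{-s}\,dy)$ for $s>1/2$ and $\la$ in any compact subinterval of $(0,+\infty)$. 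By Stone's formula this forces the spectral measure of $H$ to be purely absolutely continuous there; equivalently, the wave operators $\O_\pm$, the strong limits of $e^{itH}e^{-itH_0}$ as $t\to\pm\infty$, exist and are complete, so $H|_{ac}$ is unitarily equivalent to $H_0$. Hence $\si_{ac}(H) = [0,+\infty)$ and $\si_{sc}(H) \cap (0,+\infty) = \emptyset = \si_p(H) \cap (0,+\infty)$.

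For the absence of positive eigenvalues I would in any case record the direct ODE argument. Writing $-u'' + Wu = k^2 u$ with $k > 0$ as a first-order system and applying Levinson's asymptotic integration theorem (legitimate since $W \in L_1$), one gets a fundamental system with asymptotics $e^{\pm iky}(1+o(1))$ at $+\infty$, so every solution satisfies $u(y) = c_+ e^{iky} + c_- e^{-iky} + o(1)$; an $L_2$ solution forces $c_+ = c_- = 0$ and therefore $u \equiv 0$. Thus no $\la = k^2 > 0$ is an eigenvalue.

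It remains to control $(-\infty,0]$ to upgrade to $\si_{sc}(H) = \emptyset$. By the Birman--Schwinger principle a value $-\ka^2 < 0$ is an eigenvalue of $H$ only when $1$ is an eigenvalue of $|W|^{1/2}(H_0+\ka^2)^{-1}|W|^{1/2}$, whose kernel $|W(x)|^{1/2}\frac{e^{-\ka|x-t|}}{2\ka}|W(t)|^{1/2}$ makes it Hilbert--Schmidt with norm $\le \|W\|_{L_1}/(2\ka)$ for every $\ka > 0$; hence its eigenvalues form a discrete set accumulating only at $0$, so the negative spectrum of $H$ is discrete (finite-multiplicity eigenvalues accumulating at most at $0$) and $\si_{ess}(H) = [0,+\infty)$. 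Consequently $\si(H)\cap(-\infty,0)$ is pure point, and since a singular continuous measure has no atoms it cannot live on the single remaining point $\{0\}$; therefore $\si_{sc}(H) = \emptyset$. The genuinely hard step is the limiting absorption principle of the second paragraph — the uniform boundary control of the resolvent, resting on the nonvanishing of $w(k)$ and on the continuity estimates for the Jost solutions up to the real axis; everything else is comparatively soft.
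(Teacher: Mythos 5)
The paper offers no proof of this theorem at all — it is invoked as a well-known result with the citation \cite[Chapter 5]{Ya2} — and your reconstruction via Jost solutions, nonvanishing of the Wronskian, the limiting absorption principle, Levinson asymptotics for positive energies, and the Birman--Schwinger/Weyl argument on the negative half-line is precisely the standard argument that citation points to. Your proof is correct (up to the routine insertion of $\operatorname{sgn}W$ in the Birman--Schwinger operator for sign-indefinite potentials), so it soundly fills in what the paper delegates to the literature.
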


Note that negative eigenvalues can occur.

\begin{lemma}
\label{l52}
Let the coefficients $\er$, $\mu$ satisfy \eqref{01} and \eqref{02}.
Assume moreover that there are two constants $\er_*$, $\mu_*$ such that
\begin{equation}
\label{51}
\er - \er_* \in W_1^2 (\R), \qquad \mu - \mu_* \in W_1^2(\R) .
\end{equation}
Then
$$
\tilde \er - \er_* \in W_1^2 (\R), \qquad \tilde \mu - \mu_* \in W_1^2(\R) ,
$$
where the functions $\tilde\er$, $\tilde\mu$ are defined by \eqref{41}, \eqref{42}.
\end{lemma}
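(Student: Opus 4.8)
The plan is to check that each of the three functions $\tilde\er-\er_*$, $\tilde\er'$, $\tilde\er''$ belongs to $L_1(\R)$, after which the claim for $\tilde\mu$ will follow by the symmetry $\er\leftrightarrow\mu$. The whole computation rests on the change of variables \eqref{d}: an integral over the $y$-line of a quantity expressed through $z$ acquires the Jacobian factor $\sqrt{\er(z)\mu(z)}$, and by \eqref{01} this factor is bounded both above and below. It therefore never destroys integrability, and, more importantly, it cancels precisely the negative powers of $\er$ and $\mu$ produced in the formulas \eqref{461} and \eqref{462} for $\tilde\er'$ and $\tilde\er''$.

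First I would record that $\er',\mu'\in L_\infty(\R)$. This is exactly Lemma \ref{l41}, whose hypotheses hold here because $\er''\in L_1(\R)$ forces $\sup_{s}\int_s^{s+1}|\er''(t)|\,dt\le\|\er''\|_{L_1(\R)}<\infty$, while $\er\in L_\infty(\R)$ is part of \eqref{01}; the same applies to $\mu$.

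The zeroth-order and first-order terms are then immediate. For the first, $\int_\R|\tilde\er(y)-\er_*|\,dy=\int_\R|\er(z)-\er_*|\sqrt{\er\mu}\,dz\le\sqrt{\er_1\mu_1}\,\|\er-\er_*\|_{L_1(\R)}<\infty$ by \eqref{51}. For the second, formula \eqref{461} gives the exact cancellation $\int_\R|\tilde\er'(y)|\,dy=\int_\R|\er'(z)|\,dz<\infty$. For the second derivative, inserting \eqref{462} and using the two-sided bounds \eqref{01} on all denominators leaves $\int_\R|\tilde\er''(y)|\,dy\le C\int_\R\bigl(|\er''|+|\er'|^2+|\er'\mu'|\bigr)\,dz$.

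The only step that genuinely uses the preliminary $L_\infty$ bound — and the main (essentially the sole) obstacle — is the integrability of the two nonlinear terms on the right. Here I would estimate $|\er'|^2\le\|\er'\|_{L_\infty}|\er'|$ and $|\er'\mu'|\le\|\mu'\|_{L_\infty}|\er'|$, both of which lie in $L_1(\R)$ since $\er'\in L_1(\R)$ by \eqref{51}. Together with $\er''\in L_1(\R)$ this shows $\tilde\er''\in L_1(\R)$, so $\tilde\er-\er_*\in W_1^2(\R)$; interchanging $\er$ and $\mu$ gives $\tilde\mu-\mu_*\in W_1^2(\R)$.
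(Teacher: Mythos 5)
Your proof is correct and takes essentially the same route as the paper: bounded two-sided Jacobian for the zeroth-order term, the exact cancellation in \eqref{461} for the first derivative, and control of the quadratic terms in \eqref{462} using $\er',\mu'\in L_1(\R)\cap L_\infty(\R)$. The only cosmetic difference is that the paper converts the $L_\infty$ bound into $\er',\mu'\in L_2(\R)$ and then handles $|\er'|^2$ and $|\er'\mu'|$ by squaring/Cauchy--Schwarz, whereas you bound these terms pointwise by $\|\er'\|_{L_\infty}|\er'|$ and $\|\mu'\|_{L_\infty}|\er'|$ --- the same interpolation idea.
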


Clearly, the relations \eqref{51} imply that 
$\er_0\le\er_*\le\er_1$, $\mu_0\le\mu_*\le\mu_1$ and \eqref{03}.

\begin{proof}
The conditions $\er - \er_*, \mu - \mu_* \in L_1(\R)$ and the boundedness of the Jacobian of the map $z\mapsto y$ imply
$$
\tilde \er - \er_*, \tilde \mu - \mu_* \in L_1(\R) .
$$
The conditions $\er', \mu' \in L_1(\R)$ and the formula \eqref{461} imply $\tilde\er', \tilde\mu' \in L_1(\R)$.
Next, \eqref{51} yields $\er', \mu' \in L_\infty(\R)$, and therefore, $\er', \mu' \in L_2(\R)$.
Now, the inclusion $\tilde\er'', \tilde\mu'' \in L_1(\R)$ results from \eqref{51} and \eqref{462}.
So, 
$$
\tilde \er - \er_* \in W_1^2 (\R), \qquad \tilde \mu - \mu_* \in W_1^2(\R) . 
\qquad \qedhere
$$
\end{proof}

\begin{lemma}
\label{l53}
Let $\er$, $\mu$ satisfy \eqref{01}, \eqref{02} and \eqref{51}.
Then the potential $V$ defined by \eqref{43} satisfies
$$
V - \frac\la{\er_*\mu_*} \in L_1(\R).
$$
\end{lemma}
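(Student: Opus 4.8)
The plan is to write $V-\dfrac{\la}{\er_*\mu_*}$ as a sum of three contributions coming from the three terms in the definition \eqref{43}, namely $\eta^2$, $-\eta'$, and $\la\bigl(\tilde\er^{-1}\tilde\mu^{-1}-(\er_*\mu_*)^{-1}\bigr)$, and to show that each of them lies in $L_1(\R)$ separately. The basic input is Lemma \ref{l52}: under \eqref{51} one has $\tilde\er-\er_*,\tilde\mu-\mu_*\in W_1^2(\R)$, whence $\tilde\er',\tilde\mu',\tilde\er'',\tilde\mu''\in L_1(\R)$, and moreover $\tilde\er',\tilde\mu'\in L_\infty(\R)$ (this is recorded already in Lemma \ref{l44}, and follows from $\tilde\er'\in W_1^1(\R)$). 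Since $\tilde\er,\tilde\mu$ are bounded away from zero by \eqref{45}, every denominator appearing below is harmless.

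For the last term I would write
$$
\frac{1}{\tilde\er\tilde\mu}-\frac{1}{\er_*\mu_*}
=\frac{\mu_*(\er_*-\tilde\er)+\tilde\er(\mu_*-\tilde\mu)}{\tilde\er\tilde\mu\,\er_*\mu_*},
$$
and observe that the numerator is in $L_1(\R)$ because $\tilde\er-\er_*,\tilde\mu-\mu_*\in L_1(\R)$ and $\tilde\er$ is bounded, while the denominator is bounded below; hence this contribution is in $L_1(\R)$.

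For the term $\eta^2$ I would use \eqref{426} to see that $\eta=\frac14(\tilde\er'/\tilde\er-\tilde\mu'/\tilde\mu)$ is a combination of quotients with numerators in $L_1(\R)\cap L_\infty(\R)$ and denominators bounded below; thus $\eta\in L_1(\R)\cap L_\infty(\R)$, and consequently $\eta^2=\eta\cdot\eta\in L_1(\R)$ (one factor in $L_\infty$, the other in $L_1$). For $\eta'$ I would differentiate \eqref{426} to get
$$
\eta'=\frac14\left(\frac{\tilde\er''}{\tilde\er}-\frac{(\tilde\er')^2}{\tilde\er^2}
-\frac{\tilde\mu''}{\tilde\mu}+\frac{(\tilde\mu')^2}{\tilde\mu^2}\right),
$$
and treat the four terms in turn: the two involving second derivatives are in $L_1(\R)$ since $\tilde\er'',\tilde\mu''\in L_1(\R)$ and the denominators are bounded below, while the two involving squared first derivatives are in $L_1(\R)$ since $\tilde\er',\tilde\mu'\in L_1(\R)\cap L_\infty(\R)\subset L_2(\R)$ makes their squares integrable.

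I expect the only delicate point to be the boundedness of the first derivatives $\tilde\er',\tilde\mu'$, which is precisely what upgrades their $L_1$-membership to $L_2$-membership and thereby controls the quadratic terms $(\tilde\er')^2$, $(\tilde\mu')^2$ and $\eta^2$; this is exactly the content already secured by Lemma \ref{l41} and Lemma \ref{l44}, so no new estimate is required. Everything else reduces to routine bookkeeping of $L_1$-membership using the positivity and uniform boundedness of the coefficients.
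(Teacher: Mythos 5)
Your proposal is correct and follows essentially the same route as the paper: both split $V-\la/(\er_*\mu_*)$ into the terms $\eta^2$, $-\eta'$, and $\la\bigl(\tilde\er^{-1}\tilde\mu^{-1}-(\er_*\mu_*)^{-1}\bigr)$ and reduce everything to Lemma \ref{l52} plus the lower bounds \eqref{45}. The paper merely packages your explicit computation of $\eta'$ into the single statement $\eta\in W_1^1(\R)$ (whence $\eta'\in L_1$ and $\eta\in L_2$, so $\eta^2\in L_1$), which is exactly the bookkeeping you carry out by hand.
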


\begin{proof}
We have
\begin{equation*}
\frac1{\tilde\er\tilde\mu} - \frac1{\er_*\mu_*} \in L_1(\R).
\end{equation*}
By the precedent Lemma
$$
\frac{\tilde\er'}{\tilde\er}, \frac{\tilde\mu'}{\tilde\mu} \in W_1^1(\R) 
\qquad \Rightarrow \qquad \eta \in W_1^1(\R),
$$
where the function $\eta$ is defined by \eqref{426}.
Therefore, $\eta \in L_2(\R)$.
Now, the claim follows.
\end{proof}

\begin{cor}
Let $\er$, $\mu$ satisfy \eqref{01}, \eqref{02} and \eqref{51}.
Then the potentials defined by \eqref{4e}, \eqref{4m} and \eqref{40} satisfy the relations
$$
V_k^{el} - \frac{\la_k}{\er_*\mu_*}, \quad
V_l^m - \frac{\ka_l}{\er_*\mu_*}, \quad V^0 \in L_1(\R).
$$
\end{cor}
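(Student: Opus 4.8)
The plan is to read off all three relations directly from Lemma \ref{l53}, whose proof has already isolated exactly the ingredients we need: it establishes $\eta \in W_1^1(\R)$ (so that $\eta, \eta' \in L_1(\R)$), that $\eta \in L_2(\R)$ (so that $\eta^2 \in L_1(\R)$), and that $\frac1{\tilde\er\tilde\mu} - \frac1{\er_*\mu_*} \in L_1(\R)$. Everything below is bookkeeping built on these three facts.

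First I would dispatch $V_k^{el}$ and $V^0$. Comparing \eqref{4e} and \eqref{40} with the template \eqref{43}, one sees that $V_k^{el}$ is precisely the potential $V$ of Lemma \ref{l53} taken with $\la = \la_k$, while $V^0$ is the same potential with $\la = 0$. Hence Lemma \ref{l53} gives $V_k^{el} - \frac{\la_k}{\er_*\mu_*} \in L_1(\R)$ and $V^0 \in L_1(\R)$ with no additional argument.

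Next I would treat $V_l^m$, whose formula \eqref{4m} differs from the template only in the sign of the $\eta'$ term and in carrying $\ka_l$ in place of $\la_k$. I would write
$$
V_l^m - \frac{\ka_l}{\er_*\mu_*} = \eta^2 + \eta' + \ka_l\left(\frac1{\tilde\er\tilde\mu} - \frac1{\er_*\mu_*}\right),
$$
and note that each summand lies in $L_1(\R)$ by the three ingredients above, the sign in front of $\eta'$ being irrelevant to $L_1$-membership and the constant factor $\ka_l$ being harmless. Equivalently, $V_l^m$ arises from $V_k^{el}$ under the interchange $\er \leftrightarrow \mu$, $\la_k \leftrightarrow \ka_l$, which preserves the symmetric hypothesis \eqref{51} and sends $\eta \mapsto -\eta$; so Lemma \ref{l53} applies verbatim to the interchanged coefficients. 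The only point requiring a moment's care is confirming that $\eta' \in L_1(\R)$ still holds in the $V_l^m$ setting, but this is automatic since the proof of Lemma \ref{l53} derives $\eta \in W_1^1(\R)$ from a hypothesis that is itself invariant under $\er \leftrightarrow \mu$. There is accordingly no substantive obstacle: the corollary is an immediate consequence of the lemma once the three $L_1$-ingredients are recorded.
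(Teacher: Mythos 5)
Your proof is correct and takes essentially the same approach as the paper, which states this corollary without proof as an immediate consequence of Lemma \ref{l53}. Your explicit decomposition of $V_l^m - \frac{\ka_l}{\er_*\mu_*}$, with the observation that the sign of $\eta'$ and the constant factor $\ka_l$ are harmless for $L_1$-membership, is precisely the bookkeeping the paper leaves implicit.
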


Theorem \ref{t51} and Lemma \ref{l53} mean that for the operator
\begin{equation*}
H = - \frac{d^2}{dy^2} + V(y)
\end{equation*}
defined via the quadratic form \eqref{495}
we have
\begin{equation}
\label{73}
\si_{sc}(H) = \emptyset, \quad
\si_{ac}(H) = \left[\frac\la{\er_*\mu_*}, + \infty\right),
\quad \si_p (H) \cap \left(\frac\la{\er_*\mu_*}, + \infty\right) = \emptyset .
\end{equation}
This together with the inequality \eqref{Fr} and Corollary \ref{c48} yield

\begin{cor}
\label{c54}
Let $\er$, $\mu$ satisfy \eqref{01}, \eqref{02} and \eqref{51}.
The singular continuous component in the spectrum of the Maxwell operator is absent,
$\si_{sc} ({\cal M}) = \emptyset$.
If the cross-section $U$ is multiply connected then 
$\si_{ac} ({\cal M}) = \R$.
If the cross-section $U$ is simply connected then
$$
\si_{ac} ({\cal M}) =  \left(-\infty, -\sqrt{\frac{\ka_2}{\er_*\mu_*}}\right] \cup 
\left[\sqrt{\frac{\ka_2}{\er_*\mu_*}}, +\infty\right) .
$$
\end{cor}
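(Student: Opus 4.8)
The plan is to read the spectrum of ${\cal M}$ off Corollary \ref{c48} directly. Since \eqref{51} implies \eqref{03}, the hypotheses of Theorem \ref{t47} hold, so Corollary \ref{c48} represents ${\cal M}$ as the orthogonal sum of the operators $\pm\sqrt{H_k^{el}}$ ($k\ge1$), $\pm\sqrt{H_l^m}$ ($l\ge2$), and, when $N\ge2$, $\pm\sqrt{H^0}$ ($j=1,\dots,N-1$). Each of $H_k^{el}$, $H_l^m$, $H^0$ is one of the Schr\"odinger operators to which \eqref{73} applies, with $\la$ taken to be $\la_k$, $\ka_l$, and $0$ respectively. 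So the first task is to transport the spectral information \eqref{73} through the square root, and the second is to take the union of the resulting half-lines, using the symmetry of the spectrum about zero.

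Second, I would note that each of $H_k^{el}$, $H_l^m$, $H^0$ is nonnegative, being unitarily equivalent to $A_k^{el}$, $A_l^m$, $A^0$, which are nonnegative by \eqref{468}, \eqref{4681} and the form of $A^0$. Hence $\sqrt{H}$ is well defined and $\si(\sqrt H)$ is the image of $\si(H)$ under $t\mapsto\sqrt t$. The essential point is that this map is a $C^\infty$-diffeomorphism of $(0,+\infty)$ onto itself, so the pushforward of the scalar spectral measures preserves both the absolutely continuous and the singular continuous classes; a mass sitting at the single point $0$ is purely atomic and so is irrelevant for $\si_{sc}$. Consequently \eqref{73} gives
$$
\si_{ac}\big(\sqrt{H_k^{el}}\big)=\Big[\sqrt{\tfrac{\la_k}{\er_*\mu_*}},+\infty\Big),\quad \si_{ac}\big(\sqrt{H_l^m}\big)=\Big[\sqrt{\tfrac{\ka_l}{\er_*\mu_*}},+\infty\Big),\quad \si_{ac}\big(\sqrt{H^0}\big)=[0,+\infty),
$$
while $\si_{sc}(\sqrt H)=\emptyset$ in every case.

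Third, I would assemble the orthogonal sum. Since the singular continuous subspace of a countable orthogonal sum is the sum of the singular continuous subspaces of the summands, and each of these is trivial, we obtain $\si_{sc}({\cal M})=\emptyset$. For the absolutely continuous part one has $\si_{ac}$ of an orthogonal sum equal to the closure of the union of the $\si_{ac}$ of the summands; but because $\la_1\le\la_2\le\cdots$ and $\ka_2\le\ka_3\le\cdots$, the relevant unions are nested half-lines, already closed, so no closure is needed. The electric summands contribute $[\sqrt{\la_1/(\er_*\mu_*)},+\infty)$ and the magnetic summands contribute $[\sqrt{\ka_2/(\er_*\mu_*)},+\infty)$ on the nonnegative half-axis, and the minus-square-root summands contribute the reflected half-lines on the negative half-axis. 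If $U$ is multiply connected, the terms $\pm\sqrt{H^0}$ add $[0,+\infty)$ and $(-\infty,0]$, whence $\si_{ac}({\cal M})=\R$. If $U$ is simply connected these terms are absent, and the inequality \eqref{Fr}, $\ka_2<\la_1$, gives $\sqrt{\ka_2/(\er_*\mu_*)}<\sqrt{\la_1/(\er_*\mu_*)}$, so the bottom of the nonnegative a.c. spectrum is $\sqrt{\ka_2/(\er_*\mu_*)}$; together with its reflection this yields the claimed two-ray set.

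The union bookkeeping is routine, and the absence of singular continuous spectrum follows formally once \eqref{73} is in hand. The one step that deserves genuine care is the transfer of the spectral type through $\sqrt{\,\cdot\,}$: the only delicate issue is the behaviour at the bottom of the spectrum, $t=0$, but since $0$ is a single point it can produce neither singular continuous spectrum nor any change in the stated half-lines, so the diffeomorphism argument on $(0,+\infty)$ suffices.
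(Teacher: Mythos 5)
Your proposal is correct and follows essentially the same route as the paper: the paper likewise deduces the corollary from the relation \eqref{73} (obtained via Theorem \ref{t51} and Lemma \ref{l53}), the inequality \eqref{Fr}, and the decomposition of ${\cal M}$ in Corollary \ref{c48}. The details you supply --- transferring spectral type through $t\mapsto\sqrt{t}$ and the union bookkeeping for the countable orthogonal sum --- are exactly the steps the paper leaves implicit, and you handle them correctly.
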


Let us study the point spectrum.

\begin{lemma}
Let the cross-section $U$ be multiply connected.
Let $\er$, $\mu$ satisfy \eqref{01}, \eqref{02} and \eqref{51}.
Then $\si_p (H^0) = \emptyset$, where the operator $H^0$ is defined in Theorem \ref{t47}.
\end{lemma}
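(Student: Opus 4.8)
The plan is to exploit the special form of the potential $V^0 = \eta^2 - \eta'$ (see \eqref{40}), which lets one \emph{complete the square} in the quadratic form of $H^0$. First I would record the identity, valid for every $u \in W_2^1(\R)$,
$$
\int_\R \left(|u'(y)|^2 + V^0(y)\,|u(y)|^2\right) dy = \int_\R |u'(y) + \eta(y)\,u(y)|^2 \, dy .
$$
To obtain it one substitutes $V^0 = \eta^2 - \eta'$, integrates the term $-\int_\R \eta' |u|^2$ by parts into $\int_\R \eta \left(u'\overline u + u\overline{u'}\right)$, and reassembles the perfect square. The integration by parts is legitimate because $\eta \in L_\infty(\R)$ by Lemma \ref{l44} and $u \in W_2^1(\R)$ decays at infinity. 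In particular the form is nonnegative, so $\si(H^0) \subset [0,+\infty)$ and $H^0$ has no negative eigenvalues.

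Next I would dispose of the positive spectrum. Since $U$ is multiply connected the operators $H^0$ genuinely occur in the decomposition \eqref{616}, so the statement is not vacuous. By Lemma \ref{l53} taken with $\la = 0$ we have $V^0 \in L_1(\R)$, hence Theorem \ref{t51} applies to $H^0 = -d^2/dy^2 + V^0$ and yields $\si_p(H^0) \cap (0,+\infty) = \emptyset$. Combined with the nonnegativity established above, the only value that could possibly be an eigenvalue is $0$.

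It remains to rule out a zero eigenvalue. Suppose $H^0 u = 0$ for some $u \in \dom H^0 \subset W_2^1(\R)$ with $u \neq 0$. Then, using $h[u] = \langle H^0 u, u\rangle = 0$ together with the square identity, I get $\int_\R |u' + \eta u|^2 dy = 0$, whence $u' + \eta u = 0$ almost everywhere. Recalling $\eta = \nu'/\nu$ from \eqref{426}, this first-order equation integrates explicitly to $u(y) = C\,\nu(0)/\nu(y)$ for a constant $C$. By the two-sided bounds \eqref{45} the function $\nu = \tilde\er^{1/4}\tilde\mu^{-1/4}$ satisfies $(\er_0/\mu_1)^{1/4} \le \nu(y) \le (\er_1/\mu_0)^{1/4}$, so $1/\nu$ is bounded away from zero; hence $u \notin L_2(\R)$ unless $C = 0$. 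This forces $u = 0$, a contradiction, so $0 \notin \si_p(H^0)$, and therefore $\si_p(H^0) = \emptyset$.

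The computations here are entirely routine; the single genuine idea, and the step I would be most careful with, is the square identity, since both the nonnegativity of $H^0$ and the explicit zero-energy solution rest on it. The only mild technical points are the justification of the integration by parts and of the equality $h[u] = \langle H^0 u, u\rangle$ on $\dom H^0$, both standard under the regularity $\eta \in L_\infty(\R)$ and $V^0 \in L_1(\R)$ already available.
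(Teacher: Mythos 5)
Your proposal is correct and is in substance the same argument as the paper's: the paper proves the lemma by citing \eqref{4687} (namely $\si_p(A^0)\subset(0,\infty)$, established in Section 5 from the nonnegativity of the form $\int_\R \er^{-1}|p'|^2\,dz$ and the fact that nonzero constants are not in $L_2$) together with \eqref{73} at $\la=0$ (i.e.\ Lemma \ref{l53} plus Theorem \ref{t51}), exactly the two ingredients you use. Your completing-the-square identity is precisely Lemma \ref{l46} with $\la=0$ rewritten in the $y$-variable (under $\cal U$ one has $h[u]=\int_\R \er^{-1}|p'|^2\,dz$ with $p=\nu u$), and your zero-mode computation $u'+\eta u=0 \Rightarrow u = C/\nu \notin L_2(\R)$ is the pullback of the paper's argument that $p'\equiv 0$ forces $p\equiv\const\equiv 0$; so you have merely unrolled the paper's citations into a self-contained derivation.
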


\begin{proof}
Follows from \eqref{4687} and \eqref{73} with $\la = 0$.
\end{proof}

The existence of the eigenvalues of the operators $H_k^{el}$ and $H_l^m$ depends on the properties of the coefficients.

\begin{lemma}
\label{l55}
Let $\er$, $\mu$ satisfy \eqref{01}, \eqref{02}, \eqref{51}
and moreover,
$$
\er(z) \mu(z) \le \er_* \mu_* \qquad \forall \ z \in \R.
$$
Then 
$$\si_p (H_k^{el}) = \si_p (H_l^m) = \emptyset,
$$
where these operators are defined in Theorem \ref{t47}.
\end{lemma}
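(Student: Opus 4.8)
The plan is to combine the $L_1$-perturbation result \eqref{73} with a factorization of the Schr\"odinger operators that exposes their spectral bottom. By the corollary following Lemma \ref{l53}, condition \eqref{51} gives $V_k^{el} - \frac{\la_k}{\er_*\mu_*} \in L_1(\R)$ and $V_l^m - \frac{\ka_l}{\er_*\mu_*} \in L_1(\R)$, so \eqref{73} already excludes eigenvalues strictly above the thresholds $\frac{\la_k}{\er_*\mu_*}$ and $\frac{\ka_l}{\er_*\mu_*}$. It therefore remains only to rule out eigenvalues lying at or below these thresholds, and this is precisely where the hypothesis $\er\mu \le \er_*\mu_*$, equivalently $\frac1{\tilde\er\tilde\mu} \ge \frac1{\er_*\mu_*}$, will be used.

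First I would record the factorization of the potentials. With $\eta = \nu'/\nu$ as in \eqref{426}, the same integration by parts as in the proof of Lemma \ref{l46} shows that for every $u \in W_2^1(\R)$
$$
\int_\R \left(|u'|^2 + (\eta^2 - \eta') |u|^2\right) dy = \int_\R |u' + \eta u|^2\, dy, \qquad
\int_\R \left(|u'|^2 + (\eta^2 + \eta') |u|^2\right) dy = \int_\R |u' - \eta u|^2\, dy .
$$
Hence, by \eqref{4e} and \eqref{4m}, the quadratic forms of $H_k^{el}$ and $H_l^m$ are
$$
h_k^{el}[u] = \int_\R |u' + \eta u|^2\, dy + \la_k \int_\R \frac{|u|^2}{\tilde\er\tilde\mu}\, dy, \qquad
h_l^m[u] = \int_\R |u' - \eta u|^2\, dy + \ka_l \int_\R \frac{|u|^2}{\tilde\er\tilde\mu}\, dy .
$$

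Next I would derive the lower bound and analyse the threshold. If $H_k^{el} u = E u$ with $u \neq 0$, then $h_k^{el}[u] = E \|u\|_{L_2}^2$, and since $\frac1{\tilde\er\tilde\mu} \ge \frac1{\er_*\mu_*}$ the displayed form gives
$$
\left(E - \frac{\la_k}{\er_*\mu_*}\right) \int_\R |u|^2\, dy = \int_\R |u' + \eta u|^2\, dy + \la_k \int_\R \left(\frac1{\tilde\er\tilde\mu} - \frac1{\er_*\mu_*}\right) |u|^2\, dy \ge 0 ,
$$
so $E \ge \frac{\la_k}{\er_*\mu_*}$. Combined with \eqref{73} this forces $E = \frac{\la_k}{\er_*\mu_*}$, whereupon both nonnegative terms on the right must vanish; in particular $u' + \eta u = 0$, i.e. $(\nu u)' = 0$ and $u = C/\nu$. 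But $\nu = \tilde\er^{1/4}\tilde\mu^{-1/4}$ is bounded above and below by positive constants by \eqref{45}, so $C/\nu \in L_2(\R)$ only when $C = 0$, contradicting $u \neq 0$. Thus $\si_p(H_k^{el}) = \emptyset$. The operator $H_l^m$ is handled identically: the threshold equation is now $u' - \eta u = 0$, giving $u = C\nu$, which again fails to lie in $L_2(\R)$ unless $C = 0$, so $\si_p(H_l^m) = \emptyset$.

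The main obstacle is the borderline case $E = \frac{\la_k}{\er_*\mu_*}$ (respectively $E = \frac{\ka_l}{\er_*\mu_*}$), which is exactly the bottom of the absolutely continuous spectrum. Here the generic $L_1$-perturbation theory underlying \eqref{73} gives no information, and one must exploit the precise factorized form together with the boundedness of $\nu$ from above and below to conclude that the only candidate threshold resonance function ($C/\nu$ or $C\nu$) is not square-integrable. The lower bound $E \ge \frac{\la_k}{\er_*\mu_*}$ itself is immediate from the factorization and the sign hypothesis on $\er\mu$, so the nontrivial content is entirely concentrated in excluding the threshold.
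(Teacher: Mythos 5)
Your proof is correct and follows essentially the same route as the paper: both arguments squeeze the point spectrum between the $L_1$-perturbation exclusion \eqref{73} (no eigenvalues strictly above $\la_k/(\er_*\mu_*)$, resp. $\ka_l/(\er_*\mu_*)$) and a quadratic-form lower bound that is strict at the threshold. The only difference is cosmetic: the paper observes that the hypothesis gives $\|\er\mu\|_{L_\infty}=\er_*\mu_*$ and then simply cites \eqref{4685} and \eqref{4686}, which were proved in Section 5 by working with the form of $A_k^{el}$ in the $z$-variable (a threshold eigenfunction would satisfy $p'=0$, hence be a nonzero constant, which is not in $L_2(\R)$), whereas you reprove that same exclusion in the $y$-picture by factorizing the form of $H_k^{el}$ as $\int_\R |u'+\eta u|^2\,dy+\la_k\int_\R |u|^2(\tilde\er\tilde\mu)^{-1}\,dy$, so that a threshold eigenfunction must be $C/\nu\notin L_2(\R)$. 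These two computations are exact transplants of one another under the unitary ${\cal U}$ of Lemmas \ref{l45}--\ref{l46} (indeed ${\cal U}$ maps constants to multiples of $1/\nu$), so nothing is gained or lost mathematically; the paper's proof is just shorter because the threshold work was already done in Section 5.
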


\begin{proof}
In this case 
$\|\er\mu\|_{L_\infty(\R)} = \er_* \mu_*$.
Now, the claim results from \eqref{4685}, \eqref{4686} and \eqref{73}.
\end{proof}

\begin{cor}
\label{c56}
Under the conditions of Lemma \ref{l55} the point spectrum of the Maxwell operator is empty, 
$\si_p ({\cal M}) = \emptyset$.
\end{cor}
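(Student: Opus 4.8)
The plan is to read off the point spectrum directly from the diagonalisation of ${\cal M}$ furnished by Corollary \ref{c48}. The hypotheses of Lemma \ref{l55} contain \eqref{01}, \eqref{02} and \eqref{51}, and as observed right after Lemma \ref{l52} the relations \eqref{51} imply \eqref{03}; hence the conditions of Theorem \ref{t47} are met and Corollary \ref{c48} applies. Thus ${\cal M}$ is unitarily equivalent to the orthogonal sum of the operators $\sqrt{H_k^{el}}$ and $-\sqrt{H_k^{el}}$ ($k\in\N$), of $\sqrt{H_l^m}$ and $-\sqrt{H_l^m}$ ($l\ge2$), together with $N-1$ copies each of $\sqrt{H^0}$ and $-\sqrt{H^0}$.

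I would then use two elementary facts. First, the point spectrum of an orthogonal sum of self-adjoint operators is the union of the point spectra of the summands: a nonzero eigenvector of the sum has a nonzero component in at least one block, and that component is an eigenvector of the corresponding operator for the same eigenvalue. Second, for a nonnegative self-adjoint $H$ one has $\si_p(\pm\sqrt{H}) = \{\pm\sqrt{\la} : \la \in \si_p(H)\}$, so $\si_p(H) = \emptyset$ forces $\si_p(\pm\sqrt{H}) = \emptyset$. The square roots are legitimate here because $H_k^{el}$ and $H_l^m$ are bounded below by positive constants (this follows from \eqref{468}, \eqref{4681} and the unitary equivalence of $A_k^{el}$, $A_l^m$ to $H_k^{el}$, $H_l^m$ established in Section 6) and $H^0$ is nonnegative. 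Consequently $\si_p({\cal M})$ is the union of the images of $\si_p(H_k^{el})$, $\si_p(H_l^m)$, and — when they occur — $\si_p(H^0)$ under the map $\la \mapsto \pm\sqrt{\la}$.

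Finally I would feed in the vanishing results already proved. Lemma \ref{l55} gives $\si_p(H_k^{el}) = \si_p(H_l^m) = \emptyset$ for every $k$ and $l$ under the present hypotheses. For the $H^0$ blocks there are two cases: if $U$ is simply connected then $N=1$ and these blocks are absent, while if $U$ is multiply connected then the (unlabelled) lemma immediately preceding Lemma \ref{l55} yields $\si_p(H^0)=\emptyset$. In both cases every summand in Corollary \ref{c48} has empty point spectrum, and therefore $\si_p({\cal M})=\emptyset$.

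There is no serious analytic obstacle remaining, since all the hard estimates reside in Lemma \ref{l55} and its predecessor. The only points demanding care are the topology-dependent bookkeeping of the $H^0$ blocks (present precisely when $U$ is multiply connected) and the elementary but essential observation that passing to $\pm\sqrt{\cdot}$ creates no new eigenvalues, so that the emptiness of the point spectra of the Schr\"odinger blocks transfers verbatim to ${\cal M}$.
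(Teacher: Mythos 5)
Your proof is correct and is exactly the argument the paper intends (the paper states Corollary \ref{c56} without proof, as an immediate consequence of Lemma \ref{l55}, the unlabelled lemma on $H^0$, and the decomposition of Corollary \ref{c48}). You have merely made explicit the two routine facts the paper leaves implicit: that the point spectrum of an orthogonal sum is the union of the point spectra of the blocks, and that $\pm\sqrt{\cdot}$ of a nonnegative operator creates no new eigenvalues.
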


\begin{lemma}
\label{l57}
Let $\er$, $\mu$ satisfy \eqref{01}, \eqref{02}, \eqref{51}
and moreover, there is $z_0 \in \R$ such that
$$
\er(z_0) \mu(z_0) > \er_* \mu_* .
$$
Then for $\la$ large enough
$$
\si_p(H) \cap \left(-\infty, \frac\la{\er_*\mu_*}\right) \neq \emptyset,
$$
where the operator $H$ is defined via \eqref{43}, \eqref{495}.
\end{lemma}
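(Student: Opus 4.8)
The plan is to produce the eigenvalue by a variational (min--max) argument, exploiting that the hypothesis creates a potential well whose depth is proportional to $\la$, while the rest of the potential stays bounded independently of $\la$. By Lemma \ref{l53} the function $V-\frac{\la}{\er_*\mu_*}$ lies in $L_1(\R)$, so it is a relatively compact perturbation of $-\frac{d^2}{dy^2}+\frac{\la}{\er_*\mu_*}$; hence Weyl's theorem gives $\si_{ess}(H)=\left[\frac{\la}{\er_*\mu_*},+\infty\right)$, in agreement with \eqref{73}. Consequently the spectrum of $H$ below $\frac{\la}{\er_*\mu_*}$ consists of isolated eigenvalues of finite multiplicity, and it suffices to exhibit one function $u\in W_2^1(\R)$ with
$$
h[u] < \frac{\la}{\er_*\mu_*}\int_\R |u(y)|^2\, dy ;
$$
the min--max principle then forces $\inf\si(H)<\frac{\la}{\er_*\mu_*}$, and this infimum, lying strictly below the essential spectrum, is a discrete eigenvalue.

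To build such a $u$ I would transport the hypothesis through the change of variable \eqref{41}, \eqref{42}. The product $\tilde\er\tilde\mu$ is continuous by Lemma \ref{l43}, and $\tilde\er(y_0)\tilde\mu(y_0)=\er(z_0)\mu(z_0)>\er_*\mu_*$ at $y_0=y(z_0)$, so there exist $\de>0$ and $c>0$ with
$$
\frac1{\tilde\er(y)\tilde\mu(y)}-\frac1{\er_*\mu_*}\le -c
\qquad\text{for } |y-y_0|<\de .
$$
I would then fix, once and for all (independently of $\la$), a smooth function $u$ supported in $(y_0-\de,y_0+\de)$ with $\int_\R|u|^2\,dy=1$.

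Recalling the definition \eqref{43} of $V$ and the form \eqref{495}, the left-hand side splits as
$$
h[u]-\frac{\la}{\er_*\mu_*}
=\int_\R|u'|^2\,dy+\int_\R\left(\eta^2-\eta'\right)|u|^2\,dy
+\la\int_\R\left(\frac1{\tilde\er\tilde\mu}-\frac1{\er_*\mu_*}\right)|u|^2\,dy .
$$
Since $\eta$ from \eqref{426} does not depend on $\la$, and Lemma \ref{l44} gives $\eta\in L_\infty(\R)$ together with $\int_{\supp u}|\eta'|<\infty$, the first two integrals are bounded by a constant $C_0$ independent of $\la$, while the last integral is at most $-c$ by the choice of support. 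Hence $h[u]-\frac{\la}{\er_*\mu_*}\le C_0-c\la$, which is negative once $\la$ is large enough, giving the desired strict inequality.

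The main point to be careful about --- though it is a mild one --- is the clean decoupling of the $\la$-dependence: the test function $u$, and therefore the bounded ``geometric'' part $\eta^2-\eta'$ of the potential, must be frozen before letting $\la\to\infty$, so that the linear-in-$\la$ gain from the well is not consumed by a growing kinetic term. This is ensured by keeping $\supp u$ fixed inside $(y_0-\de,y_0+\de)$ and using the $\la$-independent bounds on $\eta$ from Lemma \ref{l44}; no regularity beyond \eqref{51} is required.
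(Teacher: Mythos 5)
Your proposal is correct and follows essentially the same route as the paper: a fixed test function supported in the potential well near $y_0$, so that the $\la$-linear gain from $\frac1{\tilde\er\tilde\mu}-\frac1{\er_*\mu_*}\le -c$ beats the $\la$-independent kinetic and $\eta^2-\eta'$ contributions, forcing spectrum below $\frac\la{\er_*\mu_*}$. The only cosmetic difference is the final step, where you invoke Weyl's theorem on essential-spectrum stability under the $L_1$ perturbation, while the paper concludes directly from \eqref{73} that any spectrum below the threshold must be point spectrum; both are valid.
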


\begin{proof}
By assumption $\tilde\er(y_0) \tilde\mu(y_0) > \er_* \mu_*$,
where $y_0 = y(z_0)$.
By continuity, there are positive numbers $\de_1, \de_2 > 0$ such that
\begin{equation}
\label{53}
\frac1{\tilde\er(y) \tilde\mu(y)} \le \frac1{\er_*\mu_*} - \de_2 
\qquad \text{if} \quad |y-y_0| < 2 \de_1.
\end{equation}
Introduce the function $\ze \in W_2^1(\R)$,
\begin{equation}
\label{54}
\ze(y) = \begin{cases}
0, & y \le y_0 - 2 \de_1, \\
(y-y_0+2\de_1) \de_1^{-1}, & y_0 - 2 \de_1 < y < y_0 - \de_1, \\
1, & y_0 - \de_1 < y < y_0 + \de_1, \\
(y_0+2\de_1-y) \de_1^{-1}, &  y_0 + \de_1 < y < y_0 + 2\de_1, \\
0, & y\ge y_0 + 2\de_1 .
\end{cases}
\end{equation}
Then
$$
\int_\R\left( \frac1{\tilde\er(y) \tilde\mu(y)} - \frac1{\er_*\mu_*} \right) |\ze(y)|^2 dy
\le - \de_2 \int_\R |\ze(y)|^2 dy \le - 2 \de_1 \de_2 .
$$
Denote
\begin{equation}
\label{55}
\al : = \int_\R\left(|\ze'(y)|^2 + (\eta^2-\eta') |\ze(y)|^2\right) dy .
\end{equation}
We can estimate the value of the quadratic form \eqref{495} on the function $\ze$:
\begin{eqnarray*}
h[\ze] = 
\int_\R \left(|\ze'(y)|^2 + \left(\eta^2 - \eta' + \la \tilde\er^{-1} \tilde\mu^{-1}\right) \ze(y)|^2 \right) dy = 
\al + \la \int_\R \frac{|\ze(y)|^2 dy}{\tilde\er(y) \tilde\mu(y)}  \\
\le \al - 2\la \de_1 \de_2 + \frac\la{\er_*\mu_*} \int_\R |\ze(y)|^2 dy 
<  \frac\la{\er_*\mu_*} \int_\R |\ze(y)|^2 dy 
\end{eqnarray*}
if $\la > \al (2\de_1\de_2)^{-1}$.
For such $\la$, the last inequality and \eqref{73} guarantee the existence of eigenvalues
of the operator $H$ lesser than $\frac\la{\er_*\mu_*}$.
\end{proof}

\begin{cor}
\label{c58}
Under the conditions of Lemma \ref{l57}
the point spectrum of the Maxwell operator is infinite, $\# \si_p({\cal M}) = \infty$.
In particular, there are infinitely many eigenvalues on the continuous spectrum,
$$
\# \left(\si_p({\cal M}) \cap \si_{ac}({\cal M})\right)= \infty.
$$
\end{cor}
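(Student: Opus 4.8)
The plan is to read Lemma \ref{l57} as a statement about the operators $H_k^{el}$ of Theorem \ref{t47} and then transport the resulting eigenvalues to $\cal M$ through the decomposition of Corollary \ref{c48}. The key observation is that the operator $H=-d^2/dy^2+V$ of Lemma \ref{l57}, with $V$ given by \eqref{43} with parameter $\la$, coincides \emph{for} $\la=\la_k$ with $H_k^{el}=-d^2/dy^2+V_k^{el}$, since the potential $V_k^{el}$ in \eqref{4e} is precisely \eqref{43} with $\la=\la_k$. Because the Dirichlet eigenvalues satisfy $\la_k\to+\infty$, the hypothesis ``$\la$ large enough'' of Lemma \ref{l57} is fulfilled for all $k\ge k_0$ with some fixed $k_0$. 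Hence for every such $k$ the operator $H_k^{el}$ has at least one eigenvalue $E_k$ with $E_k<\la_k/(\er_*\mu_*)$.

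Next I would feed these eigenvalues into $\cal M$. By Corollary \ref{c48} the Maxwell operator is unitarily equivalent to an orthogonal sum containing the summands $\pm\sqrt{H_k^{el}}$, so each $E_k$ produces the two real numbers $\pm\sqrt{E_k}$ in $\si_p(\cal M)$; here $E_k>0$ because $H_k^{el}$ is unitarily equivalent to $A_k^{el}$ and $A_k^{el}\ge \la_k\|\er\mu\|_{L_\infty}^{-1}I>0$ by \eqref{468}, so the square roots are genuine. The same bound \eqref{468} gives $E_k\ge\la_k/\|\er\mu\|_{L_\infty}\to+\infty$, whence the values $E_k$ are unbounded and the eigenvalues $\pm\sqrt{E_k}$ are infinite in number. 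This proves $\#\si_p(\cal M)=\infty$.

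For the ``in particular'' assertion I would locate the $E_k$ relative to the absolutely continuous spectrum. By \eqref{73} combined with the decomposition of $\mathcal M^2$ underlying Theorem \ref{t47} and the inequality \eqref{Fr}, the set $\si_{ac}(\mathcal M^2)$ equals $[0,+\infty)$ when $U$ is multiply connected and $[\ka_2/(\er_*\mu_*),+\infty)$ when $U$ is simply connected. Since $E_k\ge\la_k/\|\er\mu\|_{L_\infty}\to+\infty$, for $k$ large enough $E_k$ exceeds this fixed threshold, so each such $E_k$ lies strictly inside $\si_{ac}(\mathcal M^2)$, i.e. is an embedded eigenvalue. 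Passing to square roots via Corollary \ref{c48} and comparing with the description of $\si_{ac}(\mathcal M)$ in Corollary \ref{c54}, one concludes $\pm\sqrt{E_k}\in\si_p(\mathcal M)\cap\si_{ac}(\mathcal M)$ for all large $k$, which gives the infinitely many embedded eigenvalues.

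The only delicate point, and the step I would treat most carefully, is this last localization: one must use the lower bound \eqref{468} \emph{together} with $\la_k\to+\infty$ to certify that the eigenvalues $E_k$ land \emph{above} the a.c.\ threshold (where they become embedded) rather than below it (where they would be ordinary discrete eigenvalues outside the continuous spectrum). Everything else is a direct assembly of Lemma \ref{l57}, Theorem \ref{t47}, and Corollaries \ref{c48} and \ref{c54}.
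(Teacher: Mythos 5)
Your proof is correct and follows essentially the same route as the paper: apply Lemma \ref{l57} with $\la=\la_k$ for all large $k$, transport the resulting eigenvalues of $H_k^{el}$ to ${\cal M}$ via Corollary \ref{c48}, and use the lower bound \eqref{468} together with $\la_k\to\infty$ to place those eigenvalues above the a.c.\ threshold given by Corollary \ref{c54}. The only differences are cosmetic: the paper additionally mentions the operators $H_l^m$ (not actually needed), while you are somewhat more explicit about why the eigenvalues are infinitely many and why they land inside $\si_{ac}({\cal M})$.
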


\begin{proof}
If $\la_k > \al (2\de_1\de_2)^{-1}$, where the numbers $\al, \de_1, \de_2$ are defined
by the formulas \eqref{53}, \eqref{54}, \eqref{55}, then the spectrum of the operator $H_k^{el}$
contains at least one eigenvalue.
The numbers $\la_k$ tend to infinity, so infinitely many operators $H_k^{el}$
has non-empty point spectrum.
The same is true for the operators $H_l^m$.
Now, the first claim follows from Corollary \ref{c48}.

If $\frac{\la_k}{\|\er\mu\|_{L_\infty}} \ge \frac{\ka_2}{\er_*\mu_*}$
then all the eigenvalues of $H_k^{el}$ are inside the absolute continuous spectrum of ${\cal M}^2$
by virtue of Corollary \ref{c54} and \eqref{468}.
Therefore, $\# \left(\si_p({\cal M}) \cap \si_{ac}({\cal M})\right)= \infty$.
\end{proof}

Theorem \ref{t15} follows from Corollaries \ref{c54}, \ref{c56} and \ref{c58}.

\section{Periodic coefficients}
We will use the following well-known result (see for example \cite{BES}).

\begin{theorem}
\label{t61}
Let $W$ be a real-valued periodic function,
$$
W \in L_{1,loc}(\R), \qquad W(y+b) = W(y).
$$
Let $H = -\frac{d^2}{dy^2} + W(y)$ be the self-adjoint operator in $L_2(\R)$
corresponding to the quadratic form
$$
h[u] = \int_\R\left(|u'(y)|^2 + W(y) |u(y)|^2\right) dy, 
\qquad \dom h = W_2^1(\R).
$$
Then

1) the spectrum of the operator is absolutely continuous,
$$
\si(H) = \si_{ac} (H), \qquad \si_{sc}(H) = \emptyset, \qquad \si_p (H) = \emptyset .
$$

2) The spectrum has a band-gap structure,
$$
\si(H) = \bigcup_{n=1}^\infty [\al_n, \be_n], \qquad \be_n \le \al_{n+1} .
$$
Here $\al_n$ (resp. $\be_n$) is the $n$-th eigenvalue of the operator $-\frac{d^2}{dy^2} + W(y)$
in $L_2(0,b)$ with periodic (resp. semiperiodic) boundary conditions if the number $n$ is odd,
and conversely, $\al_n$ (resp. $\be_n$) is the $n$-th eigenvalue of the operator $-\frac{d^2}{dy^2} + W(y)$
in $L_2(0,b)$ with semiperiodic (resp. periodic) boundary conditions if the number $n$ is even.
Moreover,
\begin{eqnarray*}
\al_n = \frac{\pi^2(n-1)^2}{b^2} + \frac1b \int_0^b W(y)\, dy + o(1), \quad n \to \infty,\\
\be_n = \frac{\pi^2 n^2}{b^2} + \frac1b \int_0^b W(y)\, dy + o(1), \quad n \to \infty.
\end{eqnarray*}
Some gaps can vanish (i.e. $\be_n = \al_{n+1}$ for some $n$), but in any cases
the lengths of gaps tend to zero, $\al_{n+1} - \be_n \to 0$.
\end{theorem}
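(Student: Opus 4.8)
The plan is to invoke Floquet--Bloch theory. First I would pass to the direct integral decomposition over the quasimomentum. For $u \in L_2(\R)$ set $({\cal G}u)(\te,y) = \sum_{n\in\Z} e^{-i\te n b}\, u(y+nb)$; this is a unitary map of $L_2(\R)$ onto $\int_{[0,2\pi/b)}^\oplus L_2(0,b)\, d\te$ that carries $H$ into the direct integral $\int^\oplus H(\te)\, d\te$, where $H(\te) = -d^2/dy^2 + W(y)$ acts with the quasiperiodic boundary conditions $u(b) = e^{i\te b}u(0)$, $u'(b) = e^{i\te b}u'(0)$. Under the sole hypothesis $W \in L_{1,loc}(\R)$ each fiber is a regular Sturm--Liouville operator on the bounded interval $(0,b)$, hence has compact resolvent and a discrete spectrum; I order its eigenvalues as $E_1(\te) \le E_2(\te) \le \dots$. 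The spectrum of $H$ is then $\si(H) = \bigcup_n E_n([0,2\pi/b))$, and each band $[\al_n,\be_n]$ is the range of a single branch $E_n(\cdot)$.

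For part~1 the whole statement reduces to showing that every band function $E_n$ is a non-constant, piecewise real-analytic function of $\te$. The tool is the transfer (monodromy) matrix $M(E)$, sending $(u(0),u'(0))$ to $(u(b),u'(b))$ along solutions of $-u'' + Wu = Eu$. Since $W$ is real, the Wronskian is conserved, so $\det M(E) \equiv 1$, and the Hill discriminant $D(E) := \tr M(E)$ is entire in $E$ and real on $\R$. A value $E$ belongs to $\si(H)$ precisely when the eigenvalues of $M(E)$ lie on the unit circle, i.e. $|D(E)| \le 2$, the Floquet relation being $2\cos(\te b) = D(E)$. I would then verify that $D$ is strictly monotone in the interior of each spectral band; this makes the correspondence $E \leftrightarrow \te$ strictly monotone there, so each $E_n(\te)$ is strictly monotone on $(0,\pi/b)$ and in particular non-constant. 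A non-constant analytic band function contributes only absolutely continuous spectrum to the direct integral and no eigenvalues, since a flat piece of a band is exactly what would produce an $L_2$ eigenfunction. Hence $\si_p(H) = \emptyset$, $\si_{sc}(H) = \emptyset$ and $\si(H) = \si_{ac}(H)$.

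For part~2 the band edges are the points with $|D(E)| = 2$: $D(E) = 2$ at the periodic eigenvalues (the fiber $\te = 0$, where $e^{i\te b} = 1$) and $D(E) = -2$ at the semiperiodic ones (the fiber $\te = \pi/b$, where $e^{i\te b} = -1$). Oscillation theory for these two self-adjoint problems on $(0,b)$ gives the classical interlacing of their eigenvalues, $\la_0 < \mu_0 \le \mu_1 < \la_1 \le \la_2 < \mu_2 \le \mu_3 < \la_3 \le \dots$, where $\la$ denotes periodic and $\mu$ semiperiodic eigenvalues. Reading off the bands $[\la_0,\mu_0]$, $[\mu_1,\la_1]$, $[\la_2,\mu_2],\dots$ reproduces exactly the labelling claimed: an odd band opens at a periodic edge and closes at a semiperiodic one, an even band conversely, and a closed gap $\be_n = \al_{n+1}$ corresponds to a double eigenvalue of one of the two problems.

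The asymptotics come from Weyl-type eigenvalue asymptotics for the periodic and semiperiodic problems on $(0,b)$. When $W \equiv 0$ the periodic eigenvalues are $(2\pi k/b)^2$ and the semiperiodic ones $((2k-1)\pi/b)^2$; assembling these into the single increasing sequence of band edges gives the leading terms $\pi^2(n-1)^2/b^2$ for $\al_n$ and $\pi^2 n^2/b^2$ for $\be_n$. First-order perturbation theory then supplies the averaged shift $\frac1b\int_0^b W(y)\,dy$, with $o(1)$ remainder, and the same computation shows that the two edges sitting at each unperturbed double level split by an amount controlled by a Fourier coefficient of $W$, which tends to $0$; thus $\al_{n+1} - \be_n \to 0$. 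I expect the main obstacle to be the absolute-continuity step, namely rigorously excluding flat bands: everything there rests on the strict monotonicity of $D$ inside each band (equivalently, $D'(E) \ne 0$ in the band interiors and transversal crossings of the levels $\pm 2$ at generic edges). This is the one genuinely analytic input and precisely what has no higher-dimensional analogue; the remaining steps are bookkeeping with the discriminant together with standard Sturm--Liouville asymptotics.
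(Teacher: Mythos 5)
The paper does not actually prove Theorem \ref{t61}: it is imported as a well-known result with a citation to the monograph of Brown, Eastham and Schmidt \cite{BES}, so there is no internal proof to compare against. Your outline is precisely the classical argument contained in that reference: the Gelfand (Floquet--Bloch) fibration over the quasimomentum, reduction to the Hill discriminant $D(E)=\tr M(E)$ with $\det M(E)=1$, the band description $\si(H)=\l\{E : |D(E)|\le 2\r\}$ via the relation $2\cos(\te b)=D(E)$, the interlacing of periodic and semiperiodic eigenvalues for the band-edge labelling, and Sturm--Liouville asymptotics for the $o(1)$ statements. The outline is sound, and you correctly isolate the one genuinely analytic input: strict monotonicity of $D$ inside the bands, i.e.\ $D'(E)\neq 0$ whenever $|D(E)|<2$, which excludes flat bands and hence both point and singular continuous spectrum. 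Two steps are asserted rather than proved, and both are standard: that monotonicity (classically established via the identity expressing $D'(E)$ as an integral of products of the fundamental solutions, whose sign is controlled when $|D(E)|<2$), and the band-edge asymptotics for a potential that is merely $L_{1,loc}$ --- here first-order perturbation theory is only a heuristic, since the unperturbed levels are double and the potential need not be small; the rigorous route is the estimate $D(E)=2\cos\l(b\sqrt{E}\r)+O\l(E^{-1/2}\r)$ for the discriminant of an $L_1$ potential, from which both displayed formulas and $\al_{n+1}-\be_n\to 0$ follow at once. With these classical facts filled in (all available in \cite{BES}), your plan is a complete and correct proof of exactly the result the paper quotes.
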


{\it Proof of Theorem \ref{t16}.}
Coefficients $\er$, $\mu$ satisfy the conditions \eqref{01}, \eqref{02}, \eqref{03} and \eqref{04}.
The periodicity \eqref{04} implies that the function $y$ defined by \eqref{41} possess the property
$$
y(z+a)  = y(z) + \int_0^a \sqrt{\er(s)\mu(s)} ds.
$$
Therefore, the functions $\tilde\er$, $\tilde\mu$ are also periodic,
$$
\tilde\er(y+b) = \tilde\er(y), \quad \tilde\mu(y+b) = \tilde\mu(y),
\qquad \text{with} \quad b = \int_0^a \sqrt{\er(s)\mu(s)} ds.
$$
Moreover, the functions $\nu$, $\eta$ and $V$ defined by \eqref{423}, \eqref{426} and \eqref{43} 
are also periodic, and
$$
V_k^{el} (y+b) = V_k^{el} (y), \qquad V_l^m(y+b) = V_l^m(y), 
\qquad V^0(y+b) = V^0 (y).
$$
Thus, all the operators $H_k^{el}$, $H_l^m$, $H^0$ in Theorem \ref{t47} and Corollary \ref{c48} 
satisfy the conditions of Theorem \ref{t61}.
So, the spectra of the operators ${\cal M}^2$ and ${\cal M}$ are absolutely continuous.

Let us prove that the spectrum of ${\cal M}$ contains two symmetric semiaxes.
It is sufficient to prove that the set
$\si(H_1^{el}) \cup \si(H_2^{el})$ contains a semi-axes.
Recall that
$$
H_k^{el} =  -\frac{d^2}{dy^2} + V_k^{el} (y),
\qquad
V_k^{el} (y) = \eta(y)^2 - \eta'(y) + \la_k \tilde\er(y)^{-1} \tilde\mu(y)^{-1}.
$$
Denote by $w_k$ the mean value of the potential,
$$
w_k : = \frac1b \int_0^b V_k^{el} (y)\, dy.
$$
Clearly, $w_2 > w_1$ due to the inequality $\la_2 > \la_1$.
(Of course, we do not need to use just the inequality $\la_2 > \la_1$;
one could take any two different eigenvlaues $\la_j >\la_i$.)
Denote by $\al_n^{(k)}$, $\be_n^{(k)}$ the edges of the gaps in the spectra of the operators $H_k^{el}$,
$k=1,2$.
Let $0 < \de < (w_2-w_1)/2$.
Fix a natural number $N$ such that
\begin{equation}
\label{61}
(2N+1) \pi^2 > (w_2-w_1+2\de) b^2
\end{equation}
and
$$
\left|\al_{n+1}^{(k)} - \pi^2 n^2 b^{-2} - w_k\right| < \de,
\quad \left|\be_n^{(k)} - \pi^2 n^2 b^{-2} - w_k\right| < \de \qquad \text{if} \ \ k = 1,2, \ \ n \ge N.
$$
We show that 
\begin{equation}
\label{62}
[K, \infty) \subset \si(H_1^{el}) \cup \si(H_2^{el}),
\end{equation}
where
$$
K = \pi^2 N^2 b^{-2} + w_2 + \de .
$$
Indeed, if $\la \ge K$ and $\la \notin \si(H_1^{el})$ then
$$
\left|\la - \pi^2 n^2 b^{-2} - w_1\right| < \de \qquad \text{for some} \ \ n \ge N.
$$
If $\la \ge K$ and $\la \notin \si(H_2^{el})$ then
$$
\left|\la - \pi^2 m^2 b^{-2} - w_2\right| < \de \qquad \text{for some} \ \ m \ge N.
$$
Then
\begin{equation}
\label{63} 
\left|\pi^2 n^2 b^{-2} - \pi^2 m^2 b^{-2} + w_1 - w_2\right| < 2 \de
\end{equation}
and
$$
\pi^2 \left|n^2-m^2\right| b^{-2} < w_2 - w_1 + 2\de .
$$
The last inequality together with \eqref{61} yield $n=m$.
But the inequality \eqref{63} can not be valid if $n=m$ by the choice of $\de$.
We get a contradiction, so \eqref{62} is proven.
Thus,
$$
[K, \infty) \subset \si ({\cal M}^2) .
$$

If the number $\la_k$ (resp. $\ka_l$) is large enough then there are no spectrum of the operator $H_k^{el}$
(resp. $H_l^m$) below the point $K$, see \eqref{468} (resp. \eqref{4681}).
The sequences $\la_k$ and $\ka_l$ tend to infinity.
Therefore, by virtue of Theorem \ref{t47} the set of edges of gaps in the spectrum of ${\cal M}^2$ is finite.
The same is true for the operator ${\cal M}$.
\qed

\vskip1cm
St. Petersburg Department of Steklov Institute of Mathematics, 
191023, 27 Fontanka, St. Petersburg, 
and

St. Petersburg State University,
199034, Universitetskaya emb. 7/9, St.Petersburg, Russia.


\end{document}